\newcommand\Tref[1]{{Theorem~\ref{#1}}}
\newcommand\Lref[1]{{Lemma~\ref{#1}}}
\newcommand\Cref[1]{{Corollary~\ref{#1}}}
\newtheorem{thm}{Theorem}[section] 
\newtheorem{cor}[thm]{Corollary}
\newtheorem{defn}[thm]{Definition}
\newtheorem{exmpl}[thm]{Example}
\newtheorem{lem}[thm]{Lemma}
\newtheorem{prop}[thm]{Proposition}
\newtheorem{rem}[thm]{Remark}
\newtheorem{notation}[thm]{Notation}
\newcommand{\etype}[1]{\renewcommand{\labelenumi}{(#1{enumi})}}
\def\eroman{\etype{\roman}}
\newcommand\Dref[1]{{Definition~\ref{#1}}}
\newcommand\Rref[1]{{Remark~\ref{#1}}}
\newcommand\M[1][d]{{\operatorname{M}_{#1}}} 
\def\CI{\mathcal I}
\def\CJ{\mathcal J}
\def\({\left(}
\def\){\right)}
\def\bM{\overline{\mathcal M}}
\def\MG{\mathcal M}
\def\sgn{\operatorname{sgn}}
\newcommand{\smat}[4]{{\(\!\!\begin{array}{cc}{#1}\!&\!{#2}\\[-0.1cm]{#3}\!&\!{#4}\end{array}\!\!\)}} 
\def\id{\operatorname{id}}
\def\tr{\operatorname{tr}}
\def\a{{\alpha}}
\def\q{Q}
\def\alphajk{\a_{\operatorname{mat},k}}
\def\aqpol{{\operatorname{\alpha}_{\operatorname{pol}}}^{\bar q}}
\def\la{{\lambda}}
\def\Ann{\operatorname{Ann}}
\newcommand{\set}[1]{{\left\{#1\right\}}}
\newcommand\eq[1]{{(\ref{#1})}}
\newcommand{\card}[1]{{\left|{#1}\right|}}
\def\cha{\operatorname{char}}
\def\II{{I\!\!\,I}}
\def\N {{\mathbb {N}}}
\DeclareMathOperator{\mychar}{char} %
\long\def\forget#1\forgotten{}
\newcommand\suchthat{{\,:\ \,}}
\newcommand\comp[3][\bullet]{{{#1}_{{\if1#2{}\else{#2}\fi}{\if#3K{}\else{(#3)}\fi}}}} 
\newif\ifXY 
\ifXY \usepackage{xy}\fi %
\ifXY \xyoption{matrix}\xyoption{arrow}\xyoption{curve} \fi
\def\Zcd{{Zariski closed}}
\def\Zcr{{Zariski closure}}
\begin{document}

\title[Representability of relatively free affine algebras]
{Representability of relatively free affine algebras over a Noetherian ring}

\author{Alexei Kanel-Belov$^*$}
\author{Louis Rowen$^*$}
\author{Uzi Vishne$^*$}

\address{$^{*}$ Department of Mathematics, Bar-Ilan University, Ramat-Gan
52900,Israel} %
\email{belova@math.biu.ac.il, rowen@math.biu.ac.il,
vishne@math.biu.ac.il}

\thanks{This research was supported by the Israel Science Foundation [grant number 1994/20].}

\begin{abstract} Over the years questions have arisen about T-ideals of (noncommutative) polynomials.
But when evaluating a noncentral polynomial in subalgebras of matrices, one often has
little control in determining the specific evaluations of the polynomial. One way of
overcoming this difficulty in characteristic 0, is to reduce to multilinear polynomials and utilizing the representation
theory of the symmetric group.  But
this technique is unavailable in characteristic $p>0$.

An alternative method, which succeeds, is the process of ``hiking'' a polynomial,
 in which one specializes its indeterminates in several stages, to obtain a polynomial that contains
  Capelli polynomials,   in order to get control on its evaluations. This
method was utilized on homogeneous polynomials in the proof of Specht's conjecture for affine algebras over fields of positive
characteristic.

In this paper we develop hiking further to nonhomogeneous polynomials, to apply to the representability question.
 Kemer proved in 1988  that every affine relatively free PI algebra
over an infinite field, is representable. In 2010, the first author
of this paper proved more generally that every affine relatively
free PI algebra over any commutative Noetherian unital ring is
representable. We present a different, complete,
proof, based on   hiking nonhomogeneous polynomials, over finite fields. We
then obtain the full result over a Noetherian commutative ring, using Noetherian induction on T-ideals.


The bulk of the proof is for the case of a  base field of positive
characteristic. Here, whereas the usage of hiking is more direct than in proving Specht's conjecture,
one must consider nonhomogeneous polynomials when the base ring is
finite, which entails certain difficulties to be overcome.

In the appendix we show how hiking can be adapted to prove the involutory versions, as well as  various graded  and
nonassociative theorems.
\end{abstract}

\keywords{Polynomial identity, relatively free, representable,
T-ideal, hiking}

\subjclass[2010]{Primary: 16R10, 16R40, 16W10;
 Secondary: 16G20, 17B50, 17C05}
\maketitle

\setcounter{tocdepth}{3}
\tableofcontents

\newcommand\AR[1]{{\begin{matrix}#1\end{matrix}}}

\section{Introduction}

In this paper, starting with the hiking technique from \cite{BRV5}, we give a full
exposition of Theorem~\ref{4.66},  that relatively free affine
PI-algebras over an arbitrary commutative Noetherian ring are
representable. The crux of the matter is for algebras over a finite field.
To prove that an affine algebra $A$ is representable, it is enough to embed it into an algebra that is a finite
module over a commutative Noetherian base ring. (Anan'in \cite{An} proved a far more general result, extended even further in
\cite{RowenSmall}.)

Our approach in this representability theorem follows the Shirshov program of \cite[Section~2.4]{BKR}
If the algebra $A$ is integral over its center, then Shirshov's celebrated theorem \cite[Theorem~2.2.2]{BKR} shows that $A$  is finite and then representable.
There is a celebrated method, due to Razmyslov and Schelter, of adjoining characteristic values of the elements of a prime PI-algebra $A$ in order to make it integral,
and the challenge in the proof is to adjoin characteristic values to arbitrary PI-algebras, by means of a polynomial.
(In the prime PI-case, this is the Capelli polynomial of \cite[Definition~1.20]{BKR}.) This might seem circular,
since one obtains characteristic values in matrices, and we want to prove that the algebra is representable; actually we take a maximal representable
T-ideal $\mathcal I$ of $A$, and compute in $A/\mathcal I$. Then we ``hike'' a given identity of $A$ which is not an identity of $A/\mathcal I$ until it sufficiently resembles
a Capelli polynomial that we can adjoin the   characteristic values and obtain our finite module.
The hiking process is summarized in the Canonization Theorem for Polynomials (Theorem~\ref{hikthm}):

Suppose $f(x_1, \dots, x_\ell) $ is a 
 nonidentity of
$\widetilde {A_0}$. Then the T-ideal of~$f$ contains   a critical
non-identity of $\widetilde {A_0}$  (defined in \Dref{doccr}, which involves Capelli-like polynomials).

This idea first appeared in \cite{B2}, which provides the
basis for our proof, but  several aspects remained to be worked out.
Here are the key ingredients in this paper:

\begin{itemize}
\item Description of ``full quivers'' of algebras, which involves collecting ``Canonization theorems'' that
 provide quivers with special properties, in \S\ref{canonth}. Some of the more esoteric sorts
of quivers in \cite{BRV5} can be avoided, since, having already
verified Specht's conjecture (in the affine case) we have more
control over the given T-ideal.

\item The main tool in utilizing the combinatorics of
polynomials is ``hiking,'' to provide special properties of a
polynomial in a given T-ideal.
 The
discussion here of hiking is the main contribution of this paper.
The procedure here is more involved than in \cite{BRV5} since it
involves nonhomogeneous polynomials, cf.~Theorem~\ref{hikthm7}, and is described below in
several stages.

\item  Hiking is coordinated with Noetherian induction on T-ideals,
in the sense that one takes a minimal non-representable
counterexample, and then reduces it further to prove that it is
representable after all. This is to be done by with the help of Shirshov's theorem, by means of adjoining characteristic
coefficients of matrices. But this further reduction is quite
delicate, due to ambiguity in defining the characteristic
coefficients.

On the one hand we want to use a famous trick of
Amitsur (Lemma~\ref{q1}) to extract the characteristic coefficients
from an alternating polynomial $f$, but when our polynomial $f$ is
nonhomogeneous and involves alternating components of different
multiplicities, we cannot apply Lemma~\ref{q1} directly, and
need first to hike $f$, cf.~\S\ref{trab}. It is
convenient to tie them by a different matrix method of calculating
characteristic coefficients, given in Definition~\ref{trmat0}.

\item  In order to identify the matrix action with polynomials, we must
utilize a Noetherian module rather than an algebra, as explained in \S~\ref{stillfield},  and modding out
by this module enables us to apply Spechtian induction.

  \item  The reduction from the Noetherian case to the field case is a
relatively straightforward application of classical Noetherian
induction, included for completeness in \S\ref{tors}.
\end{itemize}

Further applications should be possible in
varied settings, including the proof of an involutory version as well as  various graded theorems and
nonassociative theorems, cf. the appendix (\S\ref{invo}).

 We work with algebras over a commutative Noetherian ring $C$, often a field $F$, with special emphasis on
the possibility that $F$ is finite. $\bar F$ denotes the algebraic
closure of $F$. A finitely generated algebra is called
\textbf{affine}. A~\textbf{(noncommutative) polynomial} is an
element of the free associative algebra~$C\{ x\}$ on countably many
generators. A~\textbf{polynomial identity} (PI) of an algebra~$A$
over $C$ is a noncommutative polynomial  which vanishes identically
for any substitution in~$A$. We use \cite{Row0}, \cite{BKR} as a general reference
for~PIs. A~T-\textbf{ideal}  of $C\{ x\}$  is an ideal $\CI$ of $C\{
x\}$ closed under all algebra endomorphisms $C\{ x\}\to C\{ x\}$. We
write $\id (A)$ for the T-ideal of PIs of an algebra~$A$.

 Conversely, for any T-ideal $\CI$ of $C\{ x\}$, each element of
$\CI$ is a PI of the algebra  $C\{ x\}/\CI$, and $C\{ x\}/\CI$ is
\textbf{relatively free}, in the sense that for any PI-algebra $A$
with $\id(A) \supseteq \CI,$ and any $a_1, a_2,\ldots\in A,$ there
is a natural homomorphism $C\{ x\}/\CI \to A$ sending $x_i \mapsto
a_i$ for $i = 1,2, \dots .$

When $A = C\{ x\}/\CI$ is relatively free, and $\overline{\CJ} = \CJ
/ \CI$ for a T-ideal $\CJ \supset  \CI$  of $F\{ x\}$, we also call
$\overline{\CJ} $ a T-ideal of $A$. ($\overline{\CJ}$ is invariant
under all endomorphisms of $A$.)


\subsection{Representability}$ $

We mostly follow \cite[\S 1.6]{BKR} and \cite{RowenSmall}. An
algebra $A$ over a field $F$ is called {\bf representable} if it is
embeddable as an $F$-subalgebra of $\M[n](K)$ for a suitable field
$K \supseteq F$.

An algebra $A$ over a commutative ring $C$ is called {\bf weakly
representable} if it is embeddable as a $C$-subalgebra of a finite dimensional
algebra over a commutative Noetherian $C$-algebra $K$. This
definition is weaker than \cite[Definition 1.6.1]{BKR} in order to
avoid {Bergman's} example  \cite{Bergman} of a finite ring not
embeddable into matrices over a commutative ring; any finite ring is
weakly representable in our sense.

Obviously any representable algebra is
weakly representable. On the other hand, by~\cite{RowenSmall} any
Noetherian algebra over a field which is finite over its center is
representable, so ``representable'' and ``weakly representable''
coincide for algebras over a field.

Any weakly representable algebra is PI, but an easy counting
argument of Lewin~\cite{Lew} leads to the existence of
non-representable affine PI-algebras over any field.

Nevertheless, the representability question for relatively free
affine algebras has considerable independent interest, and  the
purpose of this paper is to give a full proof of the following
results:

\begin{thm}\label{4.66}
Every relatively free affine PI-algebra over an arbitrary  field is
representable,
\end{thm}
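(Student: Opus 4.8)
The plan is to argue by Noetherian induction on T-ideals. Once Specht's conjecture is known in the affine case, the T-ideals of $F\{x\}$ lying above a fixed one satisfy the ascending chain condition; so, reasoning by contradiction, we may fix a field $F$ and a T-ideal $\CI$ such that the relatively free algebra $A:=F\{x\}/\CI$ is not representable, while $F\{x'\}/\CJ$ \emph{is} representable for every finite variable set $x'$ and every T-ideal $\CJ \supset \CI$. If $F$ is infinite this already contradicts Kemer's theorem, so we may assume $F$ is finite of characteristic $p>0$ --- precisely the case where the extra difficulties (nonhomogeneous polynomials, alternating components of differing multiplicities) arise. By the easy classical reduction collected in \S\ref{tors}, it is enough to treat base fields rather than general commutative Noetherian rings, so we lose nothing by staying over $F$.

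The next step is to apply the structure theory of full quivers from \S\ref{canonth}: embed $A$ into a subalgebra of a product of generic-matrix algebras attached to the vertices of a full quiver $Q$, and use the canonization theorems to normalize $Q$ so that its gluings and its data along the arrows have the special form needed below. Because of the maximality of $\CI$, any consequence we are entitled to impose beyond the identities of $A$ forces us into an already-representable quotient; hence it will suffice to \emph{adjoin central data} to $A$ --- the characteristic coefficients of the relevant generic matrices --- without altering $\id(A)$.

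The heart of the argument is hiking combined with the extraction of those characteristic coefficients. Using \Tref{hikthm7} I would hike a suitable consequence of the identities of $A$ to a polynomial $f$ that is strongly alternating in several sets of variables, the alternating sets being large enough --- via Shirshov's theorem applied to the affine algebra $A$ --- to detect all the characteristic coefficients of the pertinent $n\times n$ generic matrices. Over the finite field $F$, $f$ must be taken nonhomogeneous, and its alternating components carry different multiplicities; Amitsur's trick, \Lref{q1}, recovers the characteristic coefficients from the top alternating component, but the varying multiplicities prevent a uniform application, so I would instead compute all these coefficients coherently by the matrix method of \Dref{trmat0} and then \emph{match} the two computations. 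Reconciling the Amitsur action with the \Dref{trmat0} action is, I expect, the main obstacle --- it is exactly the ambiguity in defining the characteristic coefficients produced by a nonhomogeneous alternating polynomial.

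To make that matching precise one is forced out of the category of algebras and into modules, as foreshadowed in the introduction: I would introduce a Noetherian $A$-module $M$ on which the two actions of the characteristic coefficients are to be identified, adjoin these coefficients to obtain an overalgebra $\widehat A$ that is finite over a commutative Noetherian center --- hence representable by \cite{RowenSmall} --- and observe that the discrepancy between the two actions lives in a relatively free quotient of $A$ cut out by a T-ideal strictly larger than $\CI$, which is representable by the inductive hypothesis. Modding out by the annihilator of $M$ and splicing this quotient's representation with that of $\widehat A$ --- using the Noetherianity of $M$ to keep everything finite over a commutative Noetherian ring --- yields a faithful representation of $A$, contradicting the choice of $\CI$ and establishing \Tref{4.66}.
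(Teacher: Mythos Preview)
Your overall strategy matches the paper's --- Specht induction, full quivers, hiking to extract characteristic coefficients, reconciling the two actions via a Noetherian module, and the Phoenix-type splicing argument --- but there is a genuine gap in the setup. You propose to ``embed $A$ into a subalgebra of a product of generic-matrix algebras attached to the vertices of a full quiver $Q$'' and then hike ``a suitable consequence of the identities of $A$.'' However, $A$ is by hypothesis \emph{not} representable, so there is no such embedding and no full quiver for $A$; the full-quiver and hiking machinery require an ambient matrix algebra in which to compute. As written, phrases like ``the relevant generic matrices'' and ``the full quiver $Q$'' have no referent.

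The paper resolves this with a second algebra: by Lewin's theorem $\CI$ contains a representable T-ideal, and by the ACC there is a \emph{maximal representable} T-ideal $\CI_0 \subsetneq \CI$. One sets $A_0 = F\{x\}/\CI_0$, which \emph{is} representable, takes its Zariski closure $\widetilde{A_0} \subset \M[n](K)$, and works with the full quiver of $\widetilde{A_0}$. The polynomial $f$ to be hiked lies in $\CI \setminus \CI_0$ --- an identity of $A$ but a \emph{non-identity} of $A_0$ --- and all quasi-linearization, hiking, and characteristic-coefficient computations take place inside $\widetilde{A_0}$. The aim is to show that the T-ideal generated by $\CI_0$ together with the hiked $f$ is representable, contradicting the maximality of $\CI_0$. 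Once you insert $A_0$ and reframe the hiking as producing non-identities of $\widetilde{A_0}$ inside $\CI$, the remainder of your outline (matching the Amitsur-style action with the matrix action of Definition~\ref{trmat0} on a Noetherian module, then the final injection into a direct sum of representable pieces) tracks the paper closely.
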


and, more generally,

\begin{thm}\label{4.67}
Every relatively free affine PI-algebra over an arbitrary
commutative Noetherian ring is  weakly representable.
\end{thm}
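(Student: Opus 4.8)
The plan is to deduce \Tref{4.67} from \Tref{4.66} (whose proof is the substance of the paper) by the promised Noetherian induction on T-ideals, the engine being the affine Specht theorem from our Transactions paper: every T-ideal of $C\{x_1,\dots,x_m\}$ is finitely generated as a T-ideal, which will let us ``clear denominators'' at a single element of $C$. Fix $m$ and argue by induction on pairs $(C,\CI)$, where $C$ is commutative Noetherian and $\CI$ is a T-ideal of $C\{x_1,\dots,x_m\}$, declaring $(C',\CI')\prec(C,\CI)$ when either $C'$ is a proper quotient of $C$ with $\CI'$ the image of $\CI$, or $C'=C$ and $\CI'\supsetneq\CI$. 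This order is well founded: along a descending chain the kernel in $C$ grows and stabilizes by the ascending chain condition on ideals of $C$, and thereafter the T-ideals strictly ascend, which stabilizes since finite generation of T-ideals gives ACC for T-ideals. Writing $A=C\{x_1,\dots,x_m\}/\CI$, we may thus assume inductively that (a) $A/\mathfrak bA$ is weakly representable for every nonzero ideal $\mathfrak b$ of $C$, and (b) $C\{x_1,\dots,x_m\}/\CI^\ast$ is weakly representable for every T-ideal $\CI^\ast\supsetneq\CI$. Applying (a) with $\mathfrak b=\Ann_C(A)$, we may also assume $\Ann_C(A)=0$.

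First I would reduce to the case that $C$ is reduced. If the nilradical $N$ satisfies $N^\ell=0\neq N^{\ell-1}$ with $\ell\geq 2$, then $B:=N^{\ell-1}A$ is nonzero (because $\Ann_C(A)=0$), is a square-zero ideal of $A$ (since $(N^{\ell-1}A)^2=N^{2\ell-2}A\subseteq N^\ell A=0$), and is finitely generated as a right $A$-module (as $N^{\ell-1}$ is a finitely generated ideal of $C$ and $N^{\ell-1}A=\sum n_iA$ for generators $n_i$); moreover $A/B$ is relatively free affine over the proper quotient $C/N^{\ell-1}$, hence weakly representable by (a). It then remains to lift weak representability across the finitely generated square-zero ideal $B$. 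Since $B^2=0$, $B$ is a finitely generated bimodule over $A/B$, so, choosing a witness $A/B\hookrightarrow D$ with $D$ finite-dimensional over a Noetherian base ring finitely generated over $C$, one pushes the extension $0\to B\to A\to A/B\to 0$ forward to an extension of $D$ by a finitely generated $D$-bimodule $M$, again finite-dimensional over a Noetherian ring, and $A$ embeds into it. I expect this square-zero lifting to be the most delicate point; the rest is routine.

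So assume $C$ is reduced, with minimal primes $\mathfrak p_1,\dots,\mathfrak p_k$, and let $S$ be its set of non-zero-divisors, so that $S^{-1}C=\prod_i\kappa(\mathfrak p_i)$ is a finite product of fields. If $A$ is $S$-torsion-free, then $A\hookrightarrow S^{-1}A=\prod_i\bigl((A/\mathfrak p_iA)\otimes_{C/\mathfrak p_i}\kappa(\mathfrak p_i)\bigr)$, where each factor is relatively free affine over the field $\kappa(\mathfrak p_i)$, hence representable by \Tref{4.66}; a finite product of weakly representable algebras is weakly representable, so $A$ is. If instead $B:=\tor_S(A)\neq 0$, then $B$ is a two-sided ideal (as $S$ is central), and $A/B\cong C\{x_1,\dots,x_m\}/\CI'$ where $\CI'=\{f: sf\in\CI\text{ for some }s\in S\}$ is a T-ideal with $\CI'\supsetneq\CI$, so $A/B$ is weakly representable by (b). By the affine Specht theorem $\CI'$ is generated as a T-ideal by finitely many $g_1,\dots,g_t$; picking $s_j\in S$ with $s_jg_j\in\CI$ and setting $s=s_1\cdots s_t\in S$, one checks $s\CI'\subseteq\CI$, i.e.\ $sB=0$. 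Hence $B\cap sA=0$ (if $x=sy\in B$ then $s^2y=sx=0$, so $y\in B$ and $x\in sB=0$), so $A\hookrightarrow A/B\times A/sA$, where $A/B$ is weakly representable by (b) and $A/sA$ by (a) (since $sC\neq 0$); therefore $A$ is weakly representable. This closes the induction.
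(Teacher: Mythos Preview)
Your reduced-case argument is correct and in fact cleaner than the paper's route through the irredundancy index and localization: using Specht finiteness to find a single $s\in S$ with $sB=0$, and then embedding $A\hookrightarrow (A/B)\times(A/sA)$, is exactly right. The well-foundedness of your induction order is fine too.

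The genuine gap is the square-zero lifting, which you yourself flag as ``the most delicate point'' but do not carry out. The sketch you give---push the extension $0\to B\to A\to A/B\to 0$ forward along $A/B\hookrightarrow D$---does not work as stated. The ideal $B$ is only an $(A/B)$-bimodule; there is no canonical $D$-bimodule structure on it. If you form $M=D\otimes_{A/B}B\otimes_{A/B}D$, the natural map $B\to M$ need not be injective, and even when it is, the Hochschild $2$-cocycle presenting $A$ as an extension of $A/B$ by $B$ has no reason to extend to a cocycle for $D$ with values in $M$. So you have not produced a finite algebra containing $A$. I do not see how the special shape $B=N^{\ell-1}A$ (central generators, annihilated by $N$) repairs this cheaply; the obvious attempt $A\to (A/B)\times\prod_i A/\Ann_A(n_i)$ has kernel $B\cap\bigcap_i\Ann_A(n_i)$, which is all of $B$ already when $\ell=2$.

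The paper avoids this lifting problem entirely, and its argument for \Tref{4.67} is \emph{not} a black-box reduction to \Tref{4.66}. Rather than reducing to $C$ reduced, the paper reduces to $C$ \emph{local} (via the torsion and irredundancy-index arguments of \S\ref{tors}), so that $F=C/J$ is a field. Then, choosing $s\in J^{t}$ with $sA\neq 0$, the ideal $sA$ is an $F$-module, and the paper \emph{re-invokes the hiking machinery} of Theorem~\ref{hikthm7} on polynomials lying in $sA$---together with Shirshov's theorem and Anan'in's theorem---to produce a critical polynomial $h$ and a representable $\hat A$ with $\ker(A\to\hat A)\cap hA=0$, whence $A\hookrightarrow\hat A\oplus(A/hA)$ via \Lref{Phoenix}. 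In other words, the Noetherian case requires another pass through the characteristic-coefficient machinery, not merely the statement of the field case; your hoped-for shortcut around that is precisely where your proof is incomplete.
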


Kemer obtained Theorem~\ref{4.66} over infinite fields by means of the following amazing results:

\begin{thm}[{\cite[Theorem~6.3.1]{BKR}, \cite{Kem11}}]\label{fd1}$ $
\begin{enumerate}
\item Every affine PI-algebra over an infinite field (of arbitrary characteristic) is PI-equivalent  to a
finite dimensional (f.d.)~algebra.
\item Every   PI-algebra of characteristic 0 is PI-equivalent  to the Grassmann envelope of a
finite dimensional (f.d.)~algebra.
\end{enumerate}
\end{thm}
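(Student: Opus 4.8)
The plan is to follow Kemer's program, built around the \emph{Kemer index} of a T-ideal, its \emph{Kemer polynomials}, and the \emph{Phoenix property}; I treat part (1) in detail and obtain part (2) by transporting the whole argument to the $\mathbb{Z}/2$-graded setting via the Grassmann-envelope functor.

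\emph{Setup and two reductions.} Fix an affine PI-algebra $A$ over the infinite field $F$ and put $\Gamma=\id(A)$. As $F$ is infinite, $\Gamma$ is spanned by its multihomogeneous elements, so it is enough to produce a finite-dimensional $B$ whose multilinear identities agree with those of $A$. Attach to $\Gamma$ the pair $\kappa(\Gamma)=(\beta,\gamma)$: $\beta$ is the largest $t$ such that for arbitrarily large $\mu$ some multilinear polynomial outside $\Gamma$ is alternating in $\mu$ disjoint sets of size $t$; and, $\beta$ fixed, $\gamma$ is the largest number of further disjoint sets of size $\beta+1$ that may be imposed alongside arbitrarily many sets of size $\beta$ in a multilinear non-identity. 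Order these pairs lexicographically; the multilinear non-identities that realize $\kappa(\Gamma)$ are the \emph{Kemer polynomials} of $\Gamma$. Two standard facts are used. First, for any finite-dimensional $B$ the relatively free algebra $F\{x\}/\id(B)$ is representable --- it embeds in matrices over a polynomial ring by generic elements --- so it suffices to realize $\Gamma$ as $\id(B)$. Second, every finite-dimensional algebra is PI-equivalent to a finite direct product of \emph{basic} algebras, and for a basic algebra the Kemer index is read off the structure: $\beta=\dim_F(\bar B/\Rad \bar B)$ for the relevant block, and $\gamma$ is controlled by the nilpotency index of the radical.

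\emph{Finite realizability and the induction.} The crux is that the Kemer index is \emph{finitely realized}: there is a finite-dimensional (in fact basic) $B_0$ with $\id(B_0)\subseteq\Gamma$ and $\kappa(\id(B_0))=\kappa(\Gamma)$. This is the combinatorial heart of the proof; it is here that Shirshov's height theorem and the Razmyslov--Zubrilin trace identities enter --- they bound how radical substitutions can feed into Kemer polynomials --- and it is these bounds that, in the affine case, permit one to work with an honest finite-dimensional algebra rather than a Grassmann envelope. Granting $B_0$, one runs a Noetherian-style induction on $\kappa(\Gamma)$ (lexicographically; base case $\beta=0$, i.e. $\Gamma$ nilpotent, hence finite-dimensional in the affine setting). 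If $\id(B_0)=\Gamma$, done; otherwise, adjoining the Kemer polynomials of $\Gamma$ to $\id(B_0)$ yields a T-ideal of \emph{strictly smaller} Kemer index --- this is exactly the Phoenix property, the statement that the alternation pattern defining a Kemer polynomial survives passage to the T-ideal it generates. The induction hypothesis then supplies a finite-dimensional $B_1$, and a standard upper-triangular ``gluing'' of $B_0$ and $B_1$, designed to kill precisely the surplus Kemer polynomials, produces a finite-dimensional $B$ with $\id(B)=\Gamma$.

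\emph{Graded version and the main obstacle.} Part (2) is obtained by running the same scheme for $\mathbb{Z}/2$-graded algebras --- graded T-ideals, a graded Kemer index and graded Kemer polynomials, reduction to finite-dimensional superalgebras, and a graded Phoenix property --- which in characteristic $0$ is smooth because full multilinearization is available. The Grassmann-envelope functor $B\mapsto E(B)=B_0\otimes E_0\oplus B_1\otimes E_1$ turns graded PI-equivalence of superalgebras into ordinary PI-equivalence of their envelopes, and this is exactly what accommodates the non-affine algebras appearing in part (2); transported through $E(-)$, the graded machinery yields a finite-dimensional superalgebra $B$ with $\id(R)=\id(E(B))$ for a given char-$0$ PI-algebra $R$. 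I expect the decisive obstacle to be the finite realizability of the Kemer index (and, for the affine statement, the removal of the Grassmann envelope): this forces the invention of Kemer polynomials and the delicate accounting of the size-$\beta$ folds against the size-$(\beta+1)$ folds, and it is where Shirshov's theorem and the trace-identity machinery must do the real work. I would therefore establish those estimates first, set up the basic-algebra structure theory second, and only then switch on the induction.
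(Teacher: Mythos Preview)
The paper does not prove this theorem. It is stated with attribution to Kemer, with references to \cite[Theorem~4.66]{BR} and \cite{Kem11}, and is used only as background: it explains why Theorem~\ref{4.66} is already known over infinite fields and in characteristic~$0$, and motivates why the paper must develop a different method (full quivers, hiking) for finite fields in positive characteristic. So there is no ``paper's own proof'' to compare against.

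Your outline is essentially Kemer's argument as presented in the cited sources, and the paper corroborates this in \S\ref{Kmed}, where it names the Kemer index $(\beta(\widetilde{A_0}),\gamma(\widetilde{A_0}))$, $\mu$-Kemer polynomials, and Kemer's First and Second Lemmas as the backbone of that proof. Your sketch is broadly sound; two small imprecisions are worth flagging. First, your description of the Phoenix property is slightly off: the point is not that the alternation pattern ``survives passage to the T-ideal it generates,'' but rather that any non-identity lying in the T-ideal generated by a Kemer polynomial is itself (after suitable manipulation) a Kemer polynomial --- this is what forces the index to drop once all Kemer polynomials are adjoined. Second, in the standard treatment the final step, after matching Kemer polynomials between $\Gamma$ and the finite-dimensional $B_0$, is simply the direct product $B_0\times B_1$; the upper-triangular constructions appear earlier, in building $B_0$ itself, not in the closing gluing. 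These are refinements of Kemer's proof rather than discrepancies with the present paper, which offers no proof to differ from.
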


An immediate consequence of Theorem~\ref{fd1}(1) is that every
relatively free affine PI-algebra over an infinite field is
representable, since it can be constructed with generic elements
obtained by adjoining commutative indeterminates to the
f.d.~algebra.

\begin{rem}\label{Noethind2} For graded associative algebras \cite{AB} and various nonassociative affine algebras of characteristic 0,
the finite basis of T-ideals has been established in the case when
the operator algebra is PI (Iltyakov~\cite{Ilt91,Ilt92} for
alternative and Lie algebras, and Vais and Zelmanov \cite{VZ} for
Jordan algebras) but in many cases the representability question for relatively free
affine algebras
remains open for nonassociative algebras, so   presumably is more difficult than the
finite basis of T-ideals. The obstacle is getting started via some
analog of Lewin's theorem \cite{Lew}, which often is not yet
available. Belov~\cite{B5} obtained representability of   relatively free
 alternative
or Jordan algebras satisfying all identities of some finite
dimensional algebra.
\end{rem}

\subsection{Overview of the proof of Theorem~\ref{4.66}}$ $

  Kemer deduced from Theorem~\ref{fd1} the solution of Specht's problem (the finite basis of T-ideals) for an affine algebra over an infinite
field, in which he applied combinatorial techniques to representable
algebras. The approach here for positive characteristic is the
reverse, since Theorem~\ref{fd1} can fail over finite fields.

In~\cite{BRV1}--\cite{BRV3}, \cite{BRV5}, summarized in \cite{BRV6}, we have provided a complete proof for
the affine case of Specht's problem  in arbitrary characteristic
(The non-affine case has counterexamples, cf.~\cite{B0,B1}):

\begin{thm}[\cite{BRV5}]\label{Spaff}
Any affine PI-algebra over an arbitrary commutative Noetherian ring
satisfies the ACC on T-ideals.
\end{thm}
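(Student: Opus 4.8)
The plan is to establish \Tref{Spaff} in the equivalent form of a \emph{finite basis property}: for every $n$, each T-ideal $\Gamma$ of the free algebra $C\{x_1,\dots,x_n\}$ whose quotient $C\{x_1,\dots,x_n\}/\Gamma$ is relatively free (hence affine and PI) is generated over $C$, as a T-ideal, by finitely many polynomials. That this property implies and is implied by the ACC on T-ideals of an arbitrary affine PI algebra is the standard reduction through relatively free covers, together with the observation that the union of an ascending chain of T-ideals of this type is again of this type (see \cite{BR}): if such a union were finitely based, finitely many polynomials would generate it, these would already lie in a single member of the chain, and the chain would stabilize there. So it suffices to prove the finite basis property, which I would do by induction on a structural invariant of $\Gamma$.

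That invariant is the \emph{Kemer index} $(\beta,\gamma)$ of the variety $\V(\Gamma)$, accessed through the structure of the relatively free algebra $A := C\{x_1,\dots,x_n\}/\Gamma$. Shirshov's height theorem (see \cite{BR}) bounds the lengths and degrees of the monomials in a finite generating set needed to span $A$ over $C$; the ``Canonization theorems'' of \S\ref{canonth} then present $A$ in a normal form assembled from block-triangular matrix algebras over residue fields, the generic entries glued according to a finite combinatorial datum, the \emph{full quiver}. Here $\beta$ records the largest ``semisimple layer'' --- the maximal size of Capelli-type alternating sets that occur with unbounded multiplicity in a non-identity of $A$ --- and $\gamma$ the associated number of ``radical layers''; for affine PI $\Gamma$ these are finite, and the pairs $(\beta,\gamma)$ form a well-ordered set. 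The base of the induction is the least index, where $\V(\Gamma)$ is essentially commutative (representable of ``matrix dimension zero'') and the finite basis property is classical, so assume the index is larger and the property already known for all T-ideals of this type with strictly smaller index.

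For the inductive step, fix the set $K$ of \emph{Kemer polynomials} of $\V(\Gamma)$ realizing the index --- non-identities with the maximal alternating structure the index permits --- and set $\Gamma^* := \Gamma + \langle K \rangle_T$. Since the Kemer polynomials are non-identities, $\Gamma^* \neq \Gamma$; forcing them to vanish strictly lowers the index, so $\Gamma^*$ has smaller index and is finitely based by the induction hypothesis, say $\Gamma^* = \langle h_1, \dots, h_m\rangle_T$. Writing $h_i = p_i + q_i$ with $p_i \in \Gamma$ and $q_i$ a consequence of $K$, we get $\Gamma^* = \langle p_1, \dots, p_m\rangle_T + \langle q_1, \dots, q_m\rangle_T$, and modding out by $\langle p_1,\dots,p_m\rangle_T$ realizes $\Gamma/\langle p_1,\dots,p_m\rangle_T$ as a sub-T-ideal of the T-ideal generated by the consequences of $K$; it therefore suffices to finitely generate this sub-T-ideal. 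This is where hiking (\Tref{hikthm7}) enters: it rewrites the Kemer consequences, modulo $\Gamma$, into normal forms adapted to the full quiver in which the characteristic coefficients of the generic semisimple blocks appear explicitly --- combining the matrix-trace construction of Definition~\ref{trmat0} with Amitsur's trick (Lemma~\ref{q1}), Shirshov's height theorem bounding how many coefficients are needed. Adjoining these finitely many coefficients to $C$ and carving out their action exhibits the span of the Kemer consequences as a finitely generated module over a commutative Noetherian ring; hence $\Gamma/\langle p_1,\dots,p_m\rangle_T$ is finitely generated --- as a module, a fortiori as a T-ideal --- and $\Gamma$ itself is finitely based, completing the induction. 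One may instead package this as the ``Noetherian induction on T-ideals'' of the paper, replacing the induction hypothesis on the index by the maximality of a counterexample $\Gamma$, which makes $\Gamma^*$ finitely based at once.

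The main obstacle is the positive-characteristic case over a \emph{finite} base, where \Tref{fd1} is unavailable, so one cannot replace $A$ by a genuine finite dimensional algebra, nor restrict attention to multilinear or even homogeneous polynomials. Hiking must then be carried out on \emph{nonhomogeneous} polynomials whose alternating components occur with different multiplicities; Amitsur's trick does not apply to such a polynomial directly, and the two notions of characteristic coefficient --- Amitsur's, and that of Definition~\ref{trmat0} --- must be reconciled. As the introduction indicates, this forces the carving, and with it the Spechtian induction, to be run over a Noetherian \emph{module} rather than an algebra. Making hiking, the bookkeeping of the Kemer index, and this module-theoretic descent mutually compatible --- uniformly across the full quiver --- is where essentially all the work lies. (This is, in essence, the strategy of \cite{BRV5}.)
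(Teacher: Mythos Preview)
The paper does not prove \Tref{Spaff}; it is quoted from \cite{BRV5} and used as an input to the rest of the argument (Specht induction, \Rref{Noethind0} and \Rref{Noethind}). There is therefore no ``paper's own proof'' to compare your proposal against.

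That said, your sketch has a structural circularity when read against this paper. You invoke \Tref{hikthm7}, Definition~\ref{trmat0}, and Lemma~\ref{q1} as ingredients in the proof of \Tref{Spaff}, but in the present paper these are developed \emph{after} \Tref{Spaff} is already in hand, and the entire framework of \S\ref{hik}--\S\ref{trab} is set up with a Specht-minimal counterexample $A_0$ whose existence presupposes the ACC on T-ideals. The analogous tools in \cite{BRV5} are logically prior and self-contained; if you mean those, you should cite them there rather than their incarnations here. Your broad outline---Kemer index as induction parameter, passing to $\Gamma^*$ by adjoining Kemer polynomials, and using hiking plus Shirshov to exhibit the residual T-ideal as a Noetherian module---is indeed the shape of the argument in \cite{BRV5}, as you note in your final parenthetical; but as a proof \emph{within this paper} it cannot stand, since the cited internal results depend on the very theorem you are trying to establish.
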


So we may start with the solution of Specht's problem and apply
Noetherian induction to prove representability of affine relatively
free PI-algebras over  finite fields. Together with
Theorem~\ref{fd1}, we then have Theorem~\ref{4.66}. (These methods
also work in characteristic~0, but then rely on Kemer's solution of
Specht's problem in characteristic~0, which in turn requires his
representability theorem in characteristic~0.)

Then we apply facts about torsion in rings, collected in \S\ref{467}, to obtain
Theorem~\ref{4.67}.

 \section{Preliminaries to the proof of Theorems~\ref{4.66} and ~\ref{4.67} }

We fix the following notation: We start with  the free associative affine algebra $C \{ x \} = C\{ x_1,
\dots, x_\ell\}$ in $\ell$ indeterminates, and a T-ideal $\CI$.
This
gives us the relatively free algebra
$$A = C \{ x \}/\CI.$$  We say that the T-ideal $\CI$ is
\textbf{(weakly) representable} if the affine algebra $A $ is (weakly) representable.

\subsection{The underlying approach}$ $

Note that the direct sum of two  weakly
 representable algebras is  weakly representable, and the direct sum of two
 representable algebras over a field is   representable.

\begin{rem}\label{Noethind0}
The proof of Theorems~\ref{4.66} and ~\ref{4.67} goes along the
following version of Noetherian induction:

We shall show that every T-ideal $\CI$ is weakly representable, and is
representable when $C$ is a field. Taking a T-ideal $\CI$ maximal
with respect to $A$ not weakly representable, we call  $A$ a
\textbf{Specht minimal counterexample}.

For $C$ a field $F$, a key technique is to embed $A$ into a direct
sum of two relatively free algebras, one of which is a homomorphic
image of $A$ and thus representable by Specht induction, and the
other of which is representable by some structural argument.
\end{rem}

We call this argument  \textbf{Specht induction}. It was utilized by
Kemer in his proof of Specht's conjecture, cf.~\cite[Proposition
6.6.31]{BKR} and can be treated abstractly, applicable also to
nonassociative algebras:

\begin{lem}\label{Phoenix}
Suppose that the
relatively free $F$-algebra $A$ is a Specht minimal counterexample
to representability, and~$f$ is a polynomial generating a T-ideal $\langle f \rangle_T$ which  annihilates some
T-ideal of~$A$, and $A/\mathcal I$ is representable for some T-ideal
$\mathcal I$ for which $\mathcal I \cap \langle f \rangle_T = 0.$ Then we have a
contradiction to $A$ being a Specht minimal counterexample.
\end{lem}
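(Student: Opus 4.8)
The plan is to run the usual ``minimal counterexample'' argument: granting the hypotheses, I will realize $A$ as an $F$-subalgebra of a direct sum of two representable relatively free algebras, contradicting that $A$ is Specht minimal.

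First I would name the two factors. Write $A = F\{x\}/\CI$, with $\CI$ maximal among the T-ideals of $F\{x\}$ whose relatively free algebra fails to be representable; let $\mathcal I$ be the given T-ideal of $A$, so $A/\mathcal I$ is representable and $\mathcal I\cap fA = 0$; and let $fA$ be the ideal of $A$ generated by the evaluations of~$f$, which I take as the complementary factor. The point to verify is that $fA$ is a \emph{T-ideal} of $A$: any endomorphism of $A$ sends an evaluation of $f$ to another evaluation of $f$, hence maps the ideal they generate into itself. So $\mathcal I$ and $fA$ are each of the form $\overline{\CJ} = \CJ/\CI$ for T-ideals $\CJ\supseteq\CI$ of $F\{x\}$, and $A/\mathcal I$, $A/fA$ are again relatively free. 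Crucially $fA\neq 0$ --- equivalently, $f$ is not a PI of $A$ --- which is exactly what the hypothesis on $f$ is there to guarantee (without it the statement is vacuous, since then $A/fA$ would just be $A$). Thus $fA = \CJ'/\CI$ for a T-ideal $\CJ'\supsetneq\CI$.

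Next I would assemble the embedding. By the maximality of $\CI$ (``Specht induction'', as in Remark~\ref{Noethind0}) the strictly larger T-ideal $\CJ'$ has representable relatively free algebra, i.e.\ $A/fA$ is representable; and $A/\mathcal I$ is representable by hypothesis. Since $\mathcal I\cap fA = 0$, the diagonal homomorphism $A \to (A/\mathcal I)\oplus(A/fA)$ is injective, so $A$ embeds as an $F$-subalgebra of $(A/\mathcal I)\oplus(A/fA)$. A subalgebra of a representable algebra is representable, and the direct sum of two representable $F$-algebras is representable (as observed just before Remark~\ref{Noethind0}); therefore $A$ is representable, contradicting that it is a Specht minimal counterexample. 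That contradiction is the statement to be proved.

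Internally the lemma is soft --- just a subdirect decomposition --- so there is no real obstacle in the argument itself; the one point that needs care is checking that $fA$ is a \emph{nonzero} T-ideal, i.e.\ that $A/fA$ is a \emph{proper} relatively free quotient of $A$ on which Specht induction can bite, and this is precisely the role of the hypothesis on $f$. The genuine difficulty lies outside the lemma: it is the job, carried out elsewhere via hiking, of producing such a non-identity $f$ together with a T-ideal $\mathcal I$ with $\mathcal I\cap fA = 0$ and $A/\mathcal I$ representable.
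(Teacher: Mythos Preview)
Your proof is correct and follows essentially the same route as the paper: embed $A$ diagonally into $(A/\mathcal I)\oplus(A/fA)$ using $\mathcal I\cap fA=0$, invoke Specht induction to get $A/fA$ representable, and conclude. You simply supply the details the paper omits (that $fA$ is a nonzero T-ideal so that Specht induction applies), which is a welcome clarification.
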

\begin{proof}
We can embed $A$ into $(A/\mathcal I)\oplus \langle f \rangle_T.$ But $\langle f \rangle_T$ is
representable by Specht induction, so $A$ is representable, contrary
to assumption.
\end{proof}

The bulk of this paper consists of the proof of Theorem~\ref{4.66}.
From now on,  $A$ is assumed to be a Specht minimal counterexample.
The proof  relies on the ideas of the proof of Kemer's
representability theorem of relatively free
affine algebras given in \cite{BR,BKR}. Much of this paper
is devoted to elaborating the theory of \cite{BRV2} and \cite{BRV3}
in the field-theoretic case, as described in \cite{BRV5,BRV6}, and
there is a considerable overlap with~\cite{B2}.

Until \S\ref{467}, we work over a field $F$. Since the result is
known in characteristic~0, we assume through
Section~\ref{stillfield} that $F$ has characteristic $p>0.$

 \begin{rem}\label{Noethind}
In view of Lewin's theorem~\cite{Lew}, any
T-ideal $\CI$ contains a representable T-ideal, which by
Theorem~\ref{Spaff} is contained in a maximal representable T-ideal
$ \CI_0$ of $A$ contained in $\CI$, which we aim to show is equal
to~$\CI$. Thus $A_0: = F \{ x \}/\CI_0$ is representable (and affine
over $F$). Assuming   that $\CI_0 \ne \CI,$ we have reduced to the
case where $A_0$ is representable but every nonzero T-ideal of~$A_0$
contained in $\CI$ and properly containing $ \CI_0$ is not
representable.
%
%

Our goal is to arrive at a contradiction by starting with a
polynomial  $f \in \CI \setminus \CI_0$ (in other words, a
non-identity of $A_0$ in $\CI$) and adjusting $f$ to a polynomial
$\tilde f \in \CI \setminus \CI_0$, such that the T-ideal $\CI_1
\subseteq \CI$
 generated by $\CI_0$ and $\tilde f$ is a representable T-ideal.  In this manner, we do
not need to introduce parameters of induction, as opposed to Kemer's
approach described in \S\ref{Kmed}.
\end{rem}

In our proof, we start with $A_0 = F \{ x \}/\CI_0$ of
Remark~\ref{Noethind}.

\begin{notation}\label{not1}  Being representable,
 $A_0 \subset
\M[n](K)$ with $K$ algebraically closed, and we fix this particular
representation. The ``Zariski closure'' $\widetilde {A_0}$ of $A_0$
in $\M[n](K)$ with respect to the Zariski topology
\cite[\S~3.1]{BRV6} is PI-equivalent to $A_0$, and  so we work
throughout with ~$\widetilde {A_0}$. We emphasize that $\CI_0$ is
the  T-ideal of identities of $\widetilde {A_0}$ as well as of~$A_0$.
(When $F$ is infinite then we may assume that $K=F$,
cf.~\cite[Remark~3.1]{BRV6}, but the situation for $F$ finite is
more delicate.) By the version of Wedderburn's Principal Theorem
\cite[Theorem 2.5.37]{Row1.5}, $\widetilde {A_0} = S \oplus J$ as
vector spaces, where $J$ is the radical of $\widetilde {A_0}$ and $S
\cong \widetilde {A_0}/J$ is a semisimple subalgebra of $\widetilde
{A_0}$. Thus $S$ is a direct product of matrix algebras $R_1 \times
\dots \times R_k$, called \textbf{Wedderburn blocks}, which we want
to view along the diagonal of $\M[n](K)$, although  possibly with
some identification of coordinates, which are to be described
graphically. By the Braun-Kemer-Razmyslov theorem, cf.~\cite{Br},
$J$ is nilpotent, so we take $t:  = t_{\widetilde {A_0}}$ maximal
such that $J^t \ne 0.$ This description of $A$ as  $(R_1 \times
\dots \times R_k)\oplus J$ is called  \textbf{Wedderburn block form}.
\end{notation}

\subsubsection{Multilinearization versus quasi-linearization}$ $

The well-known linearization process of a polynomial can be
described in two stages: First, writing a polynomial $f(x_1, \dots,
x_n)$ as
$$f(0,x_2, \dots, x_n)+ (f(x_1, \dots, x_n)- f(0,x_2, \dots, x_n)),$$ one sees by
iteration that any T-ideal is additively spanned by T-ideals of
 polynomials for which each indeterminate appearing nontrivially
appears in each of its monomials,
 cf.~\cite[Exercise~2.3.7]{Row1}. Then we could define the \textbf{linearization process}
 by
 introducing a new indeterminate $x_i'$ and
 passing to $$f(x_1, \dots, x_i +x_i', \dots, x_m) - f(x_1, \dots, x_i , \dots,
 x_m)-f(x_1, \dots, x_i', \dots, x_m).$$
This process, applied repeatedly, yields a multilinear polynomial in
the same T-ideal. In characteristic 0 the multilinearization process
can be reversed by taking $x_i' = x_i,$ implying that every T-ideal
is generated by multilinear polynomials. But this fails in positive
characteristic, and more generally when integers are not invertible,
as exemplified by the Boolean identity $x^2 -x$, so we need an
alternative. To handle characteristic $p>0$, Kemer ~\cite{Kem2}
considered the following modification, which we review
from~\cite{BRV6}.

\begin{defn}\label{QL}
A polynomial $f\in \CI$ is $i$-\textbf{quasi-linear} on an algebra
$A$ if
$$f(\dots, a_i + a_i', \dots) = f(\dots, a_i , \dots)+f(\dots,
a_i', \dots)$$ for all $a_i, a_i' \in A; $ $f$ is
$A$-\textbf{quasi-linear} if $f$ is $i$-quasi-linear on $A$ for all
$i$.\end{defn}

Suppose $f(x_1, x_2, \dots) \in F\{ x\}$ has degree $d_i$ in $x_i$.
The \textbf{$i$-partial linearization step} of~$f $ is
\begin{equation}\label{partlin}\Delta_i f := f(x_1, x_2, \dots, x_{i,1}+\cdots+ x_{i,d_i}, \dots)- \sum
_{j=1}^{d_i} f(x_1, x_2, \dots, x_{i,j}, \dots)\end{equation} where
the substitutions were made in the $i$ component, and
$x_{i,1},\dots,x_{i,d_i}$ are new variables.

 When $\Delta_if (A)= 0$,
then $f$ is $i$-quasi-linear on $A$, so given a non-identity $f$
of $A$ we apply \eqref{partlin} at most $\deg_i f$ times repeatedly,
if necessary, to each $x_i$ in turn, to obtain a non-identity of $A$
in the T-ideal of $f$, that is quasi-linear.

\begin{prop}[{Special case of \cite[Theorem 1.4]{D}, also cf.~\cite[Corollary~2.13]{BRV4}}]\label{quasi}
Assume $\mychar F = p > 0$. For any non-identity~$f$  of $A$, the
T-ideal generated by $f$ contains a quasi-linear non-identity of
$A$, for which the degree in each indeterminate is a $p$-power.
\end{prop}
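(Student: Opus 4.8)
The plan is to build the required polynomial in two stages: first quasi-linearize, then force the degree in each indeterminate to be a power of $p$.

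\emph{Stage~1 (quasi-linearization)} is exactly the reduction recorded in the paragraph following \eqref{partlin}. Starting from a non-identity $f$ of $A$, I would run over the indeterminates: as long as the current polynomial $g$ fails to be $i$-quasi-linear on $A$, replace $g$ by $\Delta_i g$ of \eqref{partlin}. Then $\Delta_i g$ is again a non-identity of $A$, it lies in the T-ideal generated by $f$, and in it every copy $x_{i,k}$ of $x_i$ has degree strictly less than $\deg_{x_i} g$. Quasi-linearizing the $x_i$-family (including the new variables it spawns) before passing to $x_{i+1}$, and noting that applying $\Delta_{i+1}$ cannot destroy $i$-quasi-linearity (since $\Delta_i$ and $\Delta_{i+1}$ commute and $\id(A)$ is a T-ideal), this terminates by an obvious induction on the degrees, leaving a quasi-linear non-identity $g$ of $A$ in the T-ideal generated by $f$.

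\emph{Stage~2 (degrees become $p$-powers).} The mechanism is transparent when $F$ is infinite, and this is the model to keep in mind. Over an infinite field every T-ideal is homogeneous, so all homogeneous-in-$x_i$ components of $g$ already lie in the T-ideal generated by $f$; and if the top component $g^{[d]}$, of degree $d=\deg_{x_i}g$, had $d$ not a power of $p$, then, substituting $x_i\mapsto x_i+x_i'$, separating bihomogeneous parts by a Vandermonde argument over the (infinite) field $F$, and using $i$-quasi-linearity, one finds that $g^{[d]}_{(q)}\in\id(A)$, where $q$ is the largest power of $p$ with $q\le d$ (so $q<d$) and $g^{[d]}_{(q)}$ is the bidegree-$(q,d-q)$ part of $g^{[d]}(x_i+x_i')$. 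Specializing $x_i'\mapsto x_i$ and using $g^{[d]}_{(q)}(x_i,x_i)=\binom{d}{q}g^{[d]}(x_i)$ with $\binom{d}{q}\not\equiv 0\pmod{p}$ (Lucas' theorem, as $d$ is not a $p$-power), one gets $g^{[d]}\in\id(A)$. Hence $g-g^{[d]}$ is a quasi-linear non-identity of $A$ in the T-ideal generated by $f$ of strictly smaller degree in $x_i$; iterating, then running over all indeterminates, leaves all degrees equal to powers of $p$.

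The step I expect to be the real obstacle is the finite-field case, which is why the statement is attributed to \cite[Theorem~1.4]{D}. Over a finite field neither the T-ideal generated by $f$ nor $\id(A)$ is homogeneous --- Boolean-type phenomena such as $x^{q}$ acting as $x$ on $\F_q$ obstruct the move ``discard the top component'' --- so one must carry nonhomogeneous polynomials throughout, and passing to $p$-power degrees can no longer only lower degrees: one sometimes has to raise a degree to the next power of $p$ (the toy case being the replacement of $x^{3}$ by $x^{4}=x\cdot x^{3}$ over $\F_2$, where $x^{4}$ is again quasi-linear because $4$ is a power of $p$ and the base is commutative). Organizing this in general --- controlling, through the arithmetic of binomial coefficients modulo $p$, how multiplicities and degrees interact when homogeneous components cannot be split off --- is the content of \cite[Theorem~1.4]{D} and \cite[Corollary~2.13]{BRV4}, whose conclusion we use; a bookkeeping induction on the multiset of degrees that are not powers of $p$ then closes the argument.
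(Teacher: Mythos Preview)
The paper does not give its own proof of this proposition: it is stated as a citation of \cite[Theorem~1.4]{D} and \cite[Corollary~2.13]{BRV4}, with only the quasi-linearization step (your Stage~1) sketched in the paragraph preceding it. Your proposal follows the same route---Stage~1 reproduces that paragraph verbatim, and for the finite-field case in Stage~2 you, like the paper, defer to the cited references. The one addition is your explicit treatment of the infinite-field case via homogeneous components and Lucas' theorem; this argument is correct (in particular $\binom{d}{q}\equiv d_k\not\equiv 0\pmod p$ when $q=p^k$ is the top $p$-power in $d$), but the paper omits it since the cited result covers all fields uniformly.
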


We apply all this to $\widetilde {A_0}$. We just say that an
$\widetilde {A_0}$-quasi-linear polynomial $f$ is
\textbf{quasi-linear}.
 In view of Proposition~\ref{quasi} we assume
from now on that our polynomial $f(x_1, \dots, x_n)$ is
quasi-linear. When specializing~$x_i$ to an element $\bar x_i$
of~$\widetilde {A_0}$, we call the substitution $\bar x_i$
\textbf{radical} if $\bar x_i \in J,$ and \textbf{semisimple} if $\bar
x_i \in S.$ The substitution $\bar x_i$ is \textbf{pure} if it is
radical or semisimple. The substitution  $f(\bar x_1, \dots, \bar
x_n)$ of $f(x_1, \dots, x_n)$ is \textbf{pure} if each  $\bar x_i$
is {pure}. Writing any substitution $\bar x_i$ as a sum of radical
and semisimple substitutions,    since $f(x_1, \dots, x_m)$ is
quasi-linear, we can reduce all substitutions in $J+S$ to   pure
substitutions (in $ J\cup R_1 \cup \dots \cup R_k$). In particular,
$f$ has a nonzero specialization where all substitutions $\bar x_i$
are pure.

Any semisimple substitution $\bar x_i$ is  in $S$ and thus in a
block (or in glued blocks) of some degree $n_i$, which we also call
the \textbf{degree} of $\bar x_i$. A~radical substitution $\bar x_i$
is somewhat more subtle. It is viewed as an edge connecting two
vertices in blocks, say of degrees $n_{i_1}$ and $n_{i_2} $. If
these blocks are not glued, then we call this substitution a
\textbf{bridge} of \textbf{degrees} $n_{i_1}$ and $n_{i_2} $. A
bridge is \textbf{proper} if $n_{i_1} \ne n_{i_2} $.

\subsubsection{Folds in a polynomial}$ $

As in Kemer's proof of Specht's conjecture and the proof  given in
\cite{BKR}, our first task is to estimate $d:=[\widetilde {A_0}:F]$
and $t$ in terms of polynomials.
\begin{defn}\label{alt4} A polynomial $f(x_1, \dots, x_n)$ \textbf{$m$-alternates in}
$x_{i_1},\dots, x_{i_m}$ if $$f(\dots,x_{i_1},\dots, x_{i_2},\dots,
x_{i_m},\dots) = \sgn (\pi) f(\dots,x_{\pi(i_1)},\dots,
x_{\pi(i_2)},\dots, x_{\pi(i_m)},\dots)$$ for any $\pi \in S_m$.
\end{defn}

For example, $c_m$  denotes the \textbf{Capelli polynomial} in $2m$
distinct
indeterminates, 
which is alternating in the first $m$ indeterminates (i.e., switches
sign when interchanging two of these indeterminates). Thus,  for any
field $F$, $c_{2m^2}$ is an identity of $\M[m-1](F)$ but not an
identity of $\M[m](F)$.

We need several sets of alternating indeterminates.

\begin{defn}\label{alt3}
A polynomial $f(x_1, \dots, x_n)$
is \textbf{$\mu$-fold $m$-alternating} if $f$ alternates in $\mu$
disjoint sets of indeterminates $\{x_{j_{r1}}, \dots, x_{j_{rm}}\}$,
$1 \le r \le \mu$.
\end{defn}

\begin{rem}\label{fold}
Given a polynomial $f(x_1, \dots, x_n)$, one way of increasing the
number of $m$-alternating folds  is by replacing an indeterminate
$x_i$ occurring linearly in $f$  by $x_i c_m (x_{n+1}, \dots
x_{n+m^2}).$

For example $h_{m,i}(y)$ denotes a multilinear central polynomial
$h_{m,i}(y_{i_1}, \dots, y_{i_m'})$ for $\M[m](F)$, alternating in
specific  indeterminates $y_{i_1}, \dots, y_{i_{m^2}}$  which are
all distinct. This is done by   inserting a fold into a multilinear
central polynomial, cf.~\cite[pp.~37,38]{BKR}.
\end{rem}

\subsection{Comparison with Kemer's method}\label{Kmed}$ $

 The proof of Theorem~\ref{Spaff} in \cite{BRV5} is somewhat
different from Kemer's proof.
 Kemer brought in some combinatoric
definitions:

\begin{defn}   $\beta( {A_0})$ is the largest $\beta$ such that, for any
$\mu$, there is a $\mu$-fold $\beta$-alternating non-identity of $
{A_0}$.

$\gamma( {A_0})$ is the largest $\gamma$ such that, for arbitrarily
large $\mu$ there is a $\mu$-fold, $\beta(\widetilde
{A_0})$-alternating and $(\gamma-1)$-fold, $(\beta(\widetilde
{A_0})+1)$-alternating non-identity of $ {A_0}$. Such a polynomial
is called a $\mu$-\textbf{Kemer polynomial} for $ {A_0}$,
\cite[Definition~6.6.7]{BKR}.

The pair $(\beta( {A_0}),\gamma( {A_0}))$ is called the
\textbf{Kemer index} of $A_0,$ which we order lexicographically,

 The \Zcd\ algebra $\widetilde {A_0}$ is \textbf{full}
 with respect to a monomial $g$ if
    some nonzero
substitution of $g$  passes through all the blocks of the quiver.

A multilinear polynomial $f$ has \textbf{Property K} on a f.d.
algebra $W$ if $f$ vanishes under any specialization with fewer than
$t-1$ radical substitutions.
\end{defn}

In this case, in characteristic 0, Kemer's First
Lemma~(\cite[Proposition~6.5.2]{BKR}) says that, for $F= \bar F$
algebraically closed, if $\widetilde {A_0}$ is full then
$\beta(\widetilde {A_0})  = [\widetilde {A_0}:F]$; Kemer's Second
Lemma~(\cite[Proposition~6.6.31]{BKR}) says that when $\widetilde
{A_0}$ is not PI-equivalent to a finite direct product of algebras
of lower Kemer index, $\gamma(\widetilde {A_0})$ is the index of
nilpotence of $J$, and~$A$ has multilinear $\mu$-Kemer polynomials
for arbitrarily large $\mu$.

Kemer's First and Second Lemma are the keys to Kemer's proof, with
the pair $(\beta(\widetilde {A_0}),\gamma(\widetilde {A_0}))$
forming the basis for induction. But one relies on characteristic 0,
in order to stay within the T-ideal $\CI$ when multilinearizing.  In
characteristic $p$ one only has  quasi-linearization, so we need
some alternative form of induction. Moreover, there is no obvious
way to pass to basic algebras since we are working with $\CI_0$, not
$\CI.$ So we turn to a different method not relying on these
parameters.

\subsubsection{The alternative method of full quivers}$ $

Let us review some of the main techniques we need for the proof. The
reader can refer to \cite{BRV6} for further details. We
 rely on  two
languages: quivers $\Gamma$ of  the representations of $A$ on one
hand, versus the combinatorial language of polynomials on the other
hand.

First we bring in the  language of quivers.
 In~\cite{BRV2} we considered the {\bf
full quiver} of a representation of an associative algebra over a
field, and determined properties of full quivers by means of a close
examination of the structure of the closure under the  Zariski
topology, studied in \cite{BRV1}. Then we modified $f$ by means of a
``hiking procedure'' in order to force $f$ to have certain
combinatorial properties, and used this to carve out a T-ideal
$\mathcal J$ from inside a given T-ideal; modding out $\mathcal J$
lowers the quiver in some sense, and then one obtains Specht's
conjecture by induction. Hiking turns out to be a powerful but
intricate tool.

Our approach here is similar, but with some variation. Here we need
not mod out by $\mathcal J$, but do need $\mathcal J$ to be
representable. We start the same way, but one of the key steps  in
\cite{BRV5} fails since we must cope with non-multilinear
polynomials, and we need a way of getting around it.

%
%

%
\subsection{Review of full quivers}$ $

 One of the most
useful tools in  representation theory is the quiver of a
f.d.~algebra, for which we  present a modification (for the Zariski
closed algebra $\widetilde {A_0}$) more pertinent to PI-theory.

We need an explicit description, but which may distinguish among
Morita equivalent algebras since matrix algebras of different size
are not PI-equivalent. The {\bf full quiver} of ${A_0}$, or of its
Zariski closure $\widetilde {A_0}$, is a directed graph $\Gamma$,
having neither double edges nor cycles, with the following
information attached to the vertices and edges:

The vertices of the full quiver of $\widetilde {A_0}$ correspond to
the diagonal matrix blocks arising in the semisimple part $S$,
whereas the arrows
come from the radical~$J$. Every vertex likewise corresponds to a
central idempotent in a corresponding matrix block of $\M[n](K)$. 

\begin{itemize}
\item The vertices are ordered, say from
$\bf 1$ to $\bf k$, and an edge always takes a vertex to a vertex of
higher order. There are identifications of vertices, i.e., of matrix
blocks, called \textbf{diagonal gluing}, and identification of
edges, called \textbf{off-diagonal gluing}. Gluing of vertices in
full quivers is \textbf{identical}, as in $\set{\smat{\alpha}{*}{0}{\alpha} \suchthat \alpha \in \bar F}$, or \textbf{Frobenius}, as in
$\set{\smat{\alpha}{0}{0}{\alpha^q} \suchthat \alpha \in \bar F}$
 where $\card{F} = q$.
\item Each vertex is labeled
with a roman numeral ($I$, $\II$ etc.); glued vertices are labeled
with the same roman numeral. A
vertex can be either \textbf{filled} or \textbf{empty}. 

The first vertex listed in a glued matrix block is also given a pair
of subscripts --- the \textbf{matrix degree} $n_{\bf i}$ and the
\textbf{cardinality} of the corresponding field extension of $F$
(which, when finite, is denoted as a power $q^{t_{\bf i}}$ of $q
=\card{F}$).
\item Superscripts  indicate the
\textbf{Frobenius twist} between glued vertices, induced by the
Frobenius automorphism $a \mapsto a^q;$ this could identify $a^{q_1}
$ with $a^{q_2}$ for powers $q_1,q_2$ of $q$ (or equivalently $a$
with $a^{q_2/q_1}$ when $q_1 < q_2$); we call this
\textbf{$(q_1,q_2)$-Frobenius gluing}.

\item Off-diagonal gluing (i.e., gluing among the edges) includes Frobenius gluing (which only
exists in nonzero characteristic) and \textbf{proportional gluing}
obtained by multiplying by an accompanying \textbf{scaling
factor}~$\nu$. \textbf{Proportional Frobenius gluing} is Frobenius
gluing combined at the same time with proportional gluing.
\end{itemize}
 Examples are given   in \cite{BRV3}.

%
%
%
%
%

\subsection{Review of the three  canonization theorems for
quivers}\label{canonth}$ $

Since arbitrary gluing is difficult to describe, we need some
``canonization'' theorems to ``improve'' the gluing. The first
theorem shows that we have already specified enough kinds of gluing.

\begin{thm}[{First Canonization Theorem, cf.~\cite[Theorem~6.12]{BRV2}}]\label{main}
The \Zcr\ $\widetilde {A_0}$ of any representable affine PI-algebra
$A_0$ has a representation for whose full quiver every gluing is
proportional Frobenius.
\end{thm}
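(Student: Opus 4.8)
The plan is to analyze an arbitrary representation of $\widetilde{A_0}$ inside $\M[n](K)$ and successively normalize the gluing data attached to its full quiver, producing at the end a (possibly different) representation in which every gluing is proportional Frobenius. I would fix the Wedderburn decomposition $\widetilde{A_0}=S\oplus J$ with $S=R_1\times\dots\times R_k$ along the diagonal, and record the gluing relations as the affine-algebraic constraints cutting out $\widetilde{A_0}$ from $\M[n](K)$: each diagonal gluing is a relation of the form ``block $R_{\bf i}$ equals block $R_{\bf j}$ up to an algebra isomorphism,'' and each off-diagonal gluing is a linear relation among radical entries. The first task is to classify, on the diagonal, the possible isomorphisms $R_{\bf i}\to R_{\bf j}$ of matrix algebras over (possibly distinct) field extensions of $F$; by the Skolem--Noether theorem over the common scalar field together with the structure of field extensions, such an isomorphism exists only when the matrix degrees agree and the two fields are related by a Frobenius map (in characteristic $p$) or an identification (in characteristic $0$), and after conjugating the representation we may take the isomorphism to be literally a Frobenius twist on entries. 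This is where \cite[Theorem~6.12]{BRV2} is invoked as the source of the detailed bookkeeping; the point here is that the enlarged list of gluing types (identical, Frobenius, proportional, and their combinations) is closed under the operations one needs.

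Next I would handle off-diagonal (edge) gluing. An identification of two radical blocks $J_{\bf i\bf j}$ and $J_{\bf i'\bf j'}$ is a $K$-linear map intertwining the already-normalized actions of the two diagonal blocks on each side; by Schur's lemma applied over the relevant field extension, the space of such intertwiners is at most one-dimensional, so any such gluing is multiplication by a single scalar $\nu\in K$ (proportional gluing) possibly composed with a Frobenius twist inherited from the diagonal normalization — i.e., proportional Frobenius. The remaining point is compatibility: after conjugating to normalize the diagonal, one must check that no previously-proportional edge gluing is forced to become something worse, and that the finitely many conjugations can be carried out simultaneously; this follows because the conjugating elements can be chosen block-diagonally, hence commute with the decomposition into radical blocks.

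I expect the main obstacle to be the bookkeeping when the base field $F$ is finite, so that the residue fields $F_{\bf i}$ of the blocks are finite extensions of $F$ and the Frobenius automorphism $a\mapsto a^q$ acts nontrivially: one must track the Frobenius twists $q^{t_{\bf i}}$ through both the diagonal and the off-diagonal normalizations and verify they compose consistently around the (acyclic, double-edge-free) quiver, so that no contradictory twist is demanded. Here the absence of cycles in $\Gamma$ is essential — it lets one orient the normalization from lower vertices to higher ones and fix the twist at each vertex once and for all. Since all of this is exactly the content established in \cite[Theorem~6.12]{BRV2}, the proof is ultimately a citation together with the observation that the PI-equivalence of $\widetilde{A_0}$ with $A_0$ (Notation~\ref{not1}) means we are free to replace the given representation by the normalized one without changing $\CI_0$. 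Accordingly I would phrase the proof as: apply the construction of \cite[Theorem~6.12]{BRV2} to the fixed representation of $\widetilde{A_0}$, use Skolem--Noether and Schur's lemma to see that the only surviving gluing data are proportional Frobenius, and conclude.
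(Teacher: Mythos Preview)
The paper provides no proof of this theorem at all: it is stated purely as a citation of \cite[Theorem~6.12]{BRV2}, with no argument given in the present paper. Your proposal correctly recognizes this and ultimately defers to the same citation, so in that sense you are aligned with the paper's treatment.

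Your sketch of how the cited proof goes (normalizing diagonal gluing via Skolem--Noether to reduce to Frobenius twists, then using a Schur-type one-dimensionality argument on bimodule intertwiners to force off-diagonal gluing to be scalar-times-Frobenius, with the acyclicity of $\Gamma$ ensuring consistency) is a plausible outline of the strategy in \cite{BRV2}, and is already more than the present paper attempts. Since the paper offers nothing to compare against beyond the bare reference, there is no divergence to discuss; if anything, your write-up could be shortened to a single sentence deferring to \cite[Theorem~6.12]{BRV2}, which is exactly what the paper does.
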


For the Second Canonization Theorem we grade paths according to the
following rule:

\begin{defn} When $|F| = q < \infty,$ we write $\MG_\infty$ for the multiplicative monoid $\set{1,q,q^2,\dots,\epsilon}$,
where $\epsilon a = \epsilon$ for every $a \in \MG_{\infty}$. (In
other words, $\epsilon$ is the zero element adjoined to the
multiplicative monoid $\langle q \rangle$.)   Let $\bM$ be the
semigroup $\MG_\infty /\!\!\sim$
 where $\sim$ is the equivalence relation obtained by
matching the degrees of glued variables: When two vertices have a
$(q_1,q_2)$-Frobenius twist, we identify $1$ with  $
\frac{q_1}{q_2}$ in the respective matrix blocks, and use $\bM$ to
grade the paths.
\end{defn}

\begin{defn}
A~full quiver is \textbf{primitive} (called \textbf{basic} in
\cite{BRV3}) if it has a unique initial vertex $r$ and unique
terminal vertex $s$, and all of its gluing above the diagonal is
proportional Frobenius. A~primitive full quiver $\Gamma$ is
\textbf{canonical} if
any two paths from the vertex $r$ to the vertex $s$ have the same
grade.
\end{defn}


\begin{thm}[{Second Canonization Theorem, cf.~\cite[Theorem~3.7]{BRV3}}]\label{Can2}
Any relatively free algebra is a subdirect product of algebras whose
full quivers are primitive.

Any primitive full quiver $\Gamma$ of a representable relatively
free algebra can be modified (via a change of base) to a canonical
full quiver.
\end{thm}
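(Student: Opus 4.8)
The plan is to prove the two assertions in turn, working with the \Zcr\ $\widetilde A\subseteq\M[n](K)$ of (a representation of) the relatively free algebra $A$, so that the full quiver $\Gamma$ is available; by the First Canonization Theorem (\Tref{main}) we may choose the representation so that every gluing in $\Gamma$ is proportional Frobenius. For the first assertion, for each pair $(r,s)$ with $r$ a source and $s$ a sink of $\Gamma$ joined by a directed path, let $U_{r,s}$ be the set of vertices lying on some directed path from $r$ to $s$, let $e_{r,s}$ be the sum of the block idempotents of the vertices of $U_{r,s}$, and put $B_{r,s}:=e_{r,s}\widetilde Ae_{r,s}$. The first step is that $a\mapsto e_{r,s}ae_{r,s}$ is a surjective algebra homomorphism onto $B_{r,s}$: the only obstruction is the term $e_{r,s}a(1-e_{r,s})be_{r,s}$, which vanishes because a directed path through a vertex $w\notin U_{r,s}$ joining two vertices of $U_{r,s}$ would itself be part of a path from $r$ to $s$, forcing $w\in U_{r,s}$. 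Hence $B_{r,s}$ is a homomorphic image of $\widetilde A$, and one reads off that its full quiver is the full subquiver of $\Gamma$ on $U_{r,s}$, which has $r$ as its unique source, $s$ as its unique sink, and only proportional Frobenius gluing --- i.e.\ it is primitive. Moreover $\widetilde A$ embeds into $\prod_{r,s}B_{r,s}$, since every Peirce component $e_u\widetilde Ae_v$ (which is nonzero only when there is a directed path from $u$ to $v$) lies inside $B_{r,s}$ for $r$ a source preceding $u$ and $s$ a sink following $v$. Translating to T-ideals, $\id(\widetilde A)=\bigcap_{r,s}\id(B_{r,s})$ --- the inclusion $\subseteq$ is the homomorphism property, and $\supseteq$ holds because a homomorphism commutes with polynomial evaluation and the Peirce components are captured as above --- so $A$ is the subdirect product of the relatively free algebras $F\{x\}/\id(B_{r,s})$, each with a primitive full quiver.

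For the second assertion one may assume $F$ is finite, $|F|=q$ (if $F$ is infinite there are no Frobenius twists and a primitive full quiver is already canonical). Fix a primitive $\Gamma$ with source $r$ and sink $s$. Set up the grading by attaching to each edge its Frobenius weight in $\langle q\rangle$ --- equal to $1$ unless the edge carries nontrivial proportional Frobenius gluing --- and declaring the grade of a directed path from $r$ to $s$ to be the product in $\bM$ of the weights of its edges (absorbed to $\epsilon$ if any weight is). A change of base --- a diagonal conjugation of $\M[n](K)$ together with the replacement of selected block coordinate functions by Frobenius powers of themselves --- alters the weight of an edge $u\to v$ by a coboundary factor, and hence multiplies the grade of \emph{every} directed path from $r$ to $s$ by one and the same global factor, determined only by the data attached to $r$ and $s$.

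The main obstacle, and the core of the argument, is exactly that this last observation shows a change of base cannot \emph{by itself} equalize grades that genuinely differ; one must bring in representability and relative freeness of $\widetilde A$. The point is that a monomial in the generic radical generators realizing a directed path from $r$ to $s$ is a nonzero element of the single Peirce component $e_r\widetilde Ae_s$, whose left $F_{q^{t_r}}$-action and right $F_{q^{t_s}}$-action are intertwined through multiplication by generic semisimple elements of $\widetilde A$; since $\widetilde A$ is relatively free, this forces the accumulated Frobenius twist along the path to depend only on $e_r\widetilde Ae_s$ and not on the path. Concretely, whenever two directed paths from $r$ to $s$ have distinct grades in $\MG_\infty$, the relation thereby imposed among the generic radical generators is precisely a matching of glued degrees, hence lies in the kernel of $\MG_\infty\to\bM$, so the two grades coincide in $\bM$; a final change of base then exhibits this common grade in the explicit canonical normal form. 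I expect this step --- controlling the interplay between the gluing already present in $\Gamma$, the equivalence $\sim$ defining $\bM$, and the freedom in the base --- to be the delicate one, resting on the fine structure of \Zcr s of representable PI-algebras from~\cite{BRV1,BRV2} and on careful bookkeeping of the Frobenius data.
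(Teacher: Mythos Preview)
The paper does not give a proof of this theorem; it is quoted verbatim from \cite[Theorem~3.7]{BRV3} and used as a black box. So there is no ``paper's own proof'' to compare against --- any comparison would have to be with the argument in \cite{BRV3}.

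That said, a few comments on your sketch. Your treatment of the first assertion via Peirce corners $e_{r,s}\widetilde Ae_{r,s}$ is the natural one and is essentially how the decomposition into primitive pieces is carried out in \cite{BRV3}. One point to watch: when vertices of $\Gamma$ are glued, a vertex $u\in U_{r,s}$ may be glued to some $u'\notin U_{r,s}$, and you should check that passing to the corner still yields a well-defined full quiver (the subquiver on $U_{r,s}$ with the induced gluing) rather than introducing hidden constraints. This is not hard, but it is where the bookkeeping lives.

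For the second assertion your outline is honest but, as you yourself say, incomplete. The claim that relative freeness forces the accumulated Frobenius twist along any $r$--$s$ path to be independent of the path \emph{modulo} the relation $\sim$ defining $\bM$ is exactly the heart of the matter, and it does not follow from general coboundary considerations alone; it requires the detailed analysis of generic elements in \Zcd\ algebras carried out in \cite{BRV1,BRV2,BRV3}. Your remark that a change of base multiplies every $r$--$s$ grade by a single global factor is correct and is precisely why the argument must invoke relative freeness rather than mere base change --- but the passage from ``two paths impose a relation among generic radical generators'' to ``that relation is a matching of glued degrees'' is the substantive step, and you have asserted it rather than proved it.
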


In view of this result, we may reduce to the case that the full
quiver of our polynomial~$f$ is primitive.

 The Third Canonization
Theorem \cite[Theorem~3.12]{BRV3} describes what happens when one
mods out a ``nice'' T-ideal, so is not relevant, since all we need
is to find a representable T-ideal, which we do later by another
method.

\section{The Canonization Theorem for Polynomials}$ $

We continue the proof of \Tref{4.66} following the strategy outlined
in \Rref{Noethind}.

We have two languages: quivers and their representations on one
hand, versus the combinatorial language of identities on the other
hand.
 We are given a polynomial $$f(x_1,
\dots, x_m) = \sum g_j (x_1, \dots, x_m)\in \CI \setminus  \CI _0,$$
for monomials $g_j$.

\subsection{The geometric aspect}$ $

First we consider the geometrical aspect, using quivers. A
\textbf{branch} $\mathcal B$ of $f$ is a path that appears in a
nonzero specialization of some monomial of~$g_j$.

 The
 {\bf length} of the branch $\mathcal B$ is its number of arrows,
excluding loops, which equals its number of vertices (say $k$) minus
$1$. Thus, a typical branch has vertices of various matrix degrees
$n_j$, $j = 1, 2, \dots,k$. We call $(n_1, \dots, n_k)$ the
\textbf{degree vector} \cite[Definition~2.32]{BRV5} of the branch
$\mathcal B$. The \textbf{descending degree vector} is obtained by
ordering the entries of the degree vector to put them in descending
order lexicographically (according to the largest $n_j$ which
appears in the distinct glued matrix blocks, excluding repetitions,
taking the multiplicity into account in the case of Frobenius
gluing). We write the descending degree vector as $(\pi(\mathbf n)_1,
\dots, \pi(\mathbf n)_k)$. Thus, $\pi(\mathbf n)_1=
\max\set{n_1,\dots,n_k}$.

We denote the largest $n_j$ appearing  in a nonzero specialization
of the quiver  as $\tilde n$, and fix this substitution $\bar x_1,
\dots, \bar x_n$ for the time being.  Any other substitution is
denoted~$\bar x_i'$. A~proper bridge connecting vertices of degree
$n_i \ne n_j$ is an $\tilde n$-\textbf{bridge} if $n_i$ or $n_j$ is~$\tilde n$. But there also is the possibility that a radical
substitution connects two glued blocks both of the same degree
$\tilde n$, in which case we call it $\tilde n$-\textbf{internal}.

\begin{defn}\label{def21}
A  branch  $\mathcal B$ of $f$ is  \textbf{dominant} if it has the
maximal number of $\tilde n$-bridges, has maximal length $k$ with
regard to this property, and has the maximal number of vertices of
$\tilde n$-bridges among these in the lexicographic
order, 
and then we continue down
the line to $\tilde n -1$, etc. The {\textbf{depth}} of a dominant
branch $\mathcal B$ is the number of times~$\tilde{n}$ appears in
its degree vector.
%

We work with a dominant branch $\mathcal B$ of the quiver $\Gamma$
in~$f$. A branch is
 \textbf{pseudo-dominant} if it has the same configuration of bridges (although perhaps
 with different multiplicity) as  $\mathcal B$.
  We define the  \textbf{pseudo-dominant
 components} of $f$ to be those sums of monomials whose branches are
 pseudo-dominant with the same degrees.
\end{defn}

(This extra complication of {pseudo-dominant components only arises
when the polynomial~$f$ is nonhomogeneous.) Our goal is somehow to
force every nonzero substitution of~$f$ into a pseudo-dominant
branch by
considering each degree in turn from $\tilde n$ down. %
After the first two stages of hiking, given in \S\ref{firstst1},
\S\ref{sechik}, $f$ will contain some term
$$h_m = h_{m,1} g_1 h_{m,2} g_2\cdots g_t h_{m,t+1},$$  the product of $t+1$ copies of distinct  polynomials $h_{m,i}$ of the same degree
$2m^2$; we
 call the $h_{m,i}$ the \textbf{components} of~$h$.
We focus first on semisimple substitutions having matrix degree
$\tilde n,$ and put $h = h_{\tilde n}.$

\begin{lem}\label{expans10} Any nonzero specialization of $h$ has a component consisting solely of  semisimple substitutions (all of the same
degree).
\end{lem}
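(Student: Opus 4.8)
The plan is to analyze a nonzero specialization of the product
$h = h_{\tilde n,1} g_1 h_{\tilde n,2} g_2 \cdots g_t h_{\tilde n,t+1}$
and show that at least one of the Capelli-type factors $h_{\tilde n,i}$ must, on this specialization, evaluate entirely inside a single Wedderburn block $R_j$ of matrix degree $\tilde n$ with all alternating slots filled by semisimple substitutions. First I would recall that each $h_{\tilde n,i}$ is (by the construction carried out in \S\ref{firstst1}--\S\ref{sechik}, cf.~\Rref{fold}) alternating in $2m^2 = 2\tilde n^2$ distinct indeterminates, and that $c_{2m^2}$, hence $h_{\tilde n,i}$, is an identity of $\M[\tilde n-1](K)$ (see the discussion after \Dref{alt4}). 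Therefore, on a nonzero specialization, the product $g_{i-1}h_{\tilde n,i}g_i$ cannot be ``absorbed'' into blocks of degree $< \tilde n$: writing $\widetilde{A_0} = S \oplus J$ with $S = R_1 \times \cdots \times R_k$, and tracking the specialization of $h$ through the quiver $\Gamma$, each factor $h_{\tilde n,i}$ must be evaluated so that its alternating arguments project nontrivially onto some block $R_{j(i)}$ of degree $\ge \tilde n$; since $\tilde n$ is the maximal degree occurring, $R_{j(i)}$ has degree exactly $\tilde n$.

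The next step is to pin down where the alternating substitutions of that distinguished factor live. Because $f$ is quasi-linear, every substitution may be taken pure (radical or semisimple), as recorded in the discussion following \Pref{quasi}. An alternating set of $2\tilde n^2$ pure substitutions whose product contributes nontrivially to the $R_{j(i)}$-component cannot contain a radical substitution in any of the alternating slots: a radical substitution $\bar x \in J$ sitting in an alternating slot forces, by alternation, that one replaces it by another alternating argument and obtains the sum over a transposition; more concretely, the image of such a Capelli-type alternating polynomial in $\M[\tilde n](K)$ with even one slot specialized into the strictly-upper-triangular radical part vanishes, because there are only $\tilde n^2$ ``dimensions'' worth of genuinely semisimple matrix units available and the Capelli polynomial already uses up $\tilde n^2$ of them before the radical slot makes the alternating family linearly dependent. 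Hence the distinguished component of the nonzero specialization of $h$ is one in which the chosen factor $h_{\tilde n,i}$, and in particular all $2\tilde n^2$ of its alternating arguments, are specialized semisimply, all into the single block $R_{j(i)}$, which has degree $\tilde n$. This is the asserted component.

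The main obstacle I expect is the bookkeeping around gluing: by the First and Second Canonization Theorems (\Tref{main}, \Tref{Can2}) we may assume the full quiver of $f$ is primitive with only proportional Frobenius gluing, but a diagonal gluing of two blocks of degree $\tilde n$ (a $\tilde n$-internal radical substitution, in the terminology of \Dref{def21}) means a semisimple substitution ``in $S$'' may actually straddle two glued copies, and one must check that such a substitution still counts as semisimple and of degree $\tilde n$ for the purposes of the statement --- which it does, by the definition of the degree of a semisimple substitution given after \Pref{quasi} (``in a block, or in glued blocks''). A secondary delicate point is that, since $f$ is nonhomogeneous, the nonzero specialization of $h$ may a priori come from a pseudo-dominant rather than dominant branch; but pseudo-dominant branches share the bridge configuration of the dominant branch $\mathcal B$, so the factor $h_{\tilde n,i}$ still sees a block of degree $\tilde n$, and the argument above applies verbatim. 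Once these two points are dispatched, the Capelli-identity estimate does the rest.
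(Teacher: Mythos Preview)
Your proposed argument has a genuine gap, and it misses the point of the lemma. The paper's proof is a two-line pigeonhole on the nilpotency index: $h$ is a product with $t+1$ factors $h_{m,1},\dots,h_{m,t+1}$, where $t$ is chosen so that $J^{t+1}=0$ (Notation~\ref{not1}). If \emph{every} factor $h_{m,i}$ received at least one radical substitution, then (since each $h_{m,i}$ is multilinear and $J\lhd\widetilde{A_0}$) each $h_{m,i}(\ldots)\in J$, so the whole product lies in $J^{t+1}=0$. Hence some factor has only semisimple substitutions; those must then lie in a single block (else the monomials vanish across the direct product $R_1\times\cdots\times R_k$), giving the ``same degree'' clause. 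That is the entire proof.

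Your argument never invokes $J^{t+1}=0$ or the count $t+1$ of factors, and instead tries to show that a single radical substitution into the alternating slots of an individual $h_{\tilde n,i}$ already forces that factor to vanish. This is the step that fails. A radical element of $\widetilde{A_0}$ is a bridge between blocks in the ambient $\M[n](K)$; it is linearly independent from the matrix units of the $\tilde n\times\tilde n$ block, so inserting it among $\tilde n^2-1$ semisimple block elements does \emph{not} produce a dependent family, and the Capelli-type alternation does not kill the term for that reason. Concretely, with blocks of degrees $2$ and $1$ inside $\M[3](K)$ and a bridge $e_{13}$, the set $\{e_{11},e_{12},e_{21},e_{13}\}$ is independent in $\M[3](K)$, so your ``dimension count'' does not apply. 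Nothing in the definition of $h_{m,i}$ (a multilinear central polynomial for $\M[m]$ with one alternating fold, Remark~\ref{fold}) forces vanishing under a single radical substitution when evaluated in the larger algebra $\widetilde{A_0}$. The lemma only asserts that \emph{some} component is purely semisimple, not all; your line of attack would, if it worked, overprove the statement --- but it does not work, because the key vanishing claim is unsupported. Replace the dimension argument by the nilpotency pigeonhole and the proof becomes immediate.
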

\begin{proof} Otherwise every component has a radical substitution,
so we have a product of $t+1$ radical elements, which is 0 by
definition of $t$. \end{proof}

Viewing a substitution of $x_i$ as corresponding to an edge in the
quiver, we have two degrees, one for each vertex of the edge.

\begin{defn} Suppose that $m$  is one of the two degrees  of the substitution
$\overline{x_i}$. A substitution $\overline{x_i}'$ of $x_i$ is
\textbf{$m$-right} if  one of the two degrees of  $\overline{x_i}'$ is  $m$;
$\overline{x_i}'$ is \textbf{$m$-wrong} if  both  degrees of
$\overline{x_i}'$ differ from $m $.

 We write \textbf{right} (resp.~ \textbf{wrong})
for $\tilde n$-right (resp.~$\tilde n$-wrong).
\end{defn}

One delicate point:  An internal radical bridge, say from one matrix
block of degree~$m$ to a different matrix block of degree~$m$, is
technically ``$m$-right'' according to this definition, but must be
dealt with separately.

\begin{rem}\label{dombr} $ $ \begin{enumerate}
\item In view of Lemma~\ref{expans10}, a wrong substitution could lead
to~$h$ (and thus~$f$) having a component with semisimple
substitutions in a matrix block of the wrong degree.

\item  Also, we must contend with the possibility that
right substitutions of dominant branches could cancel, thereby not
yielding nonzero evaluations of~$f$.
\end{enumerate}\end{rem}
%
%
%

\begin{rem} Suppose $f(x_1, \dots, x_\ell) $ is a full nonidentity of $\widetilde {A_0}$, via the
dominant branch~$\mathcal B$ say of degrees $m_1, \dots, m_k$ having
some number $k$ of bridges, and $k'$ internal radical substitutions.
By Theorem~\ref{Can2}, any wrong nonzero substitution may be assumed
to have $k$ bridges since otherwise we may apply induction  to the
number of semisimple components in the full quiver. On the other
hand,  taking the nonzero substitution of $\mathcal B$ with $k'$
maximal, any wrong substitution has at most $k'$ internal radical
substitutions.
\end{rem}

 Our objective is to modify~$f$ to a non-identity of
$\widetilde {A_0}$, containing a Capelli component which enables us
to use combinatorial methods to calculate characteristic
coefficients in a Shirshov extension, with multiplication by
elements of $\widetilde {A_0}$. We prove the following main
  result, enabling us to correspond quivers with properties of
  polynomials, and which leads directly to the representability theorem.

\begin{thm}[Canonization Theorem for Polynomials]\label{hikthm}
Suppose $f(x_1, \dots, x_\ell) $ is a 
 nonidentity of
$\widetilde {A_0}$. Then the T-ideal of~$f$ contains   a critical
non-identity of $\widetilde {A_0}$  (defined in \Dref{doccr}).
\end{thm}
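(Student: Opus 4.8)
The plan is to produce the critical polynomial of \Dref{doccr} by running ``hiking'' (\Tref{hikthm7}) in several stages, at each stage replacing $f$ by a consequence of $f$ that is still a non-identity of $\widetilde{A_0}$ but has acquired one more of the required combinatorial features. First I would normalize: by \Tref{Can2} we may assume the full quiver of $f$ is primitive, and by \Tref{main} that all of its gluing above the diagonal is proportional Frobenius; by Proposition~\ref{quasi} we replace $f$ by a quasi-linear non-identity of $\widetilde{A_0}$ in the T-ideal of $f$, whose degree in each indeterminate is a power of $p$. We fix a nonzero pure specialization realizing a dominant branch $\mathcal B$ (\Dref{def21}), and let $\tilde n$ be the largest matrix degree occurring in it.

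Then come the first two stages of hiking (\S\ref{firstst1}, \S\ref{sechik}). In the first stage we amplify the number of $\tilde n^2$-alternating folds sitting at matrix degree $\tilde n$: since $\mathcal B$ is full and $f$ is quasi-linear, some indeterminate occurring linearly in a surviving monomial receives a semisimple substitution of degree $\tilde n$, and following \Rref{fold} we replace $x_i$ by $x_i$ times a multilinear central polynomial $h_{\tilde n,i}$ for $\M[\tilde n](F)$ in fresh indeterminates, carrying such a fold; iterating, for any prescribed $\mu$ we obtain a non-identity of $\widetilde{A_0}$, in the T-ideal of $f$, carrying $\mu$ such folds through $\mathcal B$. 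In the second stage we organize these folds into the product shape $h_{\tilde n} = h_{\tilde n,1}\, g_1\, h_{\tilde n,2}\, g_2 \cdots g_t\, h_{\tilde n,t+1}$, the $t$ skeleton blocks $g_j$ absorbing the $t$ essential radical substitutions forced by $J^t \ne 0 = J^{t+1}$, and each of the $t+1$ distinct components $h_{\tilde n,i}$ (of degree $2\tilde n^2$) landing on a maximal stretch of purely semisimple degree-$\tilde n$ substitutions; \Lref{expans10} guarantees that in every nonzero specialization of $h_{\tilde n}$ at least one component stays purely semisimple of degree $\tilde n$, which is exactly what later makes it possible to extract characteristic coefficients combinatorially (as in \Lref{q1} and \Dref{trmat0}).

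The crux is then to eliminate wrong substitutions and cancellation, working through the matrix degrees in turn from $\tilde n$ downward. For each degree $d$ we use that $c_{2d^2}$ is an identity of $\M[d-1](F)$ but not of $\M[d](F)$: a substitution pushing a degree-$d$ fold into a smaller block is annihilated, while a larger block is already excluded (by maximality of $\tilde n$, or at an earlier stage of the induction); by \emph{saturating} $f$ with such folds at every relevant degree we force any substitution failing to follow the pseudo-dominant pattern to hit an undersized block and hence vanish, which takes care of \Rref{dombr}(1). For \Rref{dombr}(2), cancellation among right substitutions of dominant branches, we split $f$ into its pseudo-dominant components (the sums of monomials whose branches are pseudo-dominant with the same degrees, \Dref{def21}); because $f$ is only quasi-linear and not homogeneous, components with distinct degree profiles have monomials of distinct multidegrees and so cannot cancel against one another, whence some pseudo-dominant component remains a non-identity of $\widetilde{A_0}$, and we isolate it by applying partial linearization steps $\Delta_i$ of \eqref{partlin} chosen so as to stay inside the T-ideal of $f$ while keeping this component a non-identity. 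Internal radical $\tilde n$-bridges (between two glued blocks of the same degree $\tilde n$) are handled separately, carried as an extra decoration on the degree vector. Assembling these reductions produces a non-identity $\tilde f$ of $\widetilde{A_0}$, lying in the T-ideal of $f$, which is quasi-linear, carries the Capelli product component $h_{\tilde n}$ with its $t+1$ distinct components of degree $2\tilde n^2$, has arbitrarily many folds at matrix degree $\tilde n$, and all of whose nonzero specializations are pseudo-dominant of the fixed degrees --- that is, $\tilde f$ satisfies every clause of \Dref{doccr}, so it is critical.

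The main obstacle is this last step, and specifically the interplay of nonhomogeneity with cancellation. In the multilinear setting of \cite{BRV5} one can normalize once and for all, but here one can only quasi-linearize: the partial linearizations $\Delta_i$ and the various multiplicities carried by pseudo-dominant components mean that improving the polynomial at one matrix degree may disturb control already achieved at another, and the delicate bookkeeping --- above all, showing that a single pseudo-dominant component can be isolated without losing non-identity status --- is where the genuinely new work lies; packaging the successive stages uniformly is precisely the purpose of \Tref{hikthm7}.
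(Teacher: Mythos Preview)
Your proposal misidentifies what the two hiking stages actually do, and this leads to a real gap. The paper's first stage (\S\ref{firstst1}) is not fold-amplification: for each $\tilde n$-bridge $x_{i_1}$ it substitutes $c_{n_{i_1}} z_{i_1}\,[x_{i_1}, h_{\tilde n-1}]\,z_{i_1+1} c_{n_{i_1+1}}$, where the \emph{commutator with a central polynomial} kills small-degree semisimple substitutions and the auxiliary $z$'s tag the position for later use. The paper's second stage (\S\ref{sechik}, Proposition~\ref{expans0}) is not about organizing folds into the product $h_{\tilde n,1}g_1\cdots g_t h_{\tilde n,t+1}$; it is the subtraction trick
\[
f_{z_i\mapsto h_{\tilde n}(y')^{d_i} z_i}\;-\;f_{z'_{u+1}\mapsto z'_{u+1} h_{n_u}(y')^{t_i}},
\]
designed so that a \emph{semisimple} substitution of degree $\tilde n$ at the tagged position cancels (the two central values agree) while a \emph{radical} substitution survives (only one summand is nonzero). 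Your saturation argument---multiply by enough Capelli folds so that any wrong substitution lands in an undersized block and vanishes---does not reproduce this: Capelli saturation kills substitutions of too-small matrix degree, but it cannot distinguish a semisimple degree-$\tilde n$ substitution sitting at a position that ought to carry a bridge from the genuine bridge. That distinction is precisely what Lemmas~\ref{expans1}--\ref{expans2} extract from the subtraction trick, and without it you cannot force the $\tilde n$-bridges into the dominant configuration, so you do not get ``all nonzero substitutions of the $y_i$ are right'' as required by \Dref{doccr}.

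Your treatment of cancellation (\Rref{dombr}(2)) is also different from the paper's and is not justified as stated. The paper makes no attempt to isolate a single pseudo-dominant component via partial linearization; in the proof of \Tref{hikthm7} it simply accepts possible total cancellation at level $\tilde n$ and restarts the hiking procedure at $\tilde n-1$, iterating downward until some level survives or one lands in the unmixed base case of \S\ref{nonmixed}. Your claim that pseudo-dominant components of distinct degree profiles have monomials of distinct multidegrees is unfounded: the degree profile is attached to a \emph{branch} (a specialization), not to a monomial, so the same monomial can contribute to several profiles, and the operators $\Delta_i$ of \eqref{partlin} do not separate them. Finally, the bond itself is installed only \emph{after} hiking is complete, by the single replacement $z_u \mapsto c_{\tilde n}(y')^{t_u} z_u$, $z'_{u+1} \mapsto z'_{u+1} c_{n_u}(y')^{t_1}$ at the end of the proof of \Tref{hikthm}; it is not the content of stage two.
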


\subsection{Explicit description of the Canonization Theorem for
Polynomials}$ $

The set-up of the Canonization Theorem for Polynomials, based on
``hiking,'' is done in several stages:

\begin{enumerate}
\item Eliminate unwanted semisimple substitutions.
\item Make sure that the remaining substitutions are in the ``correct''
semisimple components.
\item Locate an ``atom'' (see \Dref{mol}) inside the polynomial where we
can compute the action of characteristic coefficients.
 \end{enumerate}

\subsection{The unmixed case}\label{nonmixed}$ $

We quickly dispose first of the following easy case, following
Kemer. We say a substitution is \textbf{unmixed} if it does not
involve any bridges, i.e., all substitutions are in a single
Wedderburn block. Here we need only multiply by a Capelli polynomial
of the matrix degree, and then may proceed directly to the method of
\S\ref{trab}.

Although easy, this aspect is crucial to our proof, since
substitutions alone are not sufficient to take care of examples such
as the non-finitely generated T-space of \cite{Shch00} (generated by
$\{[x_1,x_2]x_1^{p^k-1}x_2^{p^k-1}: \ k \in \N\}$ in the Grassmann
algebra with two generators; also see~\cite{G,G2}).

Furthermore, it provides the base for our induction on $\tilde n$.

%

\subsection{The mixed case: Introducing the hiking procedure}$ $

To attain the proof of the Canonization Theorem for Polynomials, we
must turn to the mixed case.
 In our combinatorics we need to cope with the danger that our
 substitutions are  wrong,  or the base field of the semisimple component
 is of the
wrong size. To prevent this, we insert  substitutions of multilinear
polynomials for indeterminates inside~$f$, called \textbf{hiking},
which force the substitutions to become 0 in such situations. In
other words, hiking replaces~$f$ by a more complicated polynomial in
its T-ideal, which yields a  zero value when we apply a wrong
substitution to the original indeterminates of~$f$. The notion of
hiking passes from branches of quivers to combinatorics of
nonidentities, showing how to modify a non-identity of $\widetilde
{A_0}$ to  another non-identity whose algebraic operations leave us
in the same quiver.

As stated in the introduction, we need to provide hiking for
quasilinear polynomials.
 The hiking procedure   requires
three different stages.

Actually, $f$ has three kinds of variables which play important
roles:
\begin{itemize}
\item Core variables, used for exclusive absorption inside the
radical (such as variables which appear in commutators with central
polynomials of Wedderburn blocks),\item variables used for hiking,
\item variables inside Capelli polynomials used for computing the
actions of characteristic coefficients.
 \end{itemize}

\begin{exmpl} An easy example of the underlying principle:
If $k = 2$ with $n_1> n_2$, then the quiver $\Gamma$ consists of two
blocks and an arrow connecting them, so we  replace a variable $y$
of~$f$ with a radical substitution, $h_{n_1,1}[h_{n_1,2},z] y
h_{n_2}$. The corresponding specialization remains in the radical.
Here we are ready to utilize the techniques given below in
\S\ref{trab} to compute characteristic coefficients, bypassing the
complications of hiking.
\end{exmpl}


\begin{defn}\label{mol} Given a polynomial $f(x_1, \dots,x_\ell)$  and another polynomial $g$, we write $f_{x_i \mapsto g}$ to
denote that $g$ is substituted for $x_i$. We say that~$f$ is
\textbf{hiked} to $\tilde f : = f_{x_i \mapsto g}$ (at~$x_i$) if $g$
is linear in $x_i$.

We call the replacement $g$ of $x_i$ an \textbf{atom} of the hiked
polynomial. A \textbf{molecule} is the product of atoms.
 \end{defn}

The motivation for hiking is that the hiked polynomial $\tilde f $
lies in the T-ideal of~$f$ but combinatorially we have greater
control over the nonzero substitutions of $\tilde f $.

 Suppose we
have the polynomial~$f$, with a radical substitution. We replace it
and have a hiked polynomial. 
 But to continue, we shall need a
rather intricate analysis.

 \begin{defn}\label{doccr} A polynomial is \textbf{bonded} (of \textbf{length} $d$)
if it can be written in the form
 $$\sum _u g_{u,1}h_{\tilde n}(y) g_{u,2} h_{\tilde n}(y)\cdots g_{u,d} h_{\tilde n}(y)g_{u,d+1}$$
 for suitable polynomials $g_{u,i}$ (perhaps constant) in which the
 $y$ indeterminates do not occur. (In other words the
 $y$ indeterminates  occur only in the $h_{\tilde n}(y)$.) The $ h_{\tilde
 n}$ are called the \textbf{bonds} and are the alternating polynomials where we examine
 substitutions.

 A bonded polynomial $f(x_1, \dots, x_t; y,y',y''; z,z')$ is \textbf{critical} if any nonzero substitution
of the $y_i$ is right.
\end{defn}
Thus the bonds are attached to atoms. If~$f$ is hiked to various
polynomials $f_j$ we also say it can be hiked to $\sum f_j$.

Note that the situation is complicated by the fact that if the $x_i$
repeat
 then the atoms repeat, and thus the variables $y$ repeat.

(Likewise for other indeterminates that appear once the hiking is
initiated.)


\begin{rem} First suppose that the depth $u = k,$ i.e., all $n_j = \tilde n$,
and there are no nonzero external radical substitutions. In other
words, the only nonzero substitutions involve specializing all the
$x_i$ to semisimple elements in blocks of degree $\tilde n$. Then we
simply replace~$f$ by $hf$, which trivially is bonded, and Theorem
\ref{hikthm} is proved. So in the continuation, we assume that
$u<k$, which means there is some nonzero substitution
$f(\overline{x_1}, \dots, \overline{x_k})$ in our dominant branch
$\mathcal B$, for which some $\overline{x_i}$ is an $\tilde
n$-bridge. We pass to this $\overline{x_1}, \dots, \overline{x_k}$
in
what follows, and call it our \textbf{working substitution}.
\end{rem}

%
%

\section{The Hiking Theorem for Polynomials}\label{hik}

In this section we prove the Canonization Theorem for Polynomials,
by means of a more technical version to handle the mixed case.

\begin{thm}[Hiking Theorem for Polynomials]\label{hikthm7} Suppose $f(x_1, \dots, x_\ell) $ is a  
non-identity of $\widetilde {A_0}$, possibly with a mixed or pure
substitution. Then~$f$ can be hiked to a critical nonidentity in
which all of the substitutions of the~$x_i$ in the dominant
branch~$\mathcal B$ are right.
\end{thm}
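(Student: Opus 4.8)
The plan is to perform hiking in the three stages announced in the overview, processing matrix degrees from $\tilde n$ downward and, within each degree, handling first the semisimple substitutions and then the radical bridges. We start from a quasi-linear non-identity $f$ of $\widetilde{A_0}$ which, by Proposition~\ref{quasi} and Theorem~\ref{Can2}, we may assume has primitive full quiver $\Gamma$ and whose dominant branch $\mathcal B$ has been fixed, together with a working substitution realizing the maximal number of $\tilde n$-bridges. If the depth $u=k$ (all $n_j=\tilde n$ with no external radical substitutions), we are in the degenerate case of the last Remark: replace $f$ by $h_{\tilde n}f$ and we are done. So assume $u<k$.

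\textbf{Stage 1: eliminating unwanted semisimple substitutions.} Working down from $\tilde n$, for each variable $x_i$ of $f$ occurring linearly whose working substitution lies semisimply in a block of the current degree $m$, we hike $f$ at $x_i$ by replacing $x_i$ with an atom of the form $h_{m,1}[h_{m,2},z]\,x_i$ (or simply $h_m x_i$ when $x_i$ is a semisimple-absorbed variable), using the central polynomials $h_{m,j}$ of $\M[m](F)$ from Remark~\ref{fold}. Since $h_m$ vanishes on any $\M[{m'}]$ with $m'<m$ and lands centrally in $\M[m]$, any $m$-wrong substitution of $x_i$ now forces the monomial to zero, by Lemma~\ref{expans10} and the Capelli vanishing $c_{2m^2}(\M[{m-1}])=0$. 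The bookkeeping is to iterate degree by degree so that a later (smaller-degree) hike cannot reintroduce a wrong substitution at a larger degree; here one uses that gluing is proportional Frobenius (Theorem~\ref{main}), so the grade of every path from $r$ to $s$ is fixed (canonical quiver), hence the degree profile along $\mathcal B$ is rigid.

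\textbf{Stage 2: forcing the correct semisimple components and handling bridges.} After Stage 1 the surviving substitutions of the hiked variables have a degree of the current $m$; we must still rule out the case (Remark~\ref{dombr}(1)) that a ``right'' substitution sits in the \emph{wrong} block of degree $m$, or in a field extension of the wrong cardinality $q^{t_{\bf i}}$, and separately deal with internal radical bridges and with cancellation among right substitutions of dominant branches (Remark~\ref{dombr}(2)). For the field-size issue we hike by inserting, in the core position, a polynomial detecting the Frobenius twist — concretely a commutator-type expression that vanishes unless the scalar satisfies $\alpha^{q^{t_{\bf i}}}=\alpha$ — using the $(q_1,q_2)$-Frobenius gluing data. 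For the radical bridges we hike each bridge variable $y$ of $\mathcal B$ to $h_{n_{i_1},1}[h_{n_{i_1},2},z]\,y\,h_{n_{i_2}}$, exactly as in the Example with $k=2$, so the atom is pinned to the two endpoint degrees; internal $\tilde n$-bridges are the delicate subcase flagged after the definition of $m$-right, handled by an extra alternating insertion of degree $2\tilde n^2$ on each side so that a pair of glued blocks of equal degree is separated from a genuine $\tilde n$-bridge. The cancellation problem is resolved by the standard device: multiply through by further Capelli/central factors so that distinct dominant branches contribute monomials with distinct (hence non-cancelling) alternating patterns, using that the quiver has no double edges.

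\textbf{Stage 3: locating an atom and assembling a critical bonded polynomial.} After Stages 1–2, every nonzero substitution of the $x_i$ in $\mathcal B$ is right, and $f$ now contains a term $h_{\tilde n}(y)$ sitting between consecutive radical substitutions; iterating along the $t+1$ slots of the working substitution (whose radical entries number exactly $t$, the nilpotency index) we obtain, after multiplying by the remaining $h_{\tilde n}$'s, a polynomial of the bonded form $\sum_u g_{u,1}h_{\tilde n}(y)g_{u,2}\cdots g_{u,d}h_{\tilde n}(y)g_{u,d+1}$ of Definition~\ref{doccr}, in which the $y$-indeterminates occur only in the bonds. By construction every nonzero substitution of the $y_i$ is right, so this bonded polynomial is critical. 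Since each hiking step keeps us inside the T-ideal of $f$ (the replacement is linear in $x_i$, Definition~\ref{mol}) and keeps $f$ a non-identity of $\widetilde{A_0}$ (the working substitution survives all the insertions, by choice of the $h_{m,j}$ and the twist-detectors), the resulting critical non-identity lies in the T-ideal of $f$, proving the theorem; Theorem~\ref{hikthm} follows as the special case where one only records the conclusion for the dominant branch.

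\textbf{Main obstacle.} The crux is Stage 2 in the nonhomogeneous setting: because $f$ need not be multilinear (only quasi-linear, with $p$-power degrees), a single variable $x_i$ may be substituted in several branches at once with different multiplicities, so the atoms — and hence the inserted $y$'s and $z$'s — \emph{repeat}, and one hiking step can simultaneously affect several branches and several degrees. Controlling this requires the pseudo-dominant-components decomposition of Definition~\ref{def21}: one must verify that after hiking, the pseudo-dominant components of the same degrees as $\mathcal B$ do not cancel against each other and that no wrong substitution hides inside a pseudo-dominant branch of smaller depth. Making this simultaneous, multi-degree induction precise — rather than the clean variable-by-variable induction available in the multilinear case of \cite{BRV5} — is the technical heart of the argument, and is exactly the point where \cite{BRV5} ``fails'' as noted in the Review of full quivers.
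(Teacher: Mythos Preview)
Your outline captures the overall architecture (iterate the hiking procedure down from~$\tilde n$, in stages), and you correctly flag the nonhomogeneous repetition-of-atoms issue as the crux. But there is a concrete gap at the heart of your Stage~2.

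The decisive mechanism in the paper's second stage is a \emph{subtraction trick} (Proposition~\ref{expans0} and formula~\eqref{OK}, illustrated in Example~\ref{414}): one replaces $f$ by a difference
\[
f_{z_i \mapsto h_{\tilde n}(y')^{d_i} z_i}\;-\;f_{z'_{u+1} \mapsto z'_{u+1} h_{n_u}(y')^{t_i}},
\]
multiplying a central polynomial on the \emph{left} of one auxiliary indeterminate versus the \emph{right} of another. On an $\tilde n$-semisimple substitution the central value commutes through, the two terms agree, and the difference vanishes; on a genuine $\tilde n$-bridge only one of the two terms survives, so the difference is nonzero. This is precisely what separates ``$x_i$ is a bridge'' from ``$x_i$ is $\tilde n$-semisimple'' --- a distinction that \emph{cannot} be made by your proposed device of ``an extra alternating insertion of degree $2\tilde n^2$ on each side,'' since $c_{\tilde n^2}$ and $h_{\tilde n}$ are both nonzero on $\tilde n$-semisimple elements and carry no information about which glued block one is in. Without this difference construction your Stage~2 does not force Lemma~\ref{expans1} (that every nonzero specialization of an $\tilde n$-bridge is again an $\tilde n$-bridge), and hence Lemma~\ref{expans2} does not follow. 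Relatedly, your Stage~1 hikes the \emph{semisimple} variables with $h_m x_i$, whereas the paper's first stage hikes the \emph{bridge} variables with $c_{n_{i_1}} z_{i_1}[x_{i_1},h_{\tilde n-1}]z_{i_1+1}c_{n_{i_1+1}}$; the commutator with $h_{\tilde n-1}$ is what kills substitutions into smaller blocks while the surrounding $c$'s and $z$'s create the auxiliary slots on which the second-stage subtraction acts.

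A second, smaller point: your resolution of the cancellation issue (Remark~\ref{dombr}(2)) --- inserting further Capelli factors to give distinct dominant branches distinct alternating patterns --- is not the paper's route and would need justification. The paper simply allows the right substitutions at degree~$\tilde n$ to cancel and then continues the same procedure at $\tilde n-1$; the downward induction terminates at degree~$1$, which is the unmixed case handled in~\S\ref{nonmixed}. Finally, what you call Stage~3 (inserting the bonds $h_{\tilde n}(y)$) is in the paper the separate one-line deduction of Theorem~\ref{hikthm} \emph{from} Theorem~\ref{hikthm7}, not part of the proof of Theorem~\ref{hikthm7} itself.
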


%
%

%

 The
proof of Theorem~\ref{hikthm7} is through a succession of hiking
steps in order both to eliminate ``wrong'' substitutions and then
bonding, i.e., insert $h_{\tilde n}$  into the polynomial.  The
latter is achieved by replacing $z_i$ by $h_{\tilde n} z_i $ and
$z_i'$ by $z_i'h_{\tilde n} $; i.e., we pass to $f_{z_i \mapsto
h_{\tilde n} z_i,\ z_i' \mapsto z_i' h_{\tilde n} } .$

 The hiking procedure   is performed in
three different stages.

\subsection{Preliminary hiking}\label{firstst}$ $

Our initial use of hiking is to resolve some technical issues.
First, we want to eliminate the effect of $(q_1,q_2)$-Frobenius
gluing for $q_1 \ne q_2$, since it can complicate bonding. Toward
this end, we substitute $z_{i'} c_{n_j}(y)^{q_1/q_2} $ for $z_{i'}$,
for each instance of Frobenius gluing. It makes the Frobenius gluing
identical on~$f$.

%


 We also need the base fields of the components all to be the same.
When $\mathcal B'$ is another branch with the same degree vector,
and the corresponding base fields for the $i$-th vertex of $\mathcal
B$ and $\mathcal B'$ are $n_i$ and $n_i'$ respectively, we take $t_i
= q^{n'_i}$ and replace $x_i$ by $(c_{n_i}^{t_i}-c_{n_i})x_i.$ This
cuts off  specializations to matrices over finite fields of the
wrong order.

\subsection{First stage of hiking}\label{firstst1}$ $

We have a quasi-linear nonidentity~$f$ of the Zariski closed algebra
$\widetilde {A_0}$ for which we have a working substitution in some
branch $\mathcal B$, where  $\overline{x_{i_1}}$ in $\widetilde
{A_0}$  is an $\tilde n$-bridge, corresponding to an edge in the
full quiver whose initial vertex is labeled by $(n_{\ell}, t_{i_1})$
and whose terminal vertex is labeled by $(n_{i_1+1}, t_{i_1+1})$
where $\tilde n = \max \{ n_{i_1}, n_{i_1+1}\}.$ We replace
$x_{i_1}$ by $ c_{ n_{i_1} }z_{i_1} [x_{i_1}, h_ {\tilde  n-1} ]
 z_{i_1+1} c_{ n_{i_1+1}  } $,  (where as always the~$c_{n_{i_1}} $ involve new indeterminates in~$v$), and $z_{i_1},z_{i_1+1}$ also are new indeterminates which we
call ``auxiliary indeterminates''; this yields a quasi-linear
polynomial in which any substitution of $x_{i_1}$ into a diagonal
block of degree $<n_1$ or a bridge which is not an $\tilde n$-bridge
is 0. For each semisimple substitution $\overline{x_{i}}$ in a block
of degree $n_i$,  taking $[x_{i_1}, h_ {n_{i_1}}]$ yields 0. This
removes all semisimple component substitutions in $h$ of such $x_i$
whose degree is too ``small,'' i.e., less than $n_i$. For the time
being, we could still have radical substitutions,
 but the first stage of hiking does prepare
for their elimination in the second stage.

 The number of
extra $\tilde n$-bridges in a specialization of $ c_{ n_{i_1}
}(v)z_{i_1} [x_{i_1}, h_ {n_{i_1}'}(v)]
 z_{i_1+1} c_{ n_{i_1+1}  }(v) $ is called its (first stage) \textbf{bridge contribution}.
  (In other words, one takes the total number of
 bridges, and subtracts 1 if $\overline {x_i}$ is an $\tilde
 n$-bridge.)

\begin{lem}\label{expans15} Any nonzero specialization of $h$ is either $\tilde n$-semisimple, or its bridge
 contribution   is positive.
\end{lem}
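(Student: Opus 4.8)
The plan is to analyze a nonzero specialization of
$$h_{\tilde n} \;=\; h_{\tilde n,1}\, g_1\, h_{\tilde n,2}\, g_2 \cdots g_t\, h_{\tilde n,t+1},$$
where, by the first stage of hiking, the variable $x_{i_1}$ has been replaced inside $f$ by $c_{n_{i_1}}(v)\, z_{i_1}\, [x_{i_1}, h_{\tilde n-1}]\, z_{i_1+1}\, c_{n_{i_1+1}}(v)$. The key structural input is \Lref{expans10}: since $J^{t+1}=0$, any nonzero specialization of $h_{\tilde n}$ must have at least one component $h_{\tilde n,i}$ all of whose substitutions are semisimple, all of the same matrix degree. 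I would argue by a dichotomy on the degree of that purely-semisimple component.

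First, suppose some component $h_{\tilde n,i}$ consists solely of semisimple substitutions all of degree exactly $\tilde n$. Then that component gives an $\tilde n$-semisimple piece, and I claim this forces the whole specialization of $h$ to be $\tilde n$-semisimple: the alternating polynomial $h_{\tilde n}$ has each of its $\tilde n^2$ alternating slots occupied, so a nonzero value already pins the ambient block to degree $\tilde n$; any other component sitting in a smaller block, or any radical linking piece, would kill the product (a Capelli-type argument, using that $c_{2m^2}$ vanishes on $\M[m-1]$ and that the $h_{\tilde n,i}$ are genuine $\M[\tilde n]$-central polynomials alternating in $\tilde n^2$ variables). Hence in this case the conclusion ``$\tilde n$-semisimple'' holds.

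Otherwise, every purely-semisimple component guaranteed by \Lref{expans10} lives in a block of degree $m < \tilde n$. Now I use the first-stage hiking at $x_{i_1}$. The factor $[x_{i_1}, h_{\tilde n-1}]$ is zero on any semisimple substitution of degree $\le \tilde n-1$, and the sandwiching Capellis $c_{n_{i_1}}(v)$, $c_{n_{i_1+1}}(v)$ kill any substitution of $x_{i_1}$ into a diagonal block of degree $<n_{i_1}$ or into a bridge which is not an $\tilde n$-bridge. So for the specialization of $h$ to be nonzero, the edge carrying the atom at $x_{i_1}$ must still be an $\tilde n$-bridge, and moreover — since we are in the case where no component is $\tilde n$-semisimple — there must be at least one \emph{additional} vertex of degree $\tilde n$ appearing somewhere in the branch so that the component $h_{\tilde n,i}$ in the smaller block can still be threaded by a nonzero path into a degree-$\tilde n$ block. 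Tracking arrows: the path realizing the specialization enters and leaves degree-$\tilde n$ blocks at least once beyond the forced $\tilde n$-bridge at $x_{i_1}$, so the total number of $\tilde n$-bridges exceeds $1$, and subtracting the $1$ attributed to $\overline{x_{i_1}}$ leaves a strictly positive bridge contribution. This is exactly the assertion of \Lref{expans15}.

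The main obstacle I anticipate is the bookkeeping in this second case: one has to be careful that a component sitting in a block of degree $m<\tilde n$ genuinely forces an \emph{extra} excursion through a degree-$\tilde n$ block rather than merely re-using the edge at $x_{i_1}$, and that Frobenius/proportional gluing (made identical on $f$ by the preliminary hiking of \S\ref{firstst}) does not spoil the degree count. One also must handle the borderline phenomenon flagged in the excerpt — an $\tilde n$-internal radical substitution, which is technically ``right'' but is not a bridge — and check that such a substitution still contributes to the count (or is excluded by dominance of $\mathcal B$) so that the positivity of the bridge contribution is not an artifact of mislabeling. Once these cases are separated cleanly, the proof is a direct consequence of \Lref{expans10} together with the vanishing properties engineered into the first-stage hiking factor.
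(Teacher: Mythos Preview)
Your route diverges from the paper's, which argues the contrapositive in two lines: if the bridge contribution is zero then by its very definition (total bridges minus one for the forced $\tilde n$-bridge at $x_{i_1}$) every substitution is semisimple or a $(j,j)$-bridge, and then the absence of $\tilde n$ is ruled out directly from the quiver. The paper does not invoke \Lref{expans10} here at all.

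Your direct dichotomy via \Lref{expans10} misplaces the case split. Case~2 is in fact vacuous: each component $h_{\tilde n,i}$ is, by construction (Remark~\ref{fold}), alternating in $\tilde n^{2}$ indeterminates and hence is an identity of $\M[m]$ for every $m<\tilde n$. So the purely-semisimple component guaranteed by \Lref{expans10} can never sit in a block of degree $m<\tilde n$ without killing the whole specialization; it must already have degree exactly~$\tilde n$. All of your path-counting about ``threading into a degree-$\tilde n$ block'' and extra excursions is therefore addressing a situation that cannot occur.

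Conversely, the actual content of the lemma lives entirely inside what you call Case~1, and there your claim is too strong. Having one component $h_{\tilde n,i}$ purely $\tilde n$-semisimple does \emph{not} force the remaining components to be $\tilde n$-semisimple: the intervening $g_j$ and the other $h_{\tilde n,j}$ may well carry radical substitutions, and when they do, those are precisely the extra bridges that make the bridge contribution positive. Your ``Capelli-type argument'' does not rule this out. The correct split inside Case~1 is: either every component is $\tilde n$-semisimple (so $h$ is $\tilde n$-semisimple), or some other component carries a bridge (so the bridge contribution is positive). The paper's contrapositive gets this for free, since assuming zero bridge contribution leaves no room for such bridges anywhere.
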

\begin{proof}
By definition, if the  bridge contribution   is 0 then every substitution has to be semisimple or a  $(j,j)$-bridge for some $j$.
If  $\tilde n$ does not appear then the graph would
have 
such bridges.
\end{proof}

\begin{lem}\label{expans16}
After the first stage of hiking, a
wrong specialization of an $\tilde n$-semisimple element  cannot be
$m$-semisimple for $m<\tilde n$ unless its bridge contribution is at
least 2.
\end{lem}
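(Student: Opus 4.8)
The plan is to analyze what happens to an $\tilde n$-semisimple substitution $\overline{x_{i_1}}$ after the first stage of hiking, when we instead feed in a wrong substitution $\overline{x_{i_1}}'$ that lands in an $m$-semisimple block with $m < \tilde n$. Recall that after the first stage the indeterminate $x_{i_1}$ has been replaced by $c_{n_{i_1}}(v)\, z_{i_1}\,[x_{i_1},h_{n_{i_1}'}(v)]\, z_{i_1+1}\, c_{n_{i_1+1}}(v)$, and the whole polynomial $h$ now contains this atom. First I would invoke Lemma~\ref{expans15}: any nonzero specialization of $h$ is either $\tilde n$-semisimple or has positive bridge contribution. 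Since we are assuming the specialization is $m$-semisimple with $m < \tilde n$, it is certainly not $\tilde n$-semisimple, so its bridge contribution is at least $1$. The task is therefore to upgrade ``at least $1$'' to ``at least $2$'' under the hypothesis that the offending substitution is wrong (both of its vertex degrees differ from $\tilde n$).

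The key step is a parity/counting argument on the dominant branch $\mathcal B$. The dominant branch was chosen to have the maximal number of $\tilde n$-bridges, and the working substitution has $\overline{x_{i_1}}$ as an $\tilde n$-bridge, meaning one of its endpoints has degree $\tilde n$. When we pass to a wrong substitution $\overline{x_{i_1}}'$, by definition neither endpoint of the corresponding edge has degree $\tilde n$. Now I would track where the blocks of degree $\tilde n$ have gone. The factors $c_{n_{i_1}}(v)$ and $c_{n_{i_1+1}}(v)$ in the atom force the specialization of the atom, if nonzero, to enter a block of degree $n_{i_1}$ on the left and exit a block of degree $n_{i_1+1}$ on the right; since the original working substitution had $\tilde n = \max\{n_{i_1}, n_{i_1+1}\}$, at least one of these forced degrees equals $\tilde n$. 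Thus a nonzero wrong specialization of the atom must contain a path that both visits a block of degree $\tilde n$ (forced by the Capelli factors) and has its ``bridge'' edge $\overline{x_{i_1}}'$ avoid degree $\tilde n$ at both endpoints — which forces the path to enter the degree-$\tilde n$ region and leave it via (at least) two separate radical steps, i.e., at least two $\tilde n$-bridges beyond the one we subtracted. Combined with the bookkeeping in the definition of bridge contribution (total bridges minus $1$ for the original $\tilde n$-bridge $\overline{x_i}$), this gives bridge contribution $\ge 2$.

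Concretely I would argue: consider the subpath of the dominant branch determined by the atom's specialization. The leftmost forced vertex has degree $n_{i_1}$, the rightmost has degree $n_{i_1+1}$, and one of these is $\tilde n$; say $n_{i_1} = \tilde n$ (the other case is symmetric). The vertex immediately after the bridge edge $\overline{x_{i_1}}'$ has some degree $\ne \tilde n$ (wrongness), and the vertex of degree $\tilde n$ forced by $c_{n_{i_1}}$ lies on one side of this edge. Since edges in a full quiver only connect vertices in a way governed by the radical $J$ and go from lower to higher order, the path must cross between the degree-$\tilde n$ block and the non-$\tilde n$ neighborhood, and because both endpoints of $\overline{x_{i_1}}'$ avoid $\tilde n$, this crossing is accomplished by radical edges other than $\overline{x_{i_1}}'$ itself — and maximality of the number of $\tilde n$-bridges in $\mathcal B$, together with the preceding remark bounding wrong substitutions to $k$ bridges, pins down that there are at least two such edges. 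Hence the bridge contribution (which has already discounted the single $\tilde n$-bridge $\overline{x_i}$) is at least $2$.

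The main obstacle I expect is the bookkeeping around gluing: an ``internal'' radical bridge between two distinct blocks of the same degree $\tilde n$ is technically ``$\tilde n$-right'' but behaves exceptionally, and one must make sure the argument does not secretly rely on such an edge to produce the second bridge, nor miscount because of diagonal or Frobenius gluing identifying vertices. I would handle this by appealing to the preliminary hiking of \S\ref{firstst} (which neutralizes $(q_1,q_2)$-Frobenius gluing) and by treating the $\tilde n$-internal case separately, exactly as flagged in the remark following the definition of right/wrong, noting that an internal edge contributes to neither the count of $\tilde n$-bridges being maximized nor to lowering the matrix degree below $\tilde n$, so it cannot substitute for one of the two genuine bridges the wrongness hypothesis forces.
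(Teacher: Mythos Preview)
Your argument rests on a misidentification of which variable is being analyzed. The first-stage hiking replacement $c_{n_{i_1}}(v)\, z_{i_1}\,[x_{i_1},h_{\tilde n-1}(v)]\, z_{i_1+1}\, c_{n_{i_1+1}}(v)$ is applied only to a variable $x_{i_1}$ whose working substitution $\overline{x_{i_1}}$ is an $\tilde n$-\emph{bridge}; the lemma, however, concerns a variable whose working substitution is $\tilde n$-\emph{semisimple}. You slide between the two: the opening sentence says $\overline{x_{i_1}}$ is $\tilde n$-semisimple, the second paragraph says it is an $\tilde n$-bridge. These are different objects, and the Capelli-flanked atom you analyze simply is not the replacement made at an $\tilde n$-semisimple position. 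Consequently the entire ``forced entry at degree $\tilde n$ via $c_{n_{i_1}}$'' mechanism is not available where you invoke it, and the path-counting that follows does not apply to the situation the lemma describes. Even on its own terms, the counting does not clearly produce two extra bridges: if, say, $n_{i_1}=\tilde n$, $n_{i_1+1}=m$, and the wrong substitution is $m$-semisimple, a single bridge in $z_{i_1}$ already connects the forced $\tilde n$-block to the $m$-block with no second crossing required.

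The paper's argument is both different and much shorter. It does not look at the bridge atom at all; it looks at the factor $h_{\tilde n}$ (the $\tilde n^2$-alternating central polynomial for $\M[\tilde n]$) sitting in $h$. Since $h_{\tilde n}$ is an identity of $\M[m]$ for every $m<\tilde n$, any evaluation that stays inside an $m$-semisimple component kills $h_{\tilde n}$ outright. Hence a nonzero specialization must leave the $m$-component and return, and each crossing of the boundary of that component costs a bridge; that is the source of the ``at least two.'' No appeal to Lemma~\ref{expans15}, to maximality of the number of $\tilde n$-bridges in $\mathcal B$, or to the structure of the hiked atom is needed.
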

\begin{proof} When evaluating $h_{\tilde n}$ on semisimple elements
of degree $m$ we get 0 unless we pass away from the $m$-semisimple
component, which requires two bridges.
\end{proof}

\begin{lem}\label{expans17}
After the first stage of hiking, a
wrong specialization of an $\tilde n$-semisimple element is either
$\tilde n$-semisimple or its bridge contribution is at least 1.
\end{lem}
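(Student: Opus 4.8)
The statement to prove is Lemma~\ref{expans17}: after the first stage of hiking, a wrong specialization of an $\tilde n$-semisimple element is either $\tilde n$-semisimple or has bridge contribution at least $1$. The plan is to argue by cases on what a wrong specialization can be. Recall that after the first stage of hiking the variable $x_{i_1}$ has been replaced by $c_{n_{i_1}}(v)z_{i_1}[x_{i_1},h_{\tilde n-1}(v)]z_{i_1+1}c_{n_{i_1+1}}(v)$, so any nonzero substitution of the new indeterminate-block must pass through matrix blocks of degree at least $\tilde n$ on both sides (the Capelli factors $c_{n_{i_1}}$, $c_{n_{i_1+1}}$ kill blocks that are too small), and the commutator with $h_{\tilde n-1}$ forces a genuine change of component when the substituted element is of degree $<\tilde n$. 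First I would invoke Lemma~\ref{expans15}: any nonzero specialization of $h$ is either $\tilde n$-semisimple or has positive bridge contribution. This already disposes of most of the work; the point of the present lemma is to restrict to \emph{wrong} specializations of an originally $\tilde n$-semisimple element and observe that the same dichotomy persists.

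So suppose $\overline{x_i}'$ is a wrong specialization of some $x_i$ whose original substitution $\overline{x_i}$ was $\tilde n$-semisimple, and suppose the whole specialization of $h$ is nonzero. If that specialization of $h$ is $\tilde n$-semisimple we are done; otherwise, by Lemma~\ref{expans15}, its bridge contribution is positive, i.e.\ at least $1$, which is again the conclusion. The only gap is the implicit claim that ``wrong specialization'' is not automatically ruled out by the first stage of hiking — but it isn't, precisely because an internal $(j,j)$-bridge with $j=\tilde n$ is $\tilde n$-right while a bridge between two distinct blocks both of degree $\tilde n$ is still possible; and more to the point, a wrong substitution of $x_i$ (both of whose vertex-degrees differ from $\tilde n$) can still survive if it is compensated elsewhere along the branch. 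In that surviving case the branch must still reach degree $\tilde n$ somewhere (since the bonds $h_{\tilde n}$ evaluate to something nonzero only on blocks of degree $\tilde n$), and getting from a vertex of degree $\ne \tilde n$ up to a vertex of degree $\tilde n$ costs at least one bridge that was not present (or not counted) in the dominant branch $\mathcal B$; hence the bridge contribution is at least $1$.

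The main obstacle I expect is the bookkeeping distinction between Lemma~\ref{expans16} and Lemma~\ref{expans17}: the former asserts bridge contribution $\ge 2$ in the more dangerous situation where a wrong $\tilde n$-semisimple element becomes $m$-semisimple for $m<\tilde n$, whereas Lemma~\ref{expans17} only needs $\ge 1$ and covers all wrong specializations, including those that land in a bridge rather than in a semisimple component. The care required is to verify that the "$\ge 1$'' bound cannot degenerate to $0$: a bridge contribution of $0$ means (by the definition recalled just before Lemma~\ref{expans15} and used in its proof) that every substitution in the relevant atom is semisimple or a $(j,j)$-bridge, with $\tilde n$ not appearing as a new bridge degree. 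But a wrong substitution of the originally $\tilde n$-semisimple $x_i$ is, by definition of "wrong", not in a block of degree $\tilde n$ and not a bridge touching degree $\tilde n$; combined with the Capelli/commutator factors inserted in the first stage of hiking, the only way $h$ evaluates nonzero is to traverse a block of degree $\tilde n$, and reaching it from the wrong vertex is exactly one (or more) bridge more than the dominant branch's count — so the bridge contribution is $\ge 1$, as claimed. This is essentially the same reasoning as in the proof of Lemma~\ref{expans15}, refined by the observation that "wrong" forbids the trivial $\tilde n$-semisimple outcome from being reached without a bridge, so I would present it as a short corollary-style argument leaning on Lemmas~\ref{expans10} and~\ref{expans15} rather than repeating the full combinatorial analysis.
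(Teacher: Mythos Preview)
Your proposal is correct and lands on the same underlying observation as the paper: $h_{\tilde n}$ vanishes on semisimple substitutions of degree $m<\tilde n$, so a nonzero evaluation must leave that component, which costs at least one bridge. The paper's proof is exactly this one sentence; you reach it by routing through Lemma~\ref{expans15} as an intermediate step, which is legitimate but slightly indirect, since Lemma~\ref{expans15} already encodes the dichotomy you want and the ``wrong'' hypothesis adds nothing new at this level of generality (it only becomes sharper in Lemma~\ref{expans16}, where it forces two bridges rather than one).
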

\begin{proof} When evaluating $h_{\tilde n}$ on semisimple elements
of degree $m$ we get 0 unless we pass away from the $m$-semisimple
component, which requires a bridge.
\end{proof}

Thus, the first stage of hiking does not instantly zero out bridges
for wrong specializations, $\overline{x_{i}'}$, but does prepare for
their elimination in the second stage.

 Appending the Capelli polynomials also  sets the stage for eliminating other unwanted
substitutions in the second stage.

 After repeated applications of first stage hiking, we  wind up with a new polynomial $f(x_1, \dots, x_\ell;v;z)
$ where we still have our original indeterminates $x_i$ but have
adjoined new indeterminates.


\subsection{Second stage of hiking}\label{sechik}$ $

\begin{exmpl} To introduce the underlying principle, here is a slightly more
complicated example. Consider the quiver of three arrows, from
degree 2 to degree 1, degree 1 to degree 1, and finally from degree
1 to degree 1.

 First we multiply on the left by $c_4[h_{1,1},z_1]z_2.$  The second
substitution could have an unwanted position inside the first matrix
block of degree~2, since $c_4[h_{1,1},z_1]z_2$ could be evaluated in
the larger component. We take $f_{x_1 \mapsto c_{2}y'y   x_1} -f_{
x_1 \mapsto x_1 c_{2} y'y} ,$ i.e., we multiply by a central
polynomial $h_2$ on the left and subtract it from a parallel
substitution of $h_2$ on the right. The unwanted substitution then
cancels out with the other substitution and leaves~0.
\end{exmpl}

 In the second stage of
hiking, in the blended case, we arrange for all previously
unassigned nonzero substitutions to be pure radical.
%
%
%
%

Suppose $f(x_, \dots, x_\ell; y;z;z')$ is already hiked after the
first stage, and in the branch $\mathcal B$ the indeterminate $z_i$
occurs of degree $d_i$ and the indeterminate $z_{u+1}$ occurs of
degree~$d'$, where $1 \le j \le u.$

\begin{prop}\label{expans0} There are three cases to consider:
\begin{enumerate}\eroman
\item There is a string $\overline{x_{i-1}}  \overline{x_{i}} \cdots  \overline{x_j} \overline{x_{j+1}}$
where $\overline{x_{i}}, \cdots, \overline{x_j}$ are all semisimple
of the same degree $x_{n_j}$ whereas $\overline{x_{i-1}}
\overline{x_{i}},\ \overline{x_j} \overline{x_{j+1}}$ are both
$\tilde n$-bridges.

We take the polynomial
\begin{equation}\label{OK} f_{z_i \mapsto h_{\tilde n}(y')^{d_i}  z_i}
-f_{ z'_{u+1} \mapsto z'_{u+1} h_{n_u}(y')^{t_i} } ,\end{equation}
where the branch $\mathcal B$ has depth $u$ and $t_i$ designates the
maximal degree of $x_i$ in a monomial of $\mathcal B$, where $y'$ is
a fresh set of indeterminates.

\item There is a string $\overline{x_{1}}  \overline{x_{i}} \cdots \overline{x_j} \overline{x_{j+1}}$
where $\overline{x_{1}} \cdots, \overline{x_j}$ are all semisimple
of the same degree $x_{n_1}$ whereas $\overline{x_{j}}
\overline{x_{j+1}}$ is an $\tilde n$-bridge. We take the polynomial
\begin{equation}\label{OK2} f_{z_1 \mapsto h_{\tilde n}(y')^{d_1}  z_1}
 .\end{equation}

\item There is a string $  \overline{x_{i-1}}   \overline{x_{i}} \cdots  \overline{x_{k}} \overline{x_k}$
where $\overline{x_{i}}, \cdots, \overline{x_{k-1}}$ are all
semisimple of the same degree $x_{n_1}$ whereas $
\overline{x_{i-1}}\overline{x_{i}}$ is an $\tilde n$-bridge. We take
the polynomial
\begin{equation}\label{OK3} f_{z_1 \mapsto h_{\tilde n}(y')^{d_1}  z_1}
 .\end{equation}
 \end{enumerate}

This procedure was described so far without Frobenius twists. To
eliminate superfluous Frobenius twists, we also perform the
substitutions
\begin{equation}\label{sechik1} h_{\tilde n}(y')^{q_1} x_j - x_j  h_{\tilde n}^{q_2}(y'),\end{equation}
where $q_1\ne q_2$ range over the various powers of $p$.

 This hiking zeroes out semisimple substitutions of highest degree
(namely $\tilde n$), but not a radical substitution at the $u$ block.
\end{prop}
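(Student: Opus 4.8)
The plan is to read the statement for what it is: a trichotomy of configurations, a prescription of a hiked polynomial in each, and one genuine assertion (the closing sentence). I would establish, in order, (a) that cases (i)--(iii) exhaust the configurations left after the first stage, (b) that each of \eqref{OK}, \eqref{OK2}, \eqref{OK3} (and \eqref{sechik1}) is a legitimate hiking of $f$, and (c) the closing sentence.

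For (a): since we are past the case $u=k$, the dominant branch $\mathcal B$ carries at least one $\tilde n$-bridge and, read from its initial to its terminal vertex, is an alternation of vertices --- each carrying a run of semisimple substitutions of a single matrix degree --- with edges which, $\mathcal B$ being dominant and invoking the induction on the number of semisimple components of the full quiver provided by \Tref{Can2}, we may take to be $\tilde n$-bridges. A given run is then flanked by two such bridges (case (i)), or abuts the initial vertex (case (ii)), or abuts the terminal vertex (case (iii)); these are mutually exclusive and cover everything. For (b): in each case the displayed polynomial is obtained from $f$ by replacing one of the first-stage auxiliary indeterminates $z_i$ (resp.\ $z_1$, $z'_{u+1}$) by a product which is \emph{linear} in that indeterminate and otherwise involves only the fresh set $y'$; by \Dref{mol} this is an atom-level hiking, so it lies in the T-ideal of $f$, and the difference of two such, as in \eqref{OK}, lies there too. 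The substitutions \eqref{sechik1} are of the same form and are appended only when Frobenius twists remain to be absorbed.

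The content is (c). The tool is that $h_{\tilde n}$, being a multilinear central polynomial of $\M[\tilde n]$ alternating in a distinguished set of $\tilde n^2$ indeterminates, is an identity of $\M[m](F)$ for every $m<\tilde n$ (as $\tilde n^2 > m^2 = \dim_F\M[m](F)$), while a non-vanishing central value is a scalar. Hence each inserted factor $h_{\tilde n}(y')$ forces the matrix block it occupies to have degree exactly $\tilde n$ on any nonzero evaluation: a substitution of the original $x_i$'s that would drive that position into a block of degree $<\tilde n$ pins the $y'$ there and makes $h_{\tilde n}(y')$ vanish --- disposing at once of all $m$-semisimple wrong substitutions with $m<\tilde n$. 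For the wrong substitutions that stay $\tilde n$-semisimple but land in the wrong component, the two-sided construction in \eqref{OK} --- $h_{\tilde n}(y')^{d_i}$ on the left of $z_i$ against a matching power on the right of $z'_{u+1}$ --- is arranged so that on such an evaluation the two summands act by the same central scalar and cancel, exactly as in the example $f_{x_1\mapsto c_2y'yx_1}-f_{x_1\mapsto x_1c_2y'y}$ preceding the proposition; the powers $d_i$ (the degree of $z_i$ in $\mathcal B$) and $t_i$ (the maximal degree of $x_i$ in $\mathcal B$) are what make the two scalar contributions match. The comparison of $h_{\tilde n}^{q_1}$ with $h_{\tilde n}^{q_2}$ in \eqref{sechik1} kills, in the same way, any evaluation through a nontrivial $(q_1,q_2)$-Frobenius twist. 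What is \emph{not} killed is, first, the dominant evaluation --- there the subtracted copy cannot cancel the first, since for $n_j<\tilde n$ the $h_{\tilde n}(y')$ at $z_i$ annihilates the first summand while the factor at $z'_{u+1}$, attached at the degree-$\tilde n$ $u$-block, merely rescales the second --- and, second, a purely radical substitution at the $u$-block, which joins two glued blocks of degree $\tilde n$ and on which the inserted $h_{\tilde n}(y')$ still acts nontrivially. Together with the first stage, this leaves every previously unassigned nonzero substitution pure radical --- in particular the unwanted $\tilde n$-semisimple evaluations are zeroed out while a radical substitution at the $u$-block survives --- which is the claim.

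The main obstacle, and the step I would take most care over, is pinning down the cancellation bookkeeping behind \eqref{OK} and \eqref{sechik1}: one must verify that these differences annihilate precisely the unwanted $\tilde n$-semisimple and Frobenius-twisted evaluations and leave $\mathcal B$ non-zero. Here I would bring in the bridge-contribution counts of \Lref{expans15}--\Lref{expans17} (a wrong substitution of an $\tilde n$-semisimple element either stays $\tilde n$-semisimple or acquires positive bridge contribution), carry along the repetition of the $x_i$, hence of the atoms and of $y'$, and segregate the $\tilde n$-internal bridge --- nominally ``$\tilde n$-right'' yet behaving like a wrong substitution --- to be handled separately.
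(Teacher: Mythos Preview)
Your proposal is correct and follows essentially the same approach as the paper. The paper's own proof is extremely terse: it notes that cases (ii) and (iii) handle the endpoints, and for the main case (i) observes that \eqref{OK} vanishes on a semisimple substitution (the two summands coincide, since $h_{\tilde n}$ contributes the same central scalar on either side) but not on a radical one (exactly one of the two summands is then zero, because $h_{\tilde n}$ vanishes on the block of smaller degree); the Frobenius substitutions \eqref{sechik1} are dispatched in one line. Your argument unpacks precisely this mechanism --- cancellation via centrality for $\tilde n$-semisimple evaluations, survival of one summand on bridges --- and correctly identifies the role of the exponents $d_i$, $t_i$ in matching the scalar contributions. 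The material you bring in from \Lref{expans15}--\Lref{expans17} on bridge contributions, and the separate handling of $\tilde n$-internal bridges, is not needed for \emph{this} proposition (it feeds the subsequent Lemmas \ref{expans1}--\ref{expans2} instead), but it does no harm.
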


\begin{proof} (Note that (i) is the usual case, but we also need (ii)
and (iii) to handle terms lying at either end of the polynomial.)
The expression \eqref{OK} yields zero on a semisimple substitution,
but not on a radical substitution, since exactly one of the two
summands of \eqref{OK} would be 0. Likewise   the other cases  yield
zero on a semisimple substitution, but not on a radical
substitution.

Multiplying by all substitutions of \eqref{sechik1}  annihilates all
non-identity Frobenius twists.
\end{proof}

%
%
%


%

\begin{lem}\label{expans1} The second stage of hiking forces any
nonzero specialization of an $\tilde n$-bridge also to be an $\tilde
n$-bridge.
\end{lem}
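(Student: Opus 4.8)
The plan is to fix an $\tilde n$-bridge $\overline{x_i}$ of the working substitution in the dominant branch $\mathcal B$, realized after hiking by a variable $x_i$, and to follow the slot it occupies through the two stages already carried out. After the first stage (\S\ref{firstst1}), $x_i$ appears only inside a sandwich $c_{n_i}(v)\,z_i\,[x_i,h_{\tilde n-1}(v)]\,z_{i+1}\,c_{n_{i+1}}(v)$, where $\tilde n=\max\{n_i,n_{i+1}\}$ and $z_i,z_{i+1}$ are the auxiliary indeterminates; after the second stage (Proposition~\ref{expans0}, case (i)) the auxiliary indeterminate immediately to one side of $x_i$ is replaced via $z_i\mapsto h_{\tilde n}(y')^{d_i}z_i$, so that a positive power of the central polynomial $h_{\tilde n}(y')$ for $\M[\tilde n]$ --- in a block of fresh indeterminates $y'$ that occur nowhere else --- now abuts that slot, while the parallel term $f_{z'_{u+1}\mapsto z'_{u+1}h_{n_u}(y')^{t_i}}$ is subtracted; the Frobenius substitutions \eqref{sechik1} are also appended. (Bridges at an end of the monomial are treated with cases (ii), (iii), with no essential change to the argument.)

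Now take any nonzero specialization of the resulting polynomial and examine the edge $\overline{x_i}'$ it assigns to that slot; the claim is that it is again an $\tilde n$-bridge. First suppose $\overline{x_i}'$ is semisimple, in a block of matrix degree $m$. Then (granting, as Lemmas~\ref{expans15}--\ref{expans17} permit, that the sandwich contributes no new bridges) the commutator $[x_i,h_{\tilde n-1}(v)]$ already vanishes unless $m=\tilde n$, since $h_{\tilde n-1}$ is central for $\M[\tilde n-1]$ and an identity of every $\M[m']$ with $m'<\tilde n-1$; and on a semisimple value of degree $\tilde n$ the two summands of \eqref{OK} agree and cancel, as in the proof of Proposition~\ref{expans0}. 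So no semisimple value of $x_i$ survives. Next suppose $\overline{x_i}'$ is a bridge having no endpoint of matrix degree $\tilde n$; then in particular the vertex abutting the inserted factor $h_{\tilde n}(y')^{d_i}$ has degree $<\tilde n$. Since the indeterminates $y'$ occur nowhere else they all evaluate inside that one block, whose dimension is $<\tilde n^2$; as $h_{\tilde n}$ is alternating in $\tilde n^2$ indeterminates and $\tilde n$ is the largest matrix degree occurring in $\widetilde{A_0}$, it is an identity of that block, so $h_{\tilde n}(\overline{y'})=0$ and the whole monomial vanishes, a contradiction. The Frobenius substitutions \eqref{sechik1} rule out, in the same way, the remaining case of a right-sized endpoint with a wrong $(q_1,q_2)$-twist. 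Hence some endpoint of $\overline{x_i}'$ has degree $\tilde n$; that is, $\overline{x_i}'$ is an $\tilde n$-bridge (or an $\tilde n$-internal radical substitution, which the text treats separately), and by Theorem~\ref{Can2} one may in addition assume that a wrong specialization still uses only $k$ bridges.

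The step I expect to be the real obstacle is not this vanishing argument but the bookkeeping around it. If $x_i$ is repeated in $f$, then its atom --- and with it the fresh $y'$-block --- is repeated, so one must check that the cancellation of semisimple values happens simultaneously at all occurrences, and that the power $h_{\tilde n}(y')^{d_i}$ is not silently nilpotent: it is exactly this that forces the $y'$-block to be honestly semisimple of degree $\tilde n$, not merely to list $\tilde n$ among its glued degrees. One must likewise control the internal radical substitutions among the endpoints, and the hazard flagged in Remark~\ref{dombr}(2) that several ``right'' monomials cancel against one another --- which is precisely why a \emph{central} polynomial $h_{\tilde n}(y')$, not a bare Capelli, is inserted: its value on a degree-$\tilde n$ block is a nonzero scalar, so it does not interfere with the surviving evaluation. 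Coordinating all of this with the bridge-contribution estimates of Lemmas~\ref{expans15}--\ref{expans17} should then close the argument.
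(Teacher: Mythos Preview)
Your argument tracks the paper's: both use the first-stage sandwich $c_{n_{i_1}}\,z_{i_1}\,[x_{i_1},h_{\tilde n-1}]\,z_{i_1+1}\,c_{n_{i_1+1}}$ together with the second-stage insertion of $h_{\tilde n}(y')$ to pin down the endpoints of the specialized slot. Your case-split --- semisimple values die by the cancellation in \eqref{OK}, and a bridge with no $\tilde n$-endpoint dies because $h_{\tilde n}$ is $\tilde n^2$-alternating --- is a more explicit unpacking of the paper's two-line argument that ``at least one vertex must be of degree~$\tilde n$'' and ``if both were $\tilde n$ the evaluation would be~$0$.''

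The one place you stop short of the paper is the $\tilde n$-internal case, which you explicitly defer. The lemma asserts that the specialization is an $\tilde n$-\emph{bridge}, and by the paper's definition an $\tilde n$-bridge is a \emph{proper} bridge (endpoints of distinct degrees); a radical specialization landing between two distinct blocks both of degree~$\tilde n$ would therefore not satisfy the stated conclusion. The paper's proof does dispose of this under the same clause ``if both were $\tilde n$ the evaluation would be~$0$,'' citing Lemmas~\ref{expans15}--\ref{expans17} and Proposition~\ref{expans0} jointly, not just the latter. So to match the lemma as stated you should extend your cancellation argument to cover the radical-but-both-$\tilde n$ situation rather than setting it aside; otherwise what you have proved is only that some endpoint has degree~$\tilde n$, which is the paper's first sentence but not its second.
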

\begin{proof} In order to provide a nonzero value, at least one of its
vertices must be of degree~$\tilde n$. But if both were  $\tilde n$
the evaluation would be  $ 0,$ by
Lemmas~\ref{expans15}---\ref{expans17} and Remark~\ref{expans0}.
Thus we get an $\tilde n$-bridge.
\end{proof}

\begin{lem}\label{expans2} After the first and second stages of
hiking, the positions of semisimple substitutions of degree~$\tilde
n$ in nonzero evaluations are fixed; in other words, semisimple
substitutions of degree~$\tilde n$ are $\tilde n$-right.
\end{lem}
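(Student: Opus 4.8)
The plan is to argue by contradiction. Suppose $x_i$ is a variable specialized to a semisimple element of degree $\tilde n$ in the working substitution, but that in some nonzero evaluation $f(\overline{x_1}',\dots,\overline{x_\ell}')$ the substitution $\overline{x_i}'$ is \emph{wrong}, so that neither of its two degrees equals $\tilde n$; the goal is to show this evaluation is in fact $0$. First I would locate $x_i$ inside the dominant branch $\mathcal B$. Since $u<k$, the variable $x_i$ lies in a maximal run of semisimple letters of degree $\tilde n$, and each end of this run either abuts an end of $f$ or is separated from the remainder of $\mathcal B$ by an $\tilde n$-bridge — exactly the configuration normalized by the second stage of hiking in Proposition~\ref{expans0}. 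After that stage each such flanking $\tilde n$-bridge carries an adjacent bond $h_{\tilde n}(y')$; since $h_{\tilde n}$ is an alternating central polynomial for $\M[\tilde n](F)$, it vanishes on $\M[j](F)$ for every $j<\tilde n$ (and $\tilde n$ is maximal), so in a nonzero evaluation the block at the position occupied by that bond must have degree exactly $\tilde n$. Thus the two ends of the run are pinned to vertices of degree $\tilde n$.

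Next I would combine this with Lemma~\ref{expans1} and the Remark preceding Theorem~\ref{hikthm7}. By Lemma~\ref{expans1}, after the second stage every nonzero specialization of an $\tilde n$-bridge is again an $\tilde n$-bridge, hence still has an endpoint of degree $\tilde n$; and by the cited Remark (a consequence of the Second Canonization Theorem~\ref{Can2}) the wrong evaluation has the same number of bridges, and no more internal radical substitutions, than $\mathcal B$. Consequently there is no spare bridge for the stretch of path strictly between two consecutive flanking $\tilde n$-bridges to drop to a smaller-degree block and climb back up; by Lemma~\ref{expans10} the letters of that stretch form a single semisimple component, and being clamped between degree-$\tilde n$ vertices it must have degree $\tilde n$, apart from possible $\tilde n$-internal radical substitutions. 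Either way $\tilde n$ is one of the degrees of $\overline{x_i}'$, i.e.\ $\overline{x_i}'$ is $\tilde n$-right, contradicting the assumption. The end cases, where the run abuts the start or end of $f$, run identically, using parts (ii) and (iii) of Proposition~\ref{expans0} in place of (i).

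The step I expect to be the main obstacle is the bookkeeping around $\tilde n$-internal bridges: such a substitution is technically $\tilde n$-right, yet it behaves like a detour and could a priori either consume bridge budget or let a run slip out of the degree-$\tilde n$ stratum — the ``delicate point'' flagged before Remark~\ref{dombr}. Handling it cleanly will require invoking the maximality of the internal-radical count (the $k'$ of that Remark) together with Lemmas~\ref{expans15}--\ref{expans17}, rather than a blunt count of bridges, and also keeping in mind that the preliminary hiking of \S\ref{firstst} has already trivialized the non-identity Frobenius twists, so that the bonds $h_{\tilde n}$ act as genuine degree-$\tilde n$ central polynomials throughout.
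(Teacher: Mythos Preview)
Your argument is correct and follows essentially the same route as the paper: both hinge on Lemma~\ref{expans1} together with the maximality of the number of $\tilde n$-bridges in the dominant branch~$\mathcal B$ to conclude that the $\tilde n$-bridge positions are exactly those of~$\mathcal B$, and then deduce that the degree-$\tilde n$ semisimple positions are forced. The paper's write-up is terser---it phrases the last step as ``fixing the order of the pair of indices in the $\tilde n$-bridge'' rather than your clamping-between-bonds language, and it does not separately address the $\tilde n$-internal case you flag---but the substance is the same.
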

\begin{proof} Lemma~\ref{expans1} ``uses up'' all the places for $\tilde n$-bridges,
since more $\tilde n$-bridges would yield a substitution
contradicting the maximality of the number of $\tilde n$-bridges in
$\mathcal B$. If $\mathcal B$ has no semisimple substitutions of
degree $\tilde n$ then there is no room for any semisimple
substitutions of degree $\tilde n$ whatsoever, and we are done.

But if  $\mathcal B$ has a semisimple substitution  of degree
~$\tilde n$, that substitution must border an $\tilde n$-bridge,
fixing the order of the pair of indices in the $\tilde n$-bridge,
and thus fixing the positions of all the gaps of index $\tilde  n$
between $\tilde n$-bridges, so again we are done.
\end{proof}

\begin{rem}\label{fin} Although this is already taken care of in the proof, we could have removed finite components
simply by substituting $x_i^m - x_i^\ell$ for $x_i$, for suitable
$\ell,m$.\end{rem}

\subsection{Conclusion of the proof of the Hiking and Canonization Theorems for Polynomials}$ $

 {\bf Proof of the Hiking Theorem for Polynomials
(Theorem~\ref{hikthm7}).} Just iterate the hiking procedure down from
$\tilde n$. (It might well be that the right substitutions to degree
$\tilde n$ cancel, cf.~Remark~\ref{dombr}(2), and then we continue
to $\tilde n -1$); when we finally get to 1 then we are in the
unmixed case, which was handled in \S\ref{nonmixed}.   $\square$

\bigskip

 {\bf Proof of the Canonization Theorem for Polynomials (Theorem~\ref{hikthm}).} After finishing the hiking, one obtains the bond
 (with a nonzero specialization) by replacing~$f$ by
$$f_{z_u \mapsto c_{\tilde n}(y')^{t_u} z_u, \ z'_{u+1} \mapsto
z'_{u+1} c_{n_u}(y')^{t_1}}.$$ \hfill $\square$

%
%

\begin{exmpl}\label{414}
Let us run through the hiking procedure, taking $$\widetilde {A_0} =
\left\{\left(\begin{array}{ccccc}
* & * & * & * & *\\ %
* & * & * & * & *\\ %
* & * & * & * & *\\ %
0 & 0 & 0 & * & * \\
0 & 0 & 0 & * & *
\end{array}\right)\right\}.$$
We have the full quiver $$I_3 \to \II_2,$$ and take the nonidentity
$f = x_1 [x_2, x_3]x_4 + x_4[x_2, x_3]x_1^2.$ We denote the second
matrix component,   as $B = \M[2](K)$,

 We have nonzero
specializations with $\overline{x_1}$ in the first matrix component,
$\overline{x_2}$ an external radical specialization, and
$\overline{x_3}$ in  $B$, but also we have a nonzero specialization
of all variables into $B$. To avoid this situation, we replace~$f$
by
$$f(c_3(y)zx_1z',x_2,x_3,x_4) = c_3(y) zx_1z' [x_2, x_3]x_4+
x_4 [x_2, x_3]^2 c_3(y) zx_1 z' c_3(y) zx_1 z' .$$  Now any specialization into $B$ becomes 0, so
we have eliminated some ``wrong'' specializations. For stage 2 we
take $$\begin{aligned} \tilde f( x;y;y'; z;z')& : =
f(c_3(y)c_3(y')^2 zx_1z',x_2,x_3,x_4) - f(c_3(y)
zx_1z'c_3(y'),x_2,x_3,x_4) \\ = (c_3(y)& c_3(y')^2 zx_1z' [x_2,
x_3]x_4+
 x_4 [x_2, x_3]^2 c_3(y) c_3(y')^2 zx_1 z' c_3(y) zx_1 z')\\ & - (c_3(y) zx_1z' c_3(y') [x_2,
x_3]x_4+
 x_4 [x_2, x_3]^2 c_3(y) zx_1 z' c_3(y') zx_1 z'c_3(y')) ,\end{aligned}$$
 where we see the specialization of highest degree in the first matrix component has
 been eliminated. We can eliminate the nonzero specializations of
 $c_3(y'')$ of degree 1 by taking
 $\tilde f( x;y;y';c_3(y'') z;z')- \tilde f( x;y;y'; z;z'c_3(y''))$
 which leaves us only with a radical specialization and a
 critical polynomial with a single
 bond $c_3(y'')$.

 Note how quickly the polynomial becomes complicated even though we
 have hiked only one of the original indeterminates.
\end{exmpl}

\begin{rem} Other examples of hiking are given in \cite{BRV5}. The main difference between the hiking procedure of this
paper and that of stage 3 hiking of \cite{BRV5} is in the treatment
of the Frobenius automorphism. Stage 4 hiking from \cite{BRV5} is
also analogous.
\end{rem}

\section{Removing ambiguity of matrix degree}$ $

Any polynomial~$f$ of Theorem~\ref{hikthm} can be written as a sum
of homogeneous components $\sum f_j$, where $f_j$ has the same
matrix degree for each monomial. We want to reduce to homogeneous
components.

\begin{defn}\label{iso} A hiked polynomial is \textbf{uniform}
if there is some indeterminate $x_i$ for which, in each of its
monomials, the atom obtained from hiking $x_i$ is semisimple of the
same matrix degree.
\end{defn}

Our objective in this section is to hike to a uniform polynomial.
First we use \S\ref{nonmixed} to dispose of the easy case where each
hiked monomial has a semisimple atom (Definition~\ref{mol}).

\begin{defn}\label{iso1} A radical element of a molecule is \textbf{isolated}
if multiplication by any radical element on the left or right is
zero. \end{defn}

\begin{rem}\label{unhik} The product of two isolated elements is 0, by
definition.
\end{rem}

\begin{prop}\label{closed1} Any polynomial with a nonzero substitution can be hiked to a uniform polynomial, with a nonzero substitution.
\end{prop}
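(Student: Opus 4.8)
The plan is to reduce to the homogeneous situation by peeling off, one matrix degree at a time, the ambiguity created by nonhomogeneous monomials, using the hiking machinery already developed. We write $f = \sum_j f_j$, where each $f_j$ collects the monomials whose (dominant) branch has a prescribed descending degree vector, so that $f_j$ is ``homogeneous'' in the sense of \S\ref{467} — the matrix degree of every monomial of $f_j$ agrees. Since $f$ is a non-identity, some $f_j$ is a non-identity of $\widetilde{A_0}$ on the relevant substitution; but we cannot simply throw away the other $f_{j'}$, because they lie in the same T-ideal and may interfere with the bond we want to install. Instead, the idea is to apply the Hiking Theorem for Polynomials (\Tref{hikthm7}) to the chosen dominant branch $\mathcal B$: after hiking, all substitutions of the $x_i$ in $\mathcal B$ are right, and in particular there is an indeterminate $x_i$ whose hiked atom is forced to be semisimple of matrix degree exactly $\tilde n$ (this is precisely the indeterminate sitting at a $\tilde n$-semisimple vertex bordered by $\tilde n$-bridges, cf.~\Lref{expans2}). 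That atom being semisimple of fixed degree $\tilde n$ in every surviving monomial is the definition of \emph{uniform} (\Dref{iso}).

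The key steps, in order. First, invoke \S\ref{nonmixed} to dispatch the case in which, after hiking, every monomial already has a semisimple atom of the same degree — then $f$ is already uniform and there is nothing to prove. So assume some hiked monomial has a purely radical atom, i.e.\ an isolated radical element of a molecule (\Dref{iso1}). Second, run the hiking procedure of \S\ref{hik} relative to a dominant branch $\mathcal B$ of $f$ chosen with $\tilde n$ as large as possible; by \Tref{hikthm7} this produces a critical non-identity $\tilde f$ in the T-ideal of $f$ in which all $x_i$-substitutions along $\mathcal B$ are right. Third, identify the distinguished indeterminate $x_i$ supplied by \Lref{expans2}: since $\mathcal B$ is dominant with $\tilde n$ occurring in its degree vector, it has a $\tilde n$-semisimple vertex adjacent to a $\tilde n$-bridge, and the variable occupying that vertex has, in \emph{every} nonzero monomial, a hiked atom which is semisimple of degree $\tilde n$ — this uses \Lref{expans1}, which pins the $\tilde n$-bridges, and then \Lref{expans2}, which pins the $\tilde n$-semisimple slots. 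Fourth, observe that this is exactly the uniformity condition of \Dref{iso}, and that $\tilde f$ still has a nonzero substitution because hiking never kills the working substitution in $\mathcal B$.

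The main obstacle I expect is the edge case where the dominant branch $\mathcal B$ has \emph{no} semisimple substitution of degree $\tilde n$ at all — only $\tilde n$-bridges and lower-degree blocks — so that there is no ``natural'' semisimple atom of degree $\tilde n$ to point to. In that situation one cannot read uniformity off \Lref{expans2} directly; instead one must artificially create a uniform atom, by hiking a linearly-occurring indeterminate $x_i$ to $x_i c_{\tilde n}(v)$ (cf.~\Rref{fold}), which forces a semisimple substitution of degree $\tilde n$ to appear in that slot in every surviving monomial, at the cost of checking that this does not annihilate the working substitution — which it does not, since along $\mathcal B$ the relevant position \emph{can} be specialized into a block of degree $\tilde n$. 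The other delicate point, flagged in the text, is the internal radical bridge (a radical substitution between two glued blocks of the same degree $\tilde n$): such a substitution is technically ``right'' but is not semisimple, so one must verify that after the second stage of hiking these are either forced to zero or absorbed into the bond, so that they do not spoil uniformity of the chosen atom. Granting these, the proposition follows by combining \Tref{hikthm7} with \Lref{expans1}, \Lref{expans2}, and the definitions in \S\ref{nonmixed}.
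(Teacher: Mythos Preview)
Your approach is substantially different from the paper's, and considerably more elaborate than necessary. The paper's argument is a three-line iteration based on nilpotence of the radical, not on the branch analysis of \Tref{hikthm7}: multiply $x_i$ by a fresh indeterminate $x_i'$ and hike that; if the resulting atom has a radical substitution, repeat with another fresh indeterminate adjacent to it. Since $J^{t+1}=0$, after at most $t$ consecutive radical atoms the molecule becomes isolated (\Dref{iso1}), and then by \Rref{unhik} the next hiked atom is forced to be semisimple, giving uniformity. No appeal to dominant branches, \Lref{expans1}, or \Lref{expans2} is needed.

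Your route, by contrast, tries to read uniformity off the conclusion of \Tref{hikthm7} via \Lref{expans2}. This has a genuine gap at exactly the point you flag. \Lref{expans2} fixes the \emph{positions} of $\tilde n$-semisimple substitutions along the dominant branch $\mathcal B$, but it does not hand you an indeterminate whose hiked atom is semisimple in \emph{every} monomial of the (possibly nonhomogeneous) polynomial; other pseudo-dominant monomials need not place that variable at a $\tilde n$-semisimple vertex. Your proposed patch for the edge case---hiking $x_i\mapsto x_i\,c_{\tilde n}(v)$ to manufacture a $\tilde n$-semisimple atom---does not work as stated: if the position of $x_i$ in $\mathcal B$ sits in a block of degree $<\tilde n$, then $c_{\tilde n}(v)$ can only survive there via additional radical substitutions, so the atom you produce is not semisimple and you have not achieved uniformity. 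The nilpotence argument sidesteps all of this: it is indifferent to which branch you are on and to whether $\tilde n$-semisimple vertices exist, because it simply exhausts the radical.
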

\begin{proof} Multiply $x_i$ by a new indeterminate $x_i'$ and hike that without making the substitution zero.
We are done unless it provides a radical substitution. Since
$J^{t+1} = 0$, we get an isolated element after at most $t$ hikes.
An extra occurrence of $x_i$ which is hiked on must then be
semisimple.
\end{proof}


\begin{lem}\label{ZarS0} We may hike further so that all matrix components  of
size $\tilde n$ are defined over the same field.\end{lem}
\begin{proof}
In Proposition~\ref{closed1} we have just reduced to the case where
all monomials have atoms of some $x_i$ of the same matrix degree
$\tilde n$ (and the substitutions of the $x_i$ are all semisimple),
but we next must contend with the possibility that the different
matrix components may be defined over different fields. But these
all have the same characteristic $p$ (the characteristic of $F$), so
have sizes say $p^{t_{i_1}}$ and $p^{t_{i_2}}$, and we modify
Proposition~\ref{expans0} by applying the appropriate Frobenius maps
$x \mapsto p^{t_{i_j}}$ at the various bonds.
\end{proof}

\section{Characteristic coefficient-absorbing polynomials inside
T-ideals}\label{trab}

Having started with our T-ideal $\mathcal I$ and a $\widetilde
{A_0}$-quasi-linear polynomial $f\in \mathcal I$
 with a nonzero evaluation (where we identify  a representation $\widetilde {A_0}$ with the full quiver of~$f$),
 we have seen how to hike~$f$ in various stages to get specific properties and still have a nonzero substitution.
Utilizing all of these hiking procedures and replacing $\mathcal
I$ by the T-ideal generated by this hiked polynomial, we may make
the following assumptions
 on~$f$:
\begin{itemize}
\item All monomials of~$f$ have the same matrix degree $\tilde n$, and over the same finite base field,
although the multiplicities might vary because of gluing;
\item~$f$ is uniform, so a radical substitution   into an auxiliary indeterminate~$z$ inside
$f$   yields 0; hence we have a \textbf{substitution action} of
semisimple elements, preserving $\mathcal I$ (the T-ideal generated
by the substituted polynomial is obviously contained in the T-ideal
$\mathcal I$ generated by~$f$).
\end{itemize}

To understand this substitution action, we want to utilize the
well-understood properties of semisimple matrices (especially the
coefficients of their characteristic polynomials, which we call
\textbf{characteristic coefficients}). We follow the treatment of
coefficient-absorbing polynomials from \cite[Theorem~4.26]{BRV5} and
\cite[\S 6.3]{BRV6}, although we can skip much of the discussion
there because we already have obtained a bonded polynomial (see \Dref{doccr}).

%
%
Using Theorem~\ref{hikthm}, we work with quasi-linear polynomials
and pinpoint semisimple substitutions of degree $\tilde n$, in order
to utilize the well-understood properties of semisimple matrices
(especially the characteristic coefficients of their simple
components).

\subsection{Characteristic coefficients}$ $

Over a field $K$, any matrix $a \in \M[n](K)$ can be viewed either
as a linear transformation on the $n$-dimensional space $V =
K^{(n)}$, and thus having Cayley-Hamilton polynomial~$f_a$ of
degree~$n$, or (via left multiplication) as a linear transformation
$\tilde a$ on the $n^2$-dimensional space $\tilde V = \M[n](K)$ with
Cayley-Hamilton polynomial~$f_{\tilde a}$ of degree~$n^2$. The
matrix $\tilde a$ can be identified with the matrix $$a \otimes I
\in \M[n](K) \otimes \M[n](K) \cong \M[n^2](K),$$ so its eigenvalues
have the form $\beta \otimes 1 = \beta$ for each eigenvalue $\beta$
of $a$. But there are only finitely many components in the
representation, so $\tilde a$ is algebraic in $M_{\tilde n}(K).$

We recall a basic observation of Zariski and  Samuels:

\begin{lem}\label{ZarS}
Any characteristic
coefficient of an element which is integral over a commutative ring $C$,
is itself integral over $C$.
\end{lem}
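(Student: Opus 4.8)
The plan is to reduce the statement to the classical fact that, over a commutative ring, the coefficients of the characteristic polynomial of an element integral over $C$ lie in the integral closure of $C$. Let $a$ be an element of an algebra $B$ which is integral over the commutative ring $C \subseteq B$; we want to show each characteristic coefficient of $a$ is integral over $C$. The first step is to recall what ``characteristic coefficient'' means in the sense used in this section: $a$ acts by left multiplication on $B$ as a $C$-linear transformation $\tilde a$, and provided $B$ is finitely generated (or at least generated by finitely many elements relevant to $a$) over $C$, the Cayley--Hamilton polynomial $f_{\tilde a}$ is well-defined and monic; its non-leading coefficients are the characteristic coefficients. So the content is: these coefficients are integral over $C$.

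The second step is the key reduction. Since $a$ is integral over $C$, there is a monic polynomial $p(\lambda) = \lambda^r + c_{r-1}\lambda^{r-1} + \cdots + c_0 \in C[\lambda]$ with $p(a) = 0$. Consider the subring $C' = C[a] \subseteq B$; this is a commutative ring, finitely generated as a $C$-module by $1, a, \dots, a^{r-1}$, hence finite over $C$. Now pass to an integral extension in which $p$ splits: let $\bar C \supseteq C$ be a ring (e.g. obtained by adjoining roots of $p$, or a localization of the integral closure inside a large enough extension) over which $p(\lambda) = \prod_{i=1}^r (\lambda - \beta_i)$ with each $\beta_i$ integral over $C$. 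The eigenvalues of $\tilde a$ on $B$, as explained in the paragraph preceding the lemma via the tensor-product identification $\tilde a \leftrightarrow a \otimes I$, are among the $\beta_i$ (with multiplicity). Therefore the characteristic coefficients of $\tilde a$ are, up to sign, the elementary symmetric functions of these eigenvalues $\beta_i$, i.e. polynomials with integer coefficients in the $\beta_i$. Since each $\beta_i$ is integral over $C$ and the integral closure of $C$ in any commutative extension is a subring, every such symmetric polynomial in the $\beta_i$ is integral over $C$. This yields the claim.

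The step I expect to be the main obstacle — and the only point requiring genuine care rather than bookkeeping — is making the ``eigenvalues of $\tilde a$ are among the $\beta_i$'' argument rigorous without assuming $C$ is a field, since over a general commutative ring one cannot literally diagonalize. The clean way around this is to avoid eigenvalues entirely and argue directly with characteristic polynomials: the characteristic polynomial of left-multiplication by $a$ on the free-ish module $C'$ divides a power of $p(\lambda)$ (because $p(a) = 0$ forces the minimal polynomial, and hence the characteristic polynomial, of $\tilde a$ to have all its irreducible factors among those of $p$), so $f_{\tilde a}(\lambda) \mid p(\lambda)^N$ in $\bar C[\lambda]$ for suitable $N$; a monic divisor of a product of monic linear factors $\prod (\lambda - \beta_i)^N$ has all its coefficients equal to $\mathbb{Z}$-polynomial combinations of the $\beta_i$, hence integral over $C$. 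Alternatively, one can invoke directly the standard theorem (see e.g. Atiyah--Macdonald, or Bourbaki) that if $M$ is a faithful finitely generated $C$-module and $\varphi$ is a $C$-endomorphism, then the coefficients of the characteristic polynomial of $\varphi$ lie in $C$ itself when $M$ is free, and in the integral closure in general; applied to $M = C'$ this is exactly the assertion. Either route closes the proof.
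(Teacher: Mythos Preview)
Your proposal is correct and follows essentially the same route as the paper: both argue that the characteristic coefficients are (symmetric) polynomial expressions in the roots of the minimal polynomial of the given element, and since those roots are integral over $C$, so are the characteristic coefficients. The paper's proof is a one-line sketch of exactly this idea, whereas you have supplied the details (splitting $p$, the divisibility $f_{\tilde a} \mid p^N$, the subring property of integral closure) that make it work over a general commutative ring rather than a field.
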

\begin{proof} If $\a $ denotes the characteristic
coefficient, then $\a $ is generated by powers of roots of the
minimal polynomial of the given element.
\end{proof}

 From this, we
conclude:

\begin{prop}[{\cite[Proposition~2.4]{BRV3}}]\label{obv2}
Suppose $a \in \M[n](F)$. Then the characteristic coefficients of
$a$ are integral over the $F$-algebra ${\mathcal C}_a$ generated by
the characteristic coefficients of $\tilde a$.
\end{prop}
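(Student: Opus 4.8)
\textbf{Proof proposal for Proposition~\ref{obv2}.} The plan is to exhibit, for each characteristic coefficient $\a$ of $a$, an explicit monic polynomial over $\mathcal C_a$ that it satisfies, using \Lref{ZarS} as the main black box. First I would recall that the characteristic coefficients of $\tilde a = a \otimes I$, being the elementary symmetric functions of the eigenvalues of $\tilde a$, are exactly the elementary symmetric functions of the eigenvalues $\beta_1, \dots, \beta_n$ of $a$, each taken with multiplicity $n$ (since the eigenvalues of $a \otimes I \in \M[n](K) \otimes \M[n](K)$ are $\beta_i \otimes 1 = \beta_i$, appearing $n$ times each). Concretely, $f_{\tilde a}(\lambda) = \prod_i (\lambda - \beta_i)^n = f_a(\lambda)^n$. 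Thus the subalgebra $\mathcal C_a$ is generated by the coefficients of $f_a^n$, and these are polynomial expressions in the $\beta_i$.

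The key step is then: each eigenvalue $\beta_i$ of $a$ is integral over $\mathcal C_a$. This is where I would invoke the classical Zariski--Samuels observation of \Lref{ZarS} (applied in reverse direction, or rather its underlying mechanism): the $\beta_i$ are roots of $f_a(\lambda)$, whose $n$-th power $f_a^n$ has coefficients in $\mathcal C_a$. Since $f_a^n$ is monic with coefficients in $\mathcal C_a$ and $f_a(\beta_i) = 0$ implies $f_a(\beta_i)^n = 0$, each $\beta_i$ is a root of the monic polynomial $f_a(\lambda)^n \in \mathcal C_a[\lambda]$, hence integral over $\mathcal C_a$. Consequently the integral closure of $\mathcal C_a$ in $K$ contains all the $\beta_i$, and therefore contains every polynomial expression in the $\beta_i$ — in particular every characteristic coefficient of $a$, which is an elementary symmetric function of the $\beta_i$. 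Since being integral is transitive and the integrally closed elements form a ring, each characteristic coefficient of $a$ is integral over $\mathcal C_a$, which is the claim.

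The main obstacle, such as it is, is bookkeeping: one must be careful that $\mathcal C_a$ is genuinely an $F$-algebra (so that $F \subseteq \mathcal C_a$ and the eigenvalues lie in a field extension over which everything is defined), and that the identification $f_{\tilde a} = f_a^n$ is correct when $a$ has repeated eigenvalues or is non-semisimple — but this causes no trouble because the \emph{characteristic} polynomial (as opposed to the minimal polynomial) of $a \otimes I$ is always $f_a(\lambda)^n$ regardless of Jordan structure, and integrality only needs the characteristic polynomial. A cleaner alternative, avoiding eigenvalues entirely, would be to note that $a$ itself is integral over $\mathcal C_a$ (being a root of $f_{\tilde a}(\lambda) = f_a(\lambda)^n \in \mathcal C_a[\lambda]$, since $f_a(a) = 0$ by Cayley--Hamilton hence $f_a(a)^n = 0$), and then apply \Lref{ZarS} directly to the element $a$ over the ring $\mathcal C_a$ to conclude that every characteristic coefficient of $a$ is integral over $\mathcal C_a$. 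This second route is the shortest and I would present it as the main argument, with the eigenvalue computation relegated to a remark.
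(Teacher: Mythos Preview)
Your proposal is correct and follows essentially the same approach as the paper: both argue that the eigenvalues of $\tilde a$ (which coincide with those of $a$) lie in the integral closure of $\mathcal C_a$, whence the characteristic coefficients of $a$, being symmetric functions of these eigenvalues, are integral over $\mathcal C_a$. Your alternative route via \Lref{ZarS} applied directly to the element $a$ is a neat shortcut, but it is the same idea packaged slightly differently.
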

\begin{proof}
The integral closure of ${\mathcal C}_a$ contains all the
eigenvalues of $\tilde a,$ which are the eigenvalues of $a,$ so the
characteristic coefficients of $\tilde a$ also belong to the
integral closure.
\end{proof}

We are about to adjoin (finitely) many integral elements. Recall
from \Rref{Noethind} that $A_0$ is a ``minimal representable cover"
of the algebra~$A$. We can define the characteristic coefficients
 via polynomials.

%

%
%
%


\begin{defn}\label{absorp} Given a quasi-linear polynomial $f(x;y)$ in indeterminates
labeled~$x_i,y_i$, we say~$f$ is {\bf characteristic
coefficient-absorbing} with respect to its full quiver~$\Gamma$ if
the linear span of $f(\widetilde {A_0})$
 absorbs multiplication by any characteristic coefficient of any
 element in each bonded (diagonal)
 matrix block. 
 \end{defn}

\begin{rem}\label{CHrad} In view of  Proposition~\ref{closed1} we may use characteristic
coefficients for semisimple elements. We recall that we are working
in characteristic $p>0.$ In order to guarantee that the semisimple
substitutions  are indeed semisimple as matrices, we take the Jordan
decomposition of the matrix $a = s+r$ where $s$ is semisimple and
$r$ is nilpotent with $sr = rs,$ and then observe that if $r^k = 0$
and $\bar q$ is a $p$-power greater than~$k$, then
$$a = (s+r)^{\bar
q} = s^{\bar q} + r^{\bar q} = s^{\bar q}+0 = s,$$ which is
semisimple. This leads us to take ${\bar q}$-powers of matrices, and
$\bar q$-powers of characteristic coefficients.
\end{rem}

\begin{lem}[as in {\cite[Lemma~3.6]{BRV4}}]\label{q1}
Write an $\tilde n$-alternating polynomial~$f$ as a sum of
homogeneous components $\sum f_j$. Each $f_j$ is characteristic
coefficient absorbing in the blocks of degree $\tilde n$.
\end{lem}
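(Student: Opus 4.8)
The plan is to exploit Amitsur's classical trick for extracting characteristic coefficients from alternating polynomials, applied block-by-block to the Wedderburn blocks of degree $\tilde n$, combined with the homogeneity reduction already available from Section~\ref{trab}. First I would fix a block $R_r \cong \M[\tilde n](K)$ of degree $\tilde n$ among the Wedderburn blocks, and recall that since $f$ is $\tilde n$-alternating, for each homogeneous component $f_j$ the number of alternating indeterminates that can receive a nonzero substitution in $R_r$ is exactly $\tilde n^2 = \dim_K R_r$ (more can be absorbed only if a fold is larger, but for a genuine $\tilde n$-alternating set the count is $\tilde n^2$, matching the Capelli situation recalled after Definition~\ref{alt4}). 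The key point is that once we restrict to a single homogeneous component $f_j$, all monomials have the same matrix degree, so there is no ambiguity in how many slots in $R_r$ the alternating indeterminates occupy.

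The main step is then Amitsur's device: substituting, into one of the alternating slots, a product $a \cdot e$ where $a$ is an arbitrary element of $\widetilde{A_0}$ of matrix degree $\tilde n$ and $e$ ranges over a basis of $R_r$, and summing appropriately, one converts $f_j$ evaluated on $a$ together with a fixed alternating frame into a linear combination whose coefficients are precisely the characteristic coefficients of the image of $a$ in $R_r$ (this is the content of \cite[Lemma~3.6]{BRV4}, cited in the statement). Concretely: if $g_1,\dots,g_{\tilde n^2}$ are the alternating indeterminates landing in $R_r$, then $\sum_{\sigma} \sgn(\sigma)\, f_j(\dots, a g_{\sigma(1)}, g_{\sigma(2)}, \dots)$ produces $\tra(a)$ times $f_j(\dots,g_1,\dots)$ plus lower terms, and iterating with higher symmetric functions — i.e. inserting several copies of $a$ — yields all the characteristic coefficients $\alpha_k(a)$ of the block component of $a$. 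Hence the linear span of $f_j(\widetilde{A_0})$ absorbs multiplication by each such $\alpha_k(a)$ in the block of degree $\tilde n$, which is exactly Definition~\ref{absorp} restricted to that block; running over all blocks $R_r$ of degree $\tilde n$ finishes it. One should note that since we are in characteristic $p$, the semisimplicity needed to make ``characteristic coefficients'' meaningful for the substituted matrices is supplied by Remark~\ref{CHrad}: we replace $a$ by $a^{\bar q}$ to kill the nilpotent part, so it is really the $\bar q$-powers of the characteristic coefficients that get absorbed, but that is harmless since the $\bar q$-power map is a ring endomorphism on the commutative algebra generated by them.

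The step I expect to be the main obstacle is the bookkeeping around \emph{which} alternating slots of a given monomial actually land in the fixed block $R_r$ versus in other blocks or in bridges — a priori the alternating set can scatter, and the Amitsur argument only cleanly produces characteristic coefficients of $R_r$ when the full alternating frame sits inside $R_r$. This is precisely why the reduction to homogeneous components $f_j$ is essential: within $f_j$ every monomial has the same matrix degree $\tilde n$, so by the pigeonhole/dimension count the $\tilde n$-alternating indeterminates that give a nonzero value on a substitution through $R_r$ must all be semisimple in $R_r$ (any of them landing outside would force a repeated Peirce component and kill the alternating sum, or exceed the available dimension). Once that is pinned down, the rest is the standard Amitsur manipulation. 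I would also remark that, as in \cite[Lemma~3.6]{BRV4}, one does not need $f$ itself to be homogeneous — the decomposition $f = \sum f_j$ is free — which is exactly what the lemma asserts, so no extra hypothesis on $f$ is required beyond $\tilde n$-alternation.
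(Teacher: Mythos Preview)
Your approach is the same as the paper's: both invoke Amitsur's identity expressing $\alpha_k \cdot f_j$ as a sum of values of $f_j$ with some alternating arguments premultiplied by $a$. The paper's proof is a single line, namely equation~\eqref{traceab0},
\[
\alpha_k\, f_j(a_1,\dots,a_t,r_1,\dots,r_m)\;=\;\sum_{\substack{k_1+\cdots+k_t=k\\ k_i\in\{0,1\}}} f_j(T_a^{k_1}a_1,\dots,T_a^{k_t}a_t,r_1,\dots,r_m),
\]
cited from \cite[Theorem~J]{BR}, and nothing more.

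Two small corrections to your write-up. First, your displayed formula $\sum_\sigma \sgn(\sigma)\,f_j(\dots,ag_{\sigma(1)},g_{\sigma(2)},\dots)$ is miswritten: the alternation is already built into $f_j$, so no external signed permutation sum is needed; the correct identity is simply to sum over the \emph{subsets} of alternating slots that receive a factor of $a$, as in~\eqref{traceab0}. Second, your extended discussion of which alternating slots land in $R_r$, and of the $\bar q$-power semisimplification from Remark~\ref{CHrad}, is not needed for the lemma itself: identity~\eqref{traceab0} is purely formal and holds for any $t$-alternating $f_j$ and any $T_a$ acting on the span of the $a_i$, so ``characteristic coefficient absorbing'' follows immediately. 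Those concerns are relevant later, when the lemma is \emph{applied} (e.g.\ in Proposition~\ref{modhike1} and \S\ref{stillfield}), but the proof of Lemma~\ref{q1} does not require them.
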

\begin{proof}
The proof can be formulated in the language of \cite[Theorem~J,
Equation~1.19, page~27]{BR} (with the same proof), as follows,
writing $T_{a,j}$ for the transformation given by left
multiplication by $a$:
\begin{equation}\label{traceab0}
\alpha_k f_j(a_1, \dots, a_t, r_1, \dots, r_m) = \sum
f_j(T_a^{k_1}a_1, \dots, T_a^{k_t}a_t, r_1, \dots,
r_m),\end{equation} summed over all vectors $(k_1, \dots, k_t)$ with
each $k_i \in \{ 0, 1\}$ and $k_1 + \dots + k_t = k,$ where
$\alpha_k $ is the $k$-th characteristic coefficient of a linear
transformation $T_a: V \to V.$
\end{proof}


%

Since the sole purpose of the hypotheses of  Lemma~\ref{q1} was to
obtain the conclusion~\eqref{traceab0}, we merely
assume~\eqref{traceab0}.
\begin{lem}\label{q2} For any  polynomial $f(x_1, x_2,
\dots)$ quasi-linear in $x_1$ with respect to a matrix algebra
$\M[n](F)$, satisfying \eqref{traceab0}, there is a homogeneous
component $\hat f $ in the T-ideal generated by~$f$ which is
characteristic coefficient absorbing.
\end{lem}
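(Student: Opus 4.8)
The plan is to extract from $f$ a single homogeneous component in which the variable $x_1$ genuinely alternates with enough auxiliary variables, and then invoke the identity \eqref{traceab0} on that component. First I would write $f = \sum_j f_j$ as the decomposition into homogeneous components, where $f_j$ has a fixed matrix degree on each monomial and (since we are quasi-linear) $x_1$ occurs with a fixed $p$-power degree $d_j$ in $f_j$. The obstacle is that quasi-linearity in $x_1$ is not the same as linearity: $x_1$ may occur to a $p$-power, and \eqref{traceab0} was stated for a variable $a_1$ entering linearly in an alternating set. So the first real step is to pass, inside the T-ideal of $f$, to a polynomial in which the slot playing the role of $x_1$ is linear. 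One does this by the partial-linearization step $\Delta_1$ of \eqref{partlin}, or equivalently by substituting $x_1 \mapsto x_{1,1} + \cdots + x_{1,d_j}$ and collecting the fully multilinear-in-these-new-variables part; since $f$ is quasi-linear in $x_1$, this part is again a non-identity in the same T-ideal, and now each of the new variables $x_{1,r}$ occurs linearly.

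Next I would arrange that these linear occurrences form an alternating set against which characteristic coefficients can be absorbed. Concretely, following Remark~\ref{fold}, replace one linearly-occurring variable by $x_{1,1}\, c_m(x_{n+1},\dots,x_{n+m^2})$ (or insert a fold into a central polynomial $h_{m,1}$), producing a homogeneous component $\hat f$ in the T-ideal of $f$ which genuinely $m$-alternates in a set of $m^2$ variables over the block $\M[m](F)$. This is exactly the hypothesis of Lemma~\ref{q1}: once $\hat f$ alternates in such a set, the combinatorial identity \eqref{traceab0} applies verbatim to $\hat f$, with $T_a$ the left-multiplication transformation of $a \in \M[m](F)$ on $V$. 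Taking $a$ to range over the semisimple substitutions of Proposition~\ref{closed1}, \eqref{traceab0} exhibits $\alpha_k \hat f(\dots)$ as an $F$-linear combination of values of $\hat f$, so the linear span of $\hat f(\widetilde{A_0})$ absorbs multiplication by every characteristic coefficient $\alpha_k$; that is, $\hat f$ is characteristic coefficient absorbing in the sense of Definition~\ref{absorp}.

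The main obstacle, and the reason this lemma is stated separately from Lemma~\ref{q1}, is the bookkeeping at the interface between quasi-linearity and genuine alternation: one must check that after $\Delta_1$-linearization and after inserting the Capelli/central fold, the resulting $\hat f$ is still a non-identity of $\widetilde{A_0}$ — i.e. that no cancellation of the relevant substitutions occurs (cf.~Remark~\ref{dombr}(2)) — and that it still lies in the T-ideal generated by $f$ with the same matrix degree $\tilde n$ on all monomials. For the non-identity claim one uses that $\widetilde{A_0}$ has a block of degree $\tilde n = m$ where $c_{m}$ (hence the inserted fold) is a non-identity, together with the working substitution already fixed; for membership in the T-ideal one only ever substitutes polynomials into indeterminates of $f$, which keeps everything inside the T-ideal. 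Once these two points are verified, \eqref{traceab0} does all the remaining work, and picking $\hat f$ to be the surviving homogeneous component finishes the proof.
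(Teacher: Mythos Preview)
Your proposal has a genuine gap in its first step. You try to pass from quasi-linearity in $x_1$ to honest linearity by applying the partial linearization $\Delta_1$ of \eqref{partlin} (equivalently, substituting $x_1\mapsto x_{1,1}+\cdots+x_{1,d}$ and keeping the fully multilinear part). But this is precisely what quasi-linearity rules out: by Definition~\ref{QL}, $f$ being quasi-linear in $x_1$ on $\widetilde{A_0}$ means $f(\ldots,a+a',\ldots)=f(\ldots,a,\ldots)+f(\ldots,a',\ldots)$ for all substitutions, so $\Delta_1 f$ (and hence the multilinear part you extract) vanishes identically on $\widetilde{A_0}$. The polynomial you obtain is therefore an identity of $\widetilde{A_0}$, not a non-identity, and cannot serve as $\hat f$. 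This is exactly the reason quasi-linearization is used as a substitute for multilinearization in characteristic $p$: the multilinear part is lost there.

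The detour through linearization and Capelli insertion is in any case unnecessary, because you are not using the hypothesis. The lemma assumes that $f$ \emph{already} satisfies \eqref{traceab0}; there is no need to manufacture an alternating set in order to invoke that formula. The paper's argument is accordingly one line: since \eqref{traceab0} is assumed, the proof of Lemma~\ref{q1} applies verbatim and each homogeneous component $f_j$ is characteristic coefficient absorbing; one then isolates a single $f_j$ inside the T-ideal by zeroing out the substitutions of the remaining components.
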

\begin{proof} Take the polynomial of Lemma~\ref{q1}, after we zero
out the substitutions of all but one of the components.
\end{proof}

\begin{rem}\label{CHid}
Notation as in \eq{traceab0}, where $f = f_j$, the Cayley-Hamilton
identity for $n \times n$ matrices is
$$\begin{aligned} 0 & = \sum_{k=0}^{n} (-1)^k
\alpha_k f(a_1, \dots, a_t, r_1, \dots, r_m) \la ^{n-k}  \\ & =
 \sum_{k=0}^{n} (-1)^k \sum_{k_1+\dots+k_t= k} f(T_a^{k_1}a_1, \dots, T_a^{k_t}a_t, r_1, \dots,
r_m) \la ^{n-k},\end{aligned}$$ which is thus an identity in the
T-ideal generated by~$f$.
\end{rem}

\begin{defn}\label{HCind}
We call this identity $$\sum_{k=0}^{n} (-1)^k \sum_{k_1+\dots+k_t=
k} f(T_a^{k_1}a_1, \dots, T_a^{k_t}a_t, r_1, \dots, r_m) \la
^{n-k}$$   the {\bf Cayley-Hamilton identity induced by~$f$}.
\end{defn}

\subsection{Controlling the action of characteristic
coefficients}\label{contact}$ $

\begin{defn}\label{6.7}
Fixing $0 \le k <n,$ we denote the $k$-th characteristic coefficient
of $a$, as $\aqpol(a)$.
\end{defn}

Now we  use hiking   to force the polynomial-defined characteristic coefficients of
the matrices to commute with each other.

\begin{prop} \label{modhike1}
One can hike~$f$ such that the characteristic coefficients
$\aqpol(a)$ of any matrix evaluation commute with each
other.\end{prop}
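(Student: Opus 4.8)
The plan is to realize each characteristic coefficient $\aqpol(a)$ as the effect of a \emph{substitution action} on a bonded polynomial, via the identity \eqref{traceab0}, and then to stack these substitutions so that two characteristic coefficients $\aqpol(a)$, $\apol^{\bar q'}(a')$ act on two \emph{disjoint} bonds of a suitably hiked polynomial. Concretely, starting from the critical bonded polynomial $f$ produced by Theorem~\ref{hikthm7} (length $d \ge t+1$), I would first hike $f$ further, inserting additional copies of $h_{\tilde n}$ (by the bonding step $z_i \mapsto h_{\tilde n}z_i$, $z_i'\mapsto z_i'h_{\tilde n}$), so that it has at least two extra bonds beyond those already committed to carving out the radical action. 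Each such bond $h_{\tilde n}(y^{(r)})$ is an $\tilde n$-alternating block in its own disjoint set of indeterminates, and by Lemma~\ref{q1} (in the form \eqref{traceab0}) applying the operator $\sum f(\dots,T_a^{k_i}(\cdot),\dots)$ to the variables occupying bond $r$ multiplies the value by the chosen characteristic coefficient of $a$ at that bond.

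The key point is that because the two bonds live in disjoint variable sets, the two operations commute \emph{as operations on the polynomial}: performing the $\apol$-absorbing summation on bond $1$ and then the $\apol^{\bar q'}$-absorbing summation on bond $2$ gives the same polynomial (up to reindexing the summations) as doing them in the other order, since the substitutions $T_a^{k_i}$ touch different $x$'s. Evaluating this twice-summed polynomial on a matrix $a \in \M[\tilde n](K)$ then yields, on the left span $f(\widetilde{A_0})$, the scalar $\aqpol(a)\,\apol^{\bar q'}(a)$ in one order and $\apol^{\bar q'}(a)\,\aqpol(a)$ in the other — but both arise from the \emph{same} polynomial, hence from the same element of the linear span. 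Since this holds for every matrix evaluation and every pair of indices $k,k'$, we conclude that the characteristic coefficients $\aqpol(a)$ of any fixed matrix evaluation $a$ commute with one another inside ${\mathcal C}_a$, which is the assertion. The $\bar q$-powers are harmless here: by Remark~\ref{CHrad} we are working with $\bar q$-powers of the semisimple parts and of the characteristic coefficients throughout, and $\bar q$-th powering is a ring endomorphism of the relevant commutative algebra, so commutativity is preserved.

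The main obstacle I anticipate is \emph{bookkeeping the repetitions}: as noted in the remark following Definition~\ref{doccr}, when an original indeterminate $x_i$ occurs more than once in $f$ the atom (and hence the bond $h_{\tilde n}(y)$ attached to it) repeats, so the ``disjoint variable set'' for a bond may in fact recur in several monomials with a different neighbouring context. One must check that the absorption identity \eqref{traceab0} is still applied consistently — i.e. that the $T_a$-summation is performed simultaneously on all occurrences of that bond's variables — and that after doing so the two bond-summations genuinely decouple rather than interfering through a shared occurrence. A second, more minor, point is to verify that the extra bonding hikes do not destroy criticality (all $y$-substitutions still right): this follows because bonding only inserts extra $h_{\tilde n}$ factors in fresh indeterminates, and by Lemmas~\ref{expans1}--\ref{expans2} every semisimple $\tilde n$-substitution and every $\tilde n$-bridge is already forced into its correct position, so the new $h_{\tilde n}$'s evaluate to nonzero scalars on exactly the same working substitution. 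Once these two checks are in place, the commutativity is immediate from the symmetry of the two disjoint summations.
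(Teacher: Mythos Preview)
Your disjoint-bond argument is formally correct but it proves a triviality rather than the content of the proposition. You show that the absorption summation \eqref{traceab0} applied at bond~1 commutes with the one applied at bond~2 because they act on disjoint variable sets; upon evaluation this yields $\aqpol(a)\cdot\apol^{\bar q'}(a')\cdot f(\bar x)=\apol^{\bar q'}(a')\cdot\aqpol(a)\cdot f(\bar x)$. But if $\aqpol(a)$ and $\apol^{\bar q'}(a')$ are read as scalars in $K$ this equality is automatic, and if they are read as the \emph{substitution operators} ``absorb at bond~$r$'' then you have only shown that the bond-1 operator commutes with the bond-2 operator, not that the bond-1 operator for $a$ commutes with the bond-1 operator for $a'$. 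The point of the proposition is the latter: different monomials of $f$ lie on branches with different multiplicities and Frobenius twists, so absorbing at a fixed bond produces different powers $\alpha_k^{\bar q}$ on different pieces (this is exactly equation~\eqref{CC=CC}), and the resulting operator on the span is not a global scalar. Nothing in your argument forces these branch-dependent discrepancies to cancel, and in particular it does not yield the conclusion drawn right after the proposition, that ``the substitution action now is well-defined on any single monomial of~$f$''.

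The paper's proof gets at this by an iterative hiking step that your proposal lacks. One forms the commutator expression~\eqref{hikemore}, observes that it vanishes identically on every diagonal block (there the absorption genuinely is scalar, so the commutator is zero), and then \emph{substitutes \eqref{hikemore} for $x_i$ inside $\hat f$}. If the result is still a nonidentity of $\widetilde{A_0}$, the non-commutativity is being witnessed only through radical substitutions at that position; one hikes again. Each such step pushes another level into the radical, so the process terminates after finitely many rounds, and the terminal polynomial has the property that the commutator substitution is an identity --- precisely the statement that the polynomial-defined characteristic-coefficient operators commute on it. Adding extra bonds alone does not drive toward such a terminal polynomial.
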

\begin{proof} Take  homogeneous $\hat f$ of
Lemma~\ref{q2}, and one more indeterminate $y''$. There is a Capelli
polynomial $\tilde c_{n_i^2}(y''): = \tilde c_{n_i^2}(y'', \dots)$
and $p$-power $\bar q$ such that
\begin{equation}\label{CC=CC}
\tilde c_{n_i^2}( \a_k y'') x_i c_{{n'_i}^2}(y'') = \a_k ^{\bar
q}(y_1)c_{n_i^2}(y'') x_i c_{{n'_i}^2}(y'')\end{equation} on any
diagonal block. Since characteristic coefficients commute on any
diagonal block, we see from this that
\begin{equation}\label{hikemore}\tilde c_{n_i^2}(y'') x_i
c_{{n'_i}^2}(y'')\tilde c_{n_i^2}(z) x_i c_{{n'_i}^2}(z) - \tilde
c_{n_i^2}(z) x_i c_{{n'_i}^2}(z) \tilde c_{n_i^2}(y'') x_i
c_{{n'_i}^2}(y'')\end{equation} vanishes identically on any diagonal
block, where $z = \a_k y''$. One concludes from this that
substituting~\eqref{hikemore} for $x_i$ would hike $\hat f$ one step
further. But there are only finitely many ways of performing this
particular hiking procedure. Thus, after a finite number of hikes,
we arrive at a polynomial in which we have complete control of the
substitutions, and the characteristic coefficients defined via
polynomials commute.
\end{proof}

\begin{notation}\label{not63}

Let $\mathcal S$ denote the finite set of products  (of length up to
the bound of Shirshov's Theorem~ \cite[Chapter 2]{BKR}) of components
according to the Peirce decomposition (sub-Peirce components when
considering rings without 1) of the generic generators of $A_0$.

 Let  ${\mathcal C}$ be the algebra obtained by adjoining to $F$
 the
 characteristic coefficients of the elements of $\mathcal S$, and
$\widehat {A_0}=   \widetilde {A_0} \otimes _F {\mathcal C},$  the
algebra obtained by adjoining these characteristic coefficients  to
$\widetilde {A_0}$.

 We introduce a commuting indeterminate
$\la_i$ for each of these finitely many characteristic coefficients
$\alpha _i,$ $i \in I$, define $\mathcal{C}'$ to be ${{\mathcal
C}}\,[\la_i : i \in I]$, and $A' $ to be ${\widehat{ A_0}}\,[\la_i :
i \in I]$.
\end{notation}

In this way, after hiking, the substitution action now is
well-defined on any single
 monomial of our hiked polynomial~$f$, and is integral over the action
involving a single component. Our total degree of integrality could be huge (although it is bounded, since the degree for each element is
bounded).

\subsection{Closed submodules}\label{cltrab}$ $

\begin{defn}\label{trmat00}
We take
$\mathcal{C}$ and $\widetilde {A_0}$ from Notation~\ref{not63}, noting that  ${\mathcal C}$ is central.
An ideal $U$ of $\widetilde {A_0}$ is \textbf{closed} if
$\mathcal{C} U = U$, i.e. if $U$ absorbs multiplication by elements
of ${\mathcal C}$.
\end{defn}

We just saw in principle how to obtain closed T-ideals using a
dominant branch, but we have to contend also with pseudo-dominant
branches from~$f$. This is tricky, since we must contend with
different degrees of $\tilde n$. Even in the homogeneous case,
different monomials might define different actions, so we do not
have a single action that we can mod out.  But, even worse, for
nonhomogeneous polynomials~$f$,
 different components of the same matrix degree
$\tilde n$ may occur with different multiplicities in the monomials
of the relatively free algebra $A$, so the characteristic
coefficient arguments of Proposition~\ref{modhike1} may work
differently for  different monomials of~$f$. In order to identify
these actions it is conceptually clearer to bring in another
viewpoint of the coefficients of the characteristic polynomial, and
unify it with these other actions.

 The T-ideal~$\CI$
generated by the hiked polynomial~$f$ contains a nonzero T-ideal
which is also an ideal of the algebra~$\widehat{ A_0}$.
 In principle, we shall use Shirshov's Theorem~
\cite[Chapter 2]{BKR} to produce closed ideals, in order to extend
our representation of $\widehat{ A_0}$.
 In view of Shirshov's
theorem we only need to adjoin a finite number of elements to
obtain~${\mathcal C}.$

\subsubsection{Symmetrized characteristic
coefficients}$ $

Our discussion in adjoining characteristic coefficients   involves
ambiguities arising from different branches. We could bypass these
by making identifications in Definition~\ref{idntif} below, but it
seems clearer to identify everything with the following notion, in
view of Lemma~\ref{symcoef} below.

\begin{defn}\label{sym} Given matrices $a_1, \dots, a_t,$ the \textbf{symmetrized} $(k;j)$ characteristic coefficient is the $j$-elementary symmetric function applied to the $k$-characteristic
coefficients of $a_1, \dots, a_t.$ \end{defn} For example, taking
$k=1$, the symmetrized $(1,j)$-characteristic coefficients $\a_t$
are
$$\sum_{j=1}^t \tr(a_j),\quad \sum_{j_1 <j_2} \tr     (a_{j_1})\tr(a_{j_2}),\quad \dots , \quad \prod_{j=1}^t \tr(a_{j}).$$

\begin{lem}\label{symcoef} Any characteristic coefficient $\a _k$ is integral over the
ring with all the symmetrized characteristic coefficients
adjoined.\end{lem}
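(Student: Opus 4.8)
The statement asserts that each individual $k$-characteristic coefficient $\a_k = \a_k(a_i)$ of one of the matrices $a_1,\dots,a_t$ is integral over the ring $R$ obtained by adjoining to the base all the symmetrized $(k;j)$-characteristic coefficients (for $j = 1,\dots,t$). The plan is to exploit the classical fact that the elementary symmetric functions of a collection of quantities determine those quantities up to permutation — equivalently, each quantity is a root of the polynomial whose coefficients are the elementary symmetric functions. First I would fix $k$ and consider the $t$ scalars $\a_k(a_1),\dots,\a_k(a_t)$. By Definition \ref{sym}, the symmetrized $(k;j)$-characteristic coefficient is precisely $e_j(\a_k(a_1),\dots,\a_k(a_t))$, the $j$-th elementary symmetric polynomial in these scalars, and all of these lie in $R$ by construction.

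The key step is then to form the monic polynomial
\begin{equation*}
p(\lambda) \;=\; \prod_{i=1}^{t}\bigl(\lambda - \a_k(a_i)\bigr) \;=\; \sum_{j=0}^{t} (-1)^j e_j\bigl(\a_k(a_1),\dots,\a_k(a_t)\bigr)\,\lambda^{t-j},
\end{equation*}
whose coefficients are, up to sign, exactly the symmetrized characteristic coefficients, hence lie in $R$. Each $\a_k(a_i)$ is a root of $p(\lambda)$, so $\a_k(a_i)$ is integral over $R$. Since this holds for every index $i$ and every fixed $k$ (with $0 \le k < n$, as in Definition \ref{6.7}), every characteristic coefficient of every $a_i$ is integral over the ring with all symmetrized characteristic coefficients adjoined, which is the assertion. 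One should note that to make sense of "$\a_k(a_i)$" uniformly one works inside a suitable commutative ring containing all these coefficients (e.g. the coordinate ring of the relevant generic matrices, or a field extension splitting all the characteristic polynomials), and the integrality conclusion then descends; this is the same bookkeeping already used in Lemma \ref{ZarS} and Proposition \ref{obv2}.

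The only mild obstacle is notational rather than mathematical: the symmetrization in Definition \ref{sym} is taken over the matrices $a_1,\dots,a_t$ that occur in the alternating slots of $f$, so one must be a little careful that "the $k$-characteristic coefficients of $a_1,\dots,a_t$" are being treated as $t$ separate scalars over which $e_j$ is applied, and that these scalars live in a common ambient commutative ring so that the factorization of $p(\lambda)$ is legitimate. Once that is set up, the proof is just the observation that a number is integral over the ring generated by the elementary symmetric functions of a finite set containing it — there is no real difficulty, and the lemma serves only to let us replace the ambiguous family of individual characteristic coefficients by the canonical symmetrized ones in the subsequent construction.
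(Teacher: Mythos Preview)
Your proof is correct and follows essentially the same approach as the paper: form the monic polynomial $\prod_{i=1}^{t}(\lambda - \a_k(a_i))$ whose coefficients are, up to sign, the symmetrized $(k;j)$-characteristic coefficients, and observe that each $\a_k(a_i)$ is a root. The paper's version is a terse one-liner expressing the same idea (with a minor typo in the indices), while you have spelled out the details more carefully.
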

\begin{proof} If $\a _{k,j}$ denotes the $(n;j)$-characteristic
coefficient, then $\a _k$ satisfies the usual polynomial $\la ^ n+
(-1)^j \sum _{j=1}^n \a _{k,j} \la ^{n-j}.$
\end{proof}

\subsubsection{Computing the action of characteristic
coefficients}\label{contact1} $ $
%
%

If the vertex corresponding to $r$ has matrix degree $n_i$, taking
an $n_i \times n_i$ matrix $w$, we define ${\aqpol}_u(w)$ as in the
action of Definition~\ref{6.7} and then the left action
\begin{equation}\label{mtr1}
a_{u,v} \mapsto {\aqpol}_u (w) a_{u,v}.
\end{equation}
Likewise, for an $\tilde n \times \tilde n$ matrix $w$ we define the
right action
\begin{equation}\label{mtr2} a_{u,v} \mapsto {\aqpol}_v (w) a_{u,v} .
\end{equation}
(We only need the action when the vertex is non-empty; we forego the
action for empty vertices.)

This action repeats according to the multiplicity $m$ of the vertex,
and so for each given $u,v$ we take the left and right
multiplication operators $$ \phi^\ell: a_{u,v} \mapsto {\aqpol}_u
(w)^{m}
 a_{u,v}
 , \qquad \phi^r : a_{u,v} \mapsto {\aqpol}_v
(w)^{m}
 a_{u,v}$$ inside the endomorphism algebra of the
module containing all the substitutions in dominant branches. This
gives us an action on the right substitutions of branches.

We need to consider the endomorphism algebra and its invariants, in
order to cope with possible cancellation in symmetric expressions in
quasi-linearizations.

\subsubsection{Resolving ambiguities}\label{stillfield}$ $

The difficulty is that the action $a \mapsto \phi(a)$ is not
multiplicative in general, i.e. $\phi(a b)$ need not be
$\phi(a)\phi(b).$

%
%

  We need to find a Noetherian module in whose endomorphisms $A $ can
be represented via the Cayley-Hamilton theorem. We need to
coordinate two differing actions. Towards this end we  introduce an
auxiliary ring.

\begin{defn}\label{trmat0}
In the matrix ring $\M[n](K)$, we define
\begin{equation}\label{trmat}
\alphajk(a) : = \sum_{j=1}^n \sum e_{j,i_1} a e_{i_2,i_2}a \cdots a
e_{i_{k}i_k} a e_{i_1,j},\end{equation} the inner sum taken over all
index vectors of length $k$.

The operator algebra generated by multiplication by elements
$\{\alphajk:   \  k \le n\}$ over~$F$ (cf.~Notation~\ref{not63}) is
denoted $\mathcal T$.
\end{defn}

Thus $\alphajk(a)$ gives us the matrix evaluation of a
characteristic coefficient, and $\mathcal T$ provides the
characteristic coefficients.

We have two versions of characteristic coefficients, one given in
Definition~\ref{trmat0} and the other in Remark~\ref{CHid}, but the
matrix version is not necessarily compatible with polynomial
evaluations.

Since we may be in nonzero characteristic, in the main situation our
quasi-linear hiked polynomials need not be homogeneous, and we also
turn to the pseudo-dominant components.

%

\begin{defn}\label{idntif}
Take generators $\psi_1, \dots, \psi_l$ of $\mathcal T$, 
and formally define relations
$$\bar\psi_j^k = \lambda_{j,0} + \lambda_{j,1}\bar\psi_j + \dots +
\lambda_{j,k-1}\bar\psi_j^{k-1}$$ for commuting indeterminates
$\lambda_{j,u}.$

Taking this big ring $A'$ of Notation~\ref{not63},
Let $M = A'/\mathcal J$ where $\mathcal J$ is the ideal
generated by the elements $(\lambda_{j,j'}-\gamma_{j,j'})d$ for $d$
isolated (see \Dref{iso}).
\end{defn}

\begin{lem} $\mathcal J \cap \mathcal I = 0.$ Hence $\mathcal I $
embeds naturally into $M$.
\end{lem}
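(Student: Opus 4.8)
The plan is to exploit the defining feature of $\mathcal J$: it is generated by the elements $(\lambda_{j,j'}-\gamma_{j,j'})d$ with $d$ \emph{isolated}. The point of passing to $M$ is that, on the isolated part of the module, the formal indeterminates $\lambda_{j,j'}$ are being pinned to the actual structure constants $\gamma_{j,j'}$ coming from the integrality relations $\bar\psi_j^k = \sum_{u<k}\gamma_{j,u}\bar\psi_j^u$ holding on that component. So morally $\mathcal J$ only affects elements that are annihilated (on both sides) by the radical, i.e.\ elements supported in the ``deep'' part $J^t$. First I would make this precise: by \Rref{unhik}, the product of two isolated elements is $0$, and more generally an isolated element times any radical element (either side) vanishes; hence $\mathcal J \subseteq A'\cdot(\text{isolated part})\cdot A'$, and in fact every element of $\mathcal J$ is a sum of terms each of which, read back in $\widehat{A_0}$ (setting $\lambda_{j,u}=\gamma_{j,u}$), is $0$ because isolated elements already satisfy those relations. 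The key technical claim is therefore: \emph{an element of $\mathcal I$ that becomes a combination of the $(\lambda_{j,j'}-\gamma_{j,j'})d$'s is already zero in $A'$.}

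The second ingredient is that $\mathcal I$ (the T-ideal generated by the hiked polynomial $f$) is, by the whole thrust of \S\ref{trab} and Proposition~\ref{modhike1}, bonded and uniform: its nonzero evaluations pass through semisimple substitutions of degree $\tilde n$ in a \emph{single} dominant (or pseudo-dominant) configuration, and an isolated radical element inserted into any auxiliary slot kills the evaluation. Concretely, any nonzero evaluation of $f$ exhibits a Capelli bond $h_{\tilde n}$ sitting on semisimple matrices of degree $\tilde n$, so it cannot be ``absorbed into'' a product involving an isolated element — multiplying such an evaluation by an isolated $d$ on either side gives $0$. Thus $\mathcal I \cap \big(A'\cdot(\text{isolated})\cdot A'\big) = 0$ on evaluations, hence as an ideal, since $\mathcal I$ is spanned by its evaluations.

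Combining the two: an element $w \in \mathcal J \cap \mathcal I$ lies in the ideal generated by $(\lambda_{j,j'}-\gamma_{j,j'})d$ with $d$ isolated, so $w$ belongs to $A'\cdot(\text{isolated part})\cdot A'$; but $w\in\mathcal I$, and by the previous paragraph $\mathcal I$ meets that submodule only in $0$. Hence $w=0$. The composite map $\mathcal I \to M = A'/\mathcal J$ is then injective, so $\mathcal I$ embeds naturally into $M$, as claimed. The main obstacle I expect is the bookkeeping in the first paragraph: verifying that \emph{every} generator of $\mathcal J$, and hence every element of $\mathcal J$, genuinely lies in the two-sided submodule generated by isolated elements — one must be careful that multiplying a generator $(\lambda_{j,j'}-\gamma_{j,j'})d$ by arbitrary elements of $A'$ (which contains the commuting indeterminates $\lambda$) does not produce something outside that submodule, but this follows because $d$ isolated forces $A'dA' = A'_{\mathrm{ss}}\,d\,A'_{\mathrm{ss}}$ modulo $0$ and the $\lambda$'s act as scalars.
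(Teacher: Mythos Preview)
Your argument has a genuine gap, and in fact the direction of the key step is reversed.

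You correctly observe that $\mathcal J$ is generated by terms $(\lambda_{j,j'}-\gamma_{j,j'})d$ with $d$ isolated, and you correctly note (in your first paragraph) that setting $\lambda_{j,u}=\gamma_{j,u}$ kills every such generator. But you then abandon that observation and instead try to argue that $\mathcal I \cap \big(A'\cdot(\text{isolated})\cdot A'\big)=0$. Your justification---that multiplying a nonzero evaluation of $f$ by an isolated $d$ gives zero---does not establish this; it shows that evaluations of $f$ \emph{annihilate} isolated elements, which is a completely different statement from being \emph{disjoint} from the isolated submodule. Worse, the conclusion is false: Proposition~\ref{closed1} was arranged precisely so that the nonzero substitutions of the hiked $f$ \emph{are themselves} isolated elements (after at most $t$ hikes one lands in $J^t$, and the extra occurrence is semisimple). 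So $\mathcal I$ lives \emph{inside} the isolated part, not transversally to it; your route cannot separate $\mathcal I$ from $\mathcal J$.

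The paper's argument is the mirror image of yours, and it is the one your first paragraph almost reached. One does not show that $\mathcal I$ avoids isolated elements; one shows that elements of $\mathcal J$ have all their \emph{isolated substitutions} equal to zero (the $(\lambda-\gamma)$ factor kills them, exactly as you said when ``reading back in $\widehat{A_0}$''), whereas by the hiking construction $f$---and hence a nonzero element of $\mathcal I$---has a nonzero isolated substitution. That is the entire proof. So the fix is to drop your second paragraph entirely and finish along the lines of your own remark: under the specialization $\lambda_{j,j'}\mapsto\gamma_{j,j'}$ every generator of $\mathcal J$ dies, while $\mathcal I$ (which does not involve the $\lambda$'s) is fixed and contains the nonzero isolated value of $f$.
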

\begin{proof}
If $f \in \mathcal J \cap \mathcal I,$ then its isolated
substitutions must be 0, but by hypothesis~$f$ has nonzero isolated
substitutions.
\end{proof}

 Let~$W$ be the annihilator of~$\mathcal{J}$ in $A'$.
Then $W\mathcal{J} = 0,$ implying:

\begin{lem}\label{Ahatisfinite1}
The action on $M$ is the same over $\mathcal T$ and $\mathcal C$.
\end{lem}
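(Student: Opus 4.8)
The statement asserts that, after passing to the quotient module $M = A'/\mathcal J$, the action of the matrix-theoretic operator algebra $\mathcal T$ (Definition~\ref{trmat0}) and the polynomial-defined algebra $\mathcal C$ (Notation~\ref{not63}) coincide on $M$. The plan is to compare the two actions element by element on the generic generators and their Shirshov products, and show that any discrepancy between them lies in the ideal $\mathcal J$ that we have just quotiented out. Concretely, for each generic generator (or each Peirce component of such, as in the set $\mathcal S$ of Notation~\ref{not63}), both $\psi_j \in \mathcal T$ and the corresponding characteristic coefficient $\alpha_j$ — now realized as one of the commuting indeterminates $\lambda_i$ used to build $\mathcal C' = \mathcal C[\lambda_i]$ — satisfy the same integral (characteristic-polynomial) relation of degree $\le n$ over the base, by Lemma~\ref{ZarS}, Proposition~\ref{obv2}, and the formal relations imposed in Definition~\ref{idntif}. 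The difference $\psi_j - \alpha_j$ therefore annihilates, upon multiplication, every matrix block in which the relevant semisimple substitution actually sits.

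First I would make precise the identification: in Definition~\ref{idntif} we introduced generators $\psi_1,\dots,\psi_l$ of $\mathcal T$ and imposed $\bar\psi_j^{\,k} = \lambda_{j,0} + \lambda_{j,1}\bar\psi_j + \cdots + \lambda_{j,k-1}\bar\psi_j^{\,k-1}$, while $\mathcal J$ is generated by $(\lambda_{j,j'}-\gamma_{j,j'})d$ for $d$ isolated, where the $\gamma_{j,j'}$ are the actual characteristic coefficients coming from the polynomial side (Remark~\ref{CHid}, Definition~\ref{HCind}). So modulo $\mathcal J$, the formal coefficients $\lambda_{j,j'}$ become the genuine polynomial-defined coefficients $\gamma_{j,j'}$ on the part of $M$ not killed by isolated elements. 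The point of Remark~\ref{CHrad} and the $\bar q$-power trick is that, after the hiking we have performed, the substitutions feeding these operators are genuinely semisimple matrices, so their matrix characteristic coefficients (computed by $\alphajk$ of \eqref{trmat}) agree with the polynomial-extracted ones of Lemma~\ref{q1}, \eqref{traceab0}. Thus on each single monomial of the hiked $f$ — where Proposition~\ref{modhike1} already gives us a well-defined, commuting, integral action — the two actions literally coincide.

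The core computation is then: for a general element of $M$, write it along the Peirce/Wedderburn decomposition; on each bonded diagonal block of degree $\tilde n$, $\phi^\ell$ and $\phi^r$ of \S\ref{contact1} are multiplication by matrix characteristic coefficients $\aqpol_u(w)^m$, which by \eqref{traceab0} (equivalently Lemma~\ref{q1}) equal the polynomial-absorbing action of $f$; on the isolated radical part, $\mathcal J$ has been built precisely so that the two actions are forced to agree there too (any would-be discrepancy $(\lambda_{j,j'}-\gamma_{j,j'})d$ with $d$ isolated is zero in $M$). Since $W\mathcal J = 0$ for $W = \Ann_{A'}(\mathcal J)$, the remaining part of $A'$ on which the actions might a priori differ is annihilated, so in $\End(M)$ the two operator algebras restrict to the same maps. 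I would organize this as: (1) reduce to checking on generators of $\mathcal S$; (2) on semisimple blocks use \eqref{traceab0} to identify matrix and polynomial coefficients; (3) on isolated/radical pieces invoke the defining relations of $\mathcal J$; (4) patch via the Wedderburn decomposition and $W\mathcal J = 0$.

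The main obstacle is step (3)–(4): the pseudo-dominant components with the same matrix degree $\tilde n$ but different multiplicities in different monomials (the difficulty flagged right before Definition~\ref{trmat0}) mean that the polynomial-side action of Proposition~\ref{modhike1} can be monomial-dependent, whereas $\mathcal T$ is a fixed operator algebra. Reconciling these requires that the relations in Definition~\ref{idntif} be strong enough to collapse all the monomial-dependent actions to the single matrix action modulo $\mathcal J$, while still being weak enough that $\mathcal J \cap \mathcal I = 0$ (the preceding lemma). The delicate part of the argument is verifying that the integrality degrees match up across all pseudo-dominant components simultaneously — i.e., that one common set of commuting indeterminates $\lambda_{j,u}$ with one common set of relations suffices — which is exactly why the symmetrized characteristic coefficients of Definition~\ref{sym} and Lemma~\ref{symcoef} were introduced, and I would lean on Lemma~\ref{symcoef} to reduce every branch-specific coefficient to integrality over the single symmetrized family, after which the identification of actions on $M$ becomes bookkeeping.
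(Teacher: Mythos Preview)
Your approach is correct and is essentially the paper's one-line argument expanded: the paper simply observes that the only possible discrepancy between the two actions lies on isolated elements, where it takes the form $(\lambda_{j,j'}-\gamma_{j,j'})d$, which is in $\mathcal J$ by construction and hence vanishes in $M$. Your step~(3) is the whole proof; the scaffolding in steps (1), (2), (4) and the ``main obstacle'' paragraph are either implicit justification for the claim that the discrepancy is isolated (which the paper asserts without argument) or concerns that were the \emph{motivation} for introducing $\mathcal J$ rather than residual difficulties once $\mathcal J$ is defined.
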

\begin{proof} The only possible discrepancy is isolated, so the difference in the action comes from the $\lambda_{j,j'} - \gamma_{j,j'}$, which are in $\mathcal{J}$ by definition.
\end{proof}

\begin{lem}\label{Ahatisfinite}
The algebra $M$ is a finite module over~$\,{\mathcal C}$, and in
particular is Noetherian and representable.
\end{lem}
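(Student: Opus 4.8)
The plan is to argue that $M = A'/\mathcal J$ is generated, as a ${\mathcal C}$-module, by a finite Shirshov-type spanning set of monomials, where each generator has bounded length and the "coefficients" one needs to absorb are exactly the characteristic coefficients that were adjoined in Notation~\ref{not63}. First I would recall that $\widetilde{A_0}$ is affine and PI, so by Shirshov's Theorem (\cite[Chapter~2]{BR}) it is spanned over $F$ by the set $\mathcal S$ of products of generators of length at most the Shirshov bound, together with powers of a finite set of "long" monomials $w_1,\dots,w_r$; modulo the radical, each $w_i$ acts as an integral element over its characteristic coefficients, and these are precisely the elements we adjoined to form ${\mathcal C}$. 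Thus $\widehat{A_0} = \widetilde{A_0}\otimes_F {\mathcal C}$ is already a finite ${\mathcal C}$-module: each $w_i^N$ for $N$ large reduces, via the Cayley--Hamilton identity, to a ${\mathcal C}$-combination of lower powers $w_i^j$, $j < n_i$, so the finitely many products of the $w_i^j$ with elements of $\mathcal S$ span $\widehat{A_0}$ over ${\mathcal C}$.

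Next I would pass from $\widehat{A_0}$ to $A' = \widehat{A_0}[\lambda_i : i\in I]$ and then to $M = A'/\mathcal J$. A priori $A'$ is only a finite module over ${\mathcal C}' = {\mathcal C}[\lambda_i]$, not over ${\mathcal C}$, because of the adjoined commuting indeterminates $\lambda_i$. The point of modding out by $\mathcal J$ (Definition~\ref{idntif}) is exactly to kill the difference between the $\lambda_{j,j'}$ and the genuine coefficients $\gamma_{j,j'}$ on every isolated element; combined with Lemma~\ref{Ahatisfinite1}, which says the action on $M$ agrees over $\mathcal T$ and ${\mathcal C}$, every occurrence of a $\lambda$ in a monomial of $M$ can be rewritten, using the relations $\bar\psi_j^k = \sum_{u<k}\lambda_{j,u}\bar\psi_j^u$ and the identifications forced by $\mathcal J$, as an honest characteristic coefficient lying in ${\mathcal C}$. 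Hence the image in $M$ of the finite ${\mathcal C}'$-spanning set of $A'$ is in fact a finite ${\mathcal C}$-spanning set of $M$: the $\lambda$'s contribute nothing new once one works modulo $\mathcal J$.

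Finiteness over ${\mathcal C}$ immediately gives the rest. Since ${\mathcal C}$ is a finitely generated commutative $F$-algebra, it is Noetherian by the Hilbert Basis Theorem, so a finite ${\mathcal C}$-module is Noetherian; and $M$, being a finite module over the commutative Noetherian ring ${\mathcal C}$, embeds into some $\M[N]({\mathcal C})$ via its action on a finite generating set, hence is representable in the sense of the introduction. The step I expect to be the main obstacle is the second paragraph: carefully checking that modding out by $\mathcal J$ really does collapse the $\lambda$-indeterminates down to elements of ${\mathcal C}$ on all of $M$ — one has to be sure that the "isolated" hypothesis built into $\mathcal J$ reaches every monomial where a $\lambda$ could appear with an action differing from the true characteristic coefficient, which is where the earlier hiking (so that $f$ is uniform and has the requisite nonzero isolated substitutions, Propositions~\ref{closed1} and~\ref{modhike1}) is used. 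Once that identification is in place, the Shirshov finiteness argument and the Hilbert Basis Theorem finish the proof routinely.
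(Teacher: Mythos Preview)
Your overall skeleton --- $M$ finite over $\mathcal{C}'$ by Shirshov, then descend to finiteness over $\mathcal{C}$, then conclude Noetherian and representable --- matches the paper's proof. The two substantive steps are handled differently, however.

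For the descent from $\mathcal{C}'$ to $\mathcal{C}$, the paper does not try to collapse the $\lambda$-indeterminates through the relations defining $\mathcal J$ and Lemma~\ref{Ahatisfinite1}. Instead it invokes Lemma~\ref{ZarS} directly: the $\lambda_i$ stand for characteristic coefficients of integral elements, and such coefficients are themselves integral, so (the image of) $\mathcal{C}'$ is finite over $\mathcal{C}$. This integrality route bypasses entirely the obstacle you correctly flag --- the relations $(\lambda_{j,j'}-\gamma_{j,j'})d=0$ in $\mathcal J$ are only imposed for isolated~$d$, and you have not shown (nor is it clear) that this forces $\lambda_{j,j'}$ to coincide with $\gamma_{j,j'}$ on all of $M$. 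So your route has a genuine gap at exactly the point you identify as the main obstacle, whereas the paper's integrality argument never needs that identification to hold globally.

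For representability, the paper cites Anan'in's Theorem~\cite{An}. Your regular-representation embedding $M\hookrightarrow \M[N](\mathcal{C})$ only yields weak representability over the commutative Noetherian ring~$\mathcal{C}$, which is not a field; to get representability in the sense required here you would still need to invoke Anan'in or the Rowen--Small result from the introduction.
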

\begin{proof}%
Indeed, $M$ is
a finite module over ${\mathcal C}'$ in view of Shirshov's
Theorem. But ${\mathcal C}'$ is finite over ${\mathcal C},$ in view
of Lemma~\ref{ZarS}, implying $M$ is finite over~${\mathcal C}$.
 Thus $M$ is Noetherian, and is representable by Anan'in's Theorem \cite{An}.
\end{proof}

\begin{lem}\label{Ahatisfinite17}
For any polynomial~$f$, each of its pseudo-dominant branches
provide finite (Noetherian) $\mathcal C$-submodules of $\mathcal T$.
Consequently, $\mathcal T$ is integral and finite over $\mathcal C$.
\end{lem}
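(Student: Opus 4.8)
The plan is to leverage Lemma~\ref{Ahatisfinite}, which already gives that the module $M = A'/\mathcal J$ is a finite (hence Noetherian) module over $\mathcal C$. First I would observe that each pseudo-dominant branch $\mathcal B'$ of $f$ carries, via the left- and right-action operators $\phi^\ell,\phi^r$ of \S\ref{contact1} (built from the ${\aqpol}_u(w)$ and the matrix version $\alphajk$ of Definition~\ref{trmat0}), a well-defined sub-action of $\mathcal T$ on the span of the substitutions belonging to that branch; by Lemma~\ref{Ahatisfinite1} this action agrees, modulo $\mathcal J$, with the action of $\mathcal C$ on the corresponding submodule of $M$. The submodule of $M$ coming from a fixed pseudo-dominant branch is a $\mathcal C$-submodule of the Noetherian module $M$, hence is itself a finite (Noetherian) $\mathcal C$-module; this is exactly the first assertion once one identifies that submodule with the image of the relevant piece of $\mathcal T$ under the identification of Definition~\ref{idntif}.

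For the consequence — that $\mathcal T$ is integral and finite over $\mathcal C$ — I would argue as follows. The generators $\psi_1,\dots,\psi_l$ of $\mathcal T$ act on $M$, and by Definition~\ref{idntif} each $\psi_j$ satisfies (on $M$, after modding out by the isolated discrepancies that lie in $\mathcal J$) a monic relation $\bar\psi_j^{k} = \lambda_{j,0}+\lambda_{j,1}\bar\psi_j+\dots+\lambda_{j,k-1}\bar\psi_j^{k-1}$ with the $\lambda_{j,u}$ specialized to the genuine characteristic coefficients $\gamma_{j,u}\in\mathcal C$ (this specialization being forced precisely on the non-isolated part, which is where $\mathcal I$ — and hence the faithful part of the action — lives, by the Lemma preceding Lemma~\ref{Ahatisfinite1}). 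Thus every generator of $\mathcal T$ is integral over $\mathcal C$ of bounded degree, and since $\mathcal T$ is generated over $F\subseteq \mathcal C$ by finitely many such $\psi_j$, it is finite over $\mathcal C$ by the standard tower argument for integral extensions; finiteness then gives integrality of all of $\mathcal T$ over $\mathcal C$. Combining this with the branch-by-branch statement, each pseudo-dominant branch's contribution sits inside the finite $\mathcal C$-module $\mathcal T$, confirming the Noetherian conclusion.

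The main obstacle I expect is bookkeeping the discrepancy between the two incompatible notions of characteristic coefficient — the polynomial-evaluation version of Remark~\ref{CHid} and the matrix version $\alphajk$ of Definition~\ref{trmat0} — across different pseudo-dominant branches with the same matrix degree $\tilde n$ but different multiplicities. The resolution is that this discrepancy is always \emph{isolated} in the sense of Definition~\ref{iso1} (a product of two isolated elements is $0$ by Remark~\ref{unhik}), so it is killed in $\mathcal J$, and Lemma~\ref{Ahatisfinite1} then guarantees the two actions coincide on $M$; hence the integrality relations transported through $\mathcal T$'s action on $M$ are genuine relations over $\mathcal C$ and not merely over the larger polynomial ring $\mathcal C'$. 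Once that point is pinned down, the finiteness of $M$ over $\mathcal C$ from Lemma~\ref{Ahatisfinite} does the rest.
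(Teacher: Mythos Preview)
Your proposal is correct and follows essentially the same route as the paper: the first assertion is reduced to Lemma~\ref{Ahatisfinite} (finiteness of $M$ over $\mathcal C$, with the pseudo-dominant branches sitting inside as $\mathcal C$-submodules), and the second is obtained from the integrality of the finitely many generators $\psi_j$ of $\mathcal T$ over $\mathcal C$. The paper's own proof is just these two sentences stated tersely; your write-up simply unpacks the bookkeeping (in particular the identification via Lemma~\ref{Ahatisfinite1} and Definition~\ref{idntif}) that the paper leaves implicit.
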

\begin{proof}
The first assertion is by Lemma~\ref{Ahatisfinite}. The second assertion follows since the
elements of $\mathcal T$ are integral, and we only need finitely
many to generate $\mathcal T$.
\end{proof}
Also recall that $\mathcal C$ is finite over $F$.

\subsection{Conclusion of the proof of Theorem~\ref{4.66}} \label{CIO}$ $

As mentioned earlier, we assume that $\cha(F) >0,$ since the result
is known in characteristic 0.

%
We have reduced to the case that~$f$ is $A$-quasi-linear and
suitably hiked,  picking one homogeneous component and zeroing out
the other ones.

Now $\CI $ contains a nonzero
 T-ideal $\CI_1$ of $\widehat {A_0}$ generated by $\bar{q}$-characteristic
coefficient-absorbing polynomials of $\CI$ in $ {\mathcal C}'
\widetilde {A_0}.$ The ideal $\CI_1$ is representable by
Lemma~\ref{Ahatisfinite}, implying $\langle f \rangle_T \cap \CI_1$ is
representable, a contradiction by \Lref{Phoenix}.

%
This concludes the proof of Theorem~\ref{4.66}.
%


\section{Proof of Theorem~\ref{4.67}, over an arbitrary Noetherian ring
~$C$}\label{467}

We introduce new notation for the remainder of the paper. Let $A$ be
a given relatively free affine PI-algebra over an arbitrary
commutative Noetherian ring $C$.
 From now on let $J$ denote the nilpotent radical of
the Noetherian ring $C$. We take $t$ maximal such that $J^t \ne 0,$
i.e., $J^{t+1} = 0.$ Write $J$ as a finite intersection $P_1 \cap
\dots \cap P_j$ of prime ideals, with $j$ minimal possible. We call
$j$ the \textbf{irredundancy index} of $J$. The proof is based on a
triple induction in the following order: Specht induction on $A$,
Noetherian induction on $C$, and usual induction on the irredundancy
index.

\subsection{Various aspects of torsion}\label{tors}$ $

The difference for algebras over a Noetherian ring $C$ from the
field-theoretic case is that modules over  $C$ may have torsion. 

Define $F := C/J$. If $C$ is local then $F$ is a field, and we shall
see how to reduce to $F$-algebras. Thus, reduction to $C$ local is
a crucial part of the proof.

\subsubsection{Reduction to all torsion of $C$ contained in $J$}$ $

 We define $\q $ to be the
set of elements of $A$ which have annihilators in $C\setminus J.$ We
claim that we can reduce to the case that $\q =0.$ Assume otherwise
that $\q \ne 0.$

Given $0 \ne a \in \q ,$ define $S_a = \{ c \in C\setminus J : c ^k
a = 0 \text{ for some }k\}$. We want to reduce to the case that $S_a
= \emptyset.$ For $c\in C$, define $\mathcal I_{k,c}= \{ a : c^k a =
0\}$.

\begin{lem}\label{existsn} Given $c\in C$,  there is some $n$ such that if $c \in S_a$  then $c^n a =
0$.\end{lem}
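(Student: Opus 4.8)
The statement asserts a uniform bound $n$, depending on $c$ but not on $a$, such that membership $c\in S_a$ already forces $c^n a=0$. The plan is to replace the na\"\i ve ``for some $k$'' in the definition of $S_a$ by a single exponent coming from the Noetherian hypothesis on $C$ together with the fact that $A$ is affine over $C$. First I would observe that $\mathcal I_{k,c}=\{a\in A: c^k a=0\}$ is an ideal of $A$ (indeed a $C$-submodule, and it is a two-sided ideal since $c$ is central in $C$), and that these form an ascending chain
$$\mathcal I_{1,c}\subseteq \mathcal I_{2,c}\subseteq \mathcal I_{3,c}\subseteq\cdots$$
as $k$ grows, because $c^k a=0$ implies $c^{k+1}a=0$. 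If this chain stabilized in $A$ we would be done immediately with $n$ the stabilization index, since $c\in S_a$ means precisely $a\in\bigcup_k\mathcal I_{k,c}$.

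The catch is that $A$ itself need not be a Noetherian $C$-module — it is only affine, i.e.\ finitely generated as a $C$-algebra — so the ascending chain of ideals $\mathcal I_{k,c}$ of $A$ need not stabilize a priori. So the main obstacle is precisely transferring Noetherianity from $C$ to the relevant submodules of $A$. The way around it: fix finitely many algebra generators $x_1,\dots,x_\ell$ of $A$ over $C$, and for each word $w$ in the generators consider the annihilator ideal $\Ann_C(w)=\{c'\in C: c'w=0\}$; since $C$ is Noetherian and the radical $J$ is nilpotent, for each individual element $w$ there is a bound $n_w$ with $c^{n_w}w=0$ whenever $c^k w=0$ for some $k$ (take $n_w$ so that $c^{n_w}C\cap\Ann_C(w)$ is already stable, using the ACC on the ideals $\Ann_C(w):c^k=\{c'\in C: c'c^k\in\Ann_C(w)\}$ of $C$). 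The point is then that there are only finitely many generators to worry about once we set up the induction correctly: any $a$ with $c^k a=0$ can be analyzed through its expression in terms of the $x_i$, and one uses the Noetherian induction on $C$ (as described in \S\ref{467}: Noetherian induction on $C$, then induction on the irredundancy index) to reduce to finitely many cases. Concretely, pass to $C/c^mC$ for suitable $m$ or localize away from $c$; killing the $c$-torsion is governed by the behavior of $c$ on the finitely generated $C$-algebra $A$, which by a standard Artin–Rees / Krull-intersection style argument over the Noetherian ring $C$ produces the desired uniform $n$.

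The cleanest route I would actually write: note $S_a\subseteq C\setminus J$ and work with one $c$ at a time. Set $B_k=\Ann_A(c^k)=\mathcal I_{k,c}$. By Noetherian induction on $C$ (the inductive hypothesis of \S\ref{467} applies to $A/B_k$ or to $A\otimes_C C/c^kC$, which is relatively free over a ``smaller'' Noetherian ring), one gets that the chain $(B_k)$ stabilizes; let $n$ be its stabilization point. Then $c\in S_a$ gives $a\in B_k=B_n$ for some $k$, hence $c^n a=0$, which is the claim. The only genuine work is checking that the auxiliary algebras $A/B_k$ (or the relevant quotients/localizations of $C$) are again relatively free affine algebras over commutative Noetherian rings to which the ongoing induction applies — routine but requiring care about which induction parameter strictly decreases — and verifying that finitely many $k$'s suffice, which is exactly where the Noetherian hypothesis on $C$ enters. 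I expect that bookkeeping of the induction parameters, rather than any deep new idea, will be the main difficulty.
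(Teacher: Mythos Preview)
You have correctly set up the ascending chain $\mathcal I_{1,c}\subseteq\mathcal I_{2,c}\subseteq\cdots$ and correctly identified that stabilization of this chain is exactly what is needed. But you then take a long detour to force stabilization, and the detour does not work as written. The inductive hypothesis of \S\ref{467} is that \emph{smaller} relatively free algebras are weakly representable; it says nothing about ACC on ideals of $A$, so the sentence ``By Noetherian induction on $C$ \dots\ one gets that the chain $(B_k)$ stabilizes'' is a non sequitur. Your earlier Artin--Rees style argument via $\Ann_C(w)$ also does not produce a bound uniform in $a$: there are infinitely many words $w$, and nothing you have written controls the $n_w$ simultaneously.

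The missing observation is that each $\mathcal I_{k,c}$ is not merely an ideal but a \emph{T-ideal} of the relatively free algebra $A$: for any $C$-algebra endomorphism $\varphi$ of $A$ and any $a\in\mathcal I_{k,c}$ one has $c^k\varphi(a)=\varphi(c^k a)=0$, so $\varphi(a)\in\mathcal I_{k,c}$. The paper then invokes Theorem~\ref{Spaff} (the affine Specht theorem over Noetherian rings), which gives ACC on T-ideals of $A$ directly, and the chain stabilizes at some $\mathcal I_{n,c}$. That is the entire proof. Your worry that ``$A$ itself need not be a Noetherian $C$-module'' is well-founded for ordinary ideals, but is bypassed completely once you notice the T-ideal structure and use the Specht ACC already established earlier in the paper.
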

\begin{proof} $\mathcal I_{k,c}$ is clearly an ideal of
$A$, and is a T-ideal, since for any endomorphism $\varphi$ of $A$,
$ c^k \varphi( a) = \varphi(c^k a) = \varphi(0) = 0 .$ Hence, we
have an ascending chain of T-ideals $\mathcal I_{1,c} \subseteq
\mathcal I_{2,c} \subseteq \dots,$ which must stabilize at some
$\mathcal I_{n,c}$. This means that any element annihilating a power
of $c$ must annihilate $c^n$.
\end{proof}

\begin{lem}\label{existsn2} For any $0 \ne a \in \q $, if there is a counterexample $A$ to
Theorem \ref{4.66}, then there is a counterexample $\bar A$ which is
a homomorphic image of $A$ with $S_{\bar a} = \emptyset,$ where
$\bar a$ is the image of $a$ in $\bar A$.
\end{lem}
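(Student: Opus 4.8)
The plan is to use Lemma~\ref{existsn} to pin down the relevant power of $c$, and then mod out by the ideal $\mathcal I_{n,c}$, checking that Specht minimality is inherited and that the quotient still fails representability.

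First I would fix $0 \ne a \in \q$. By definition of $\q$, the element $a$ has an annihilator in $C \setminus J$, so if $S_a = \emptyset$ there is nothing to prove and we may take $\bar A = A$. Otherwise pick $c \in S_a$, so $c \in C \setminus J$ and $c^k a = 0$ for some $k$. Applying \Lref{existsn} gives an integer $n$ (depending only on $c$) with $c^n a = 0$; replacing $a$ by itself, we record that $a \in \mathcal I_{n,c}$, the T-ideal $\{ b \in A : c^n b = 0\}$, which is nonzero. Now set $\bar A := A/\mathcal I_{n,c}$, a relatively free affine $C$-algebra (since $\mathcal I_{n,c}$ is a T-ideal) which is a proper homomorphic image of $A$. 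Let $\bar a$ denote the image of $a$ in $\bar A$; since $a \in \mathcal I_{n,c}$ we in fact have $\bar a = 0$, but more to the point we must show that iterating this construction over all $c \in S_{\bar a}$ terminates with $S_{\bar a} = \emptyset$.

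The iteration argument runs as follows. Whenever $\bar A$ still has some $0 \ne \bar a \in \q$ with a surviving element $c' \in S_{\bar a}$, we pass to $\bar A / \mathcal I_{n',c'}$, strictly enlarging the kernel of the map from $A$. Because $A$ is relatively free and (by Specht induction, \Tref{Spaff}) satisfies the ACC on T-ideals, this strictly ascending chain of kernels must stabilize, so after finitely many steps we reach $\bar A$ in which $S_{\bar a} = \emptyset$ for every $0 \ne \bar a \in \q$; specializing to the image of the original $a$ gives the asserted $\bar a$ with $S_{\bar a} = \emptyset$. Throughout, each quotient $\bar A$ is again a counterexample to Theorem~\ref{4.66}: a homomorphic image of a non-representable relatively free algebra need not a priori be non-representable, so here is where we must invoke the Specht-minimality setup --- since $A$ was chosen as a Specht minimal counterexample (Remark~\ref{Noethind0}), any proper homomorphic image $A/\mathcal I$ is representable unless $\mathcal I = 0$; hence if some $\bar A$ in our chain were representable, then composing with \Lref{Phoenix} (applied with the polynomial witnessing $c^n \cdot (-) = 0$, which annihilates the ideal $c^n C$ of $C$, against the T-ideal $\mathcal I_{n,c}$ satisfying $\mathcal I_{n,c} \cap c^n A = 0$) would contradict $A$ being a Specht minimal counterexample.

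The main obstacle I anticipate is precisely this last point: verifying that the quotient $\bar A$ is genuinely still a counterexample, i.e. controlling the interaction between ``homomorphic image of a counterexample'' and ``Specht minimal''. The clean way around it is to argue contrapositively via \Lref{Phoenix}: rather than proving $\bar A$ non-representable directly, one shows that if $\bar A = A/\mathcal I_{n,c}$ were representable, then because the polynomial $c^n x_1$ (viewing $c$ as acting on $A$) annihilates the ideal $c^n C$ of $C$ and because $\mathcal I_{n,c} \cap c^n A = 0$ by construction of $n$, Lemma~\ref{Phoenix} would force $A$ itself to be representable --- contradiction. Thus each $\bar A$ in the chain inherits the counterexample property, the ACC terminates the process, and the image of $a$ in the final $\bar A$ has empty $S_{\bar a}$, as required.
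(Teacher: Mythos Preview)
Your argument has a structural problem: you try to produce the counterexample $\bar A$ as a \emph{proper} quotient $A/\mathcal I_{n,c}$ and then iterate, but this cannot work. You yourself observe that $a\in\mathcal I_{n,c}$, so its image $\bar a$ is $0$, and the conclusion ``$S_{\bar a}=\emptyset$'' no longer refers to anything (the sets $S_a$ are only defined for $0\ne a$). Switching mid-proof to ``some other $0\ne\bar a\in\q$'' abandons the element the lemma is about. More seriously, your claim that each $\bar A$ ``inherits the counterexample property'' is false: since $A$ is Specht minimal and $\mathcal I_{n,c}\ne 0$, the quotient $A/\mathcal I_{n,c}$ is \emph{representable}, not a counterexample. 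Your contrapositive (``if $\bar A$ were representable, Phoenix yields a contradiction, hence $\bar A$ is not representable'') is therefore vacuously triggered---you do get a contradiction, but the correct conclusion from it is that the assumption $S_a\ne\emptyset$ was wrong, not that $\bar A$ fails representability.

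That is exactly how the paper argues, and it never passes to a quotient at all: take $A$ Specht minimal, suppose $c\in S_a$, and use the injection
\[
A\ \hookrightarrow\ (A/\mathcal I_{n,c})\ \oplus\ \bigl(A\otimes_C (C/c^nC)\bigr),
\]
which holds because $\mathcal I_{n,c}\cap c^nA=0$. The first summand is representable by Specht induction (as you note), and the second by \emph{Noetherian induction on $C$}, since $c\notin J$ forces $c^n\ne 0$ so $C/c^nC$ is a proper quotient. Hence $A$ is representable, contradicting the choice of $A$; thus $S_a=\emptyset$ already in $A$, and one takes $\bar A=A$. Note that invoking \Lref{Phoenix} verbatim here is not quite right: that lemma is stated over a field and disposes of $A/fA$ by Specht induction, whereas the summand $A/c^nA\cong A\otimes_C(C/c^nC)$ requires the Noetherian induction on the base ring, which your write-up never supplies.
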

\begin{proof} Take $A$ a Specht
minimal counterexample. Pick $c \in S_a$, and $\mathcal I_{n,c}$ as
in Lemma~\ref{existsn}. $\mathcal I_{n,c}\ne 0,$ in view of
Lemma~\ref{existsn}. We have an injection $A \hookrightarrow A/
\mathcal I_{n,c} \oplus (A \otimes _C (C/c^n C))$. But $A/\mathcal
I_{n,c}$ is representable, by Specht induction, and $A \otimes _C
(C/c^n C)$ is representable by Noetherian induction on $C$. Hence
$A$ is representable, contrary to assumption. Since this holds for
every $c \in S_a$, we conclude that $S_a = \emptyset.$
\end{proof}

\begin{prop}\label{existsn3} If there is a counterexample $A$ to
Theorem \ref{4.66}, then there is a counterexample $\bar A$ for
which all elements of $C$ making elements of $A$ torsion lie in $J$.
\end{prop}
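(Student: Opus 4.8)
The plan is to iterate \Lref{existsn2} and let the ACC on T-ideals force the process to terminate. Suppose $A$ is a counterexample. Since $A$ is relatively free and affine over the Noetherian ring $C$, it satisfies the ACC on T-ideals by \Tref{Spaff}, so any strictly ascending chain of T-ideals of $A$ is finite; equivalently, in any chain of proper homomorphic images $A\twoheadrightarrow A_1\twoheadrightarrow A_2\twoheadrightarrow\cdots$ (through T-ideals) only finitely many stages can occur. This will be the mechanism that halts the reduction.

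I would build such a chain as follows. Set $A_0=A$. Given a counterexample $A_i$, consider the set $\q$ attached to $A_i$ (the elements annihilated by some element of $C\setminus J$). If this $\q$ is $0$ we stop; otherwise pick $0\ne a\in\q$ and apply \Lref{existsn2} to get a counterexample $A_{i+1}$, a homomorphic image of $A_i$, in which the image $\bar a$ of $a$ has $S_{\bar a}=\emptyset$. The key observation is that the map $A_i\to A_{i+1}$ is then not injective: since $a\in\q$ for $A_i$, there is $c\in C\setminus J$ with $ca=0$ in $A_i$, hence $c\bar a=0$ in $A_{i+1}$; were $\bar a\ne 0$ we would get $c\in S_{\bar a}$, contradicting $S_{\bar a}=\emptyset$, so $\bar a=0$ although $a\ne0$. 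Note also that each $A_{i+1}$, being a homomorphic image of $A$ by a T-ideal, is again a relatively free affine PI-algebra over $C$, so \Lref{existsn2} is available at every stage. Thus the kernels of $A\to A_i$ form a strictly ascending chain of T-ideals of $A$, which by \Tref{Spaff} is finite. Consequently the reduction halts at some $\bar A:=A_N$ with $\q=0$ for $\bar A$, and this is exactly the assertion that every $c\in C$ with $ca'=0$ for some nonzero $a'\in\bar A$ lies in $J$.

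The argument is essentially bookkeeping once \Lref{existsn2} is in hand — the genuine content sits inside that lemma, where Specht induction ($A/\mathcal I_{n,c}$ representable) and Noetherian induction on $C$ ($A\otimes_C C/c^nC$ representable) are used. The only point in the present proof that requires care is the strict growth of the kernels: one must check that the chosen torsion element $a$ really still belongs to $\q$ for $A_i$ (so that some $c\in C\setminus J$ annihilates it), because it is precisely this that makes $S_{\bar a}\ne\emptyset$ unless $a$ dies in $A_{i+1}$, and hence forces $A_{i+1}$ to be a proper image of $A_i$. Without strictness the chain need not terminate, and the reduction would fail.
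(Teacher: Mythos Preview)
Your argument is correct, but the paper's proof is considerably shorter and proceeds differently. The paper simply takes $A$ to be a Specht minimal counterexample from the outset and observes that then $\q=0$: if some $0\ne a\in\q$ existed, then $S_a\ne\emptyset$, and the argument inside the proof of \Lref{existsn2} (applied to this Specht minimal $A$) would exhibit $A$ as representable, a contradiction. No iteration is needed---one appeal to Specht minimality settles everything.

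You instead treat \Lref{existsn2} as a black box and iterate it, killing one torsion element per step and invoking the ACC on T-ideals (\Tref{Spaff}) to halt. This is valid and has the virtue of using the lemma only through its statement; the price is the extra bookkeeping (checking that each quotient is by a T-ideal, that the chain is strictly increasing, and that each $A_i$ remains relatively free so the lemma applies again). In effect your chain reconstructs a Specht minimal quotient by hand: since the $\bar A$ produced in the proof of \Lref{existsn2} is already Specht minimal, your iteration actually stabilizes after a single step, though your argument does not need to know this. The paper's route is quicker because it reaches into the mechanism of the lemma's proof rather than iterating its conclusion.
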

\begin{proof} Take $A$ a Specht
minimal counterexample. We claim that $\q  = 0.$ Indeed otherwise we
can take $a \ne 0$ in $A$ and contradict Lemma~\ref{existsn2}.
\end{proof}

\subsubsection{Reduction to irredundancy index of $J$ equaling 1}$ $

\begin{cor}\label{existsn47} If there is a counterexample $A$ to
Theorem \ref{4.66}, satisfying the conclusion of
Proposition~\ref{existsn3}, then there is a counterexample in which
the irredundancy index of $J$ is 1.
\end{cor}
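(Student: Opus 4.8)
\textbf{Proof proposal for Corollary~\ref{existsn47}.} The plan is to run the announced ``usual induction on the irredundancy index'' of $J$, using the same embed-into-a-direct-sum philosophy as in Lemma~\ref{existsn2} and Lemma~\ref{Phoenix}. Start with a Specht minimal counterexample $A$ satisfying the conclusion of Proposition~\ref{existsn3}, so that every element of $C$ annihilating a nonzero element of $A$ already lies in $J$. Write $J = P_1 \cap \dots \cap P_j$ with $j$ minimal, and suppose $j \ge 2$; the goal is to produce a counterexample with strictly smaller irredundancy index, which by descent yields one with irredundancy index $1$.

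The key step is to split along the decomposition $J = P_1 \cap (P_2 \cap \dots \cap P_j)$. Set $P := P_1$ and $P' := P_2 \cap \dots \cap P_j$. By irredundancy of the $P_i$ we have $P \not\supseteq P'$ and $P' \not\supseteq P$, and $P + P' \supseteq J$ is not contained in any single minimal prime, while $P \cdot P' \subseteq P \cap P' = J$. Now consider the natural map $A \to (A \otimes_C C/P) \oplus (A \otimes_C C/P')$. Its kernel consists of elements annihilated by both an element of... more precisely, the kernel is $(PA) \cap (P'A)$-related; choosing $c \in P$ and $c' \in P'$ with $c,c' \notin J$ (possible precisely because $P,P'$ are not minimal over the other), the product $cc' \in J$, so $cc'$ is nilpotent on $A$, hence after passing to a further homomorphic image (as in Lemma~\ref{existsn2}, using $\mathcal I_{n,cc'}$) one may assume the map $A \hookrightarrow (A/PA) \oplus (A/P'A)$ is injective. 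Here $C/P$ has nilpotent radical $J/P$ which is an intersection of $1$ prime (namely $P/P$, i.e.\ $C/P$ is a domain, so in fact $J/P = 0$), and $C/P'$ has nilpotent radical $J/P' = (P \cap P')/P'$ which is an intersection of fewer than $j$ minimal primes; moreover both $C/P$ and $C/P'$ are Noetherian. By Noetherian induction on $C$ (these are proper quotients of $C$), both $A/PA$ and $A/P'A$ are weakly representable, hence representable over the appropriate base, so $A$ is representable — contradicting that $A$ is a counterexample.

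The main obstacle is the kernel/torsion bookkeeping: the naive map $A \to (A/PA)\oplus(A/P'A)$ need not be injective, since $PA \cap P'A$ can be nonzero even when $P \cap P' = J$ and $JA$ is small. This is exactly why Proposition~\ref{existsn3} was arranged first: with all torsion of $C$ on $A$ confined to $J$, an element of $PA \cap P'A$ is killed by the nilpotent ideal generated by products $cc'$ ($c \in P$, $c' \in P'$ chosen outside $J$), and one disposes of it by the same chain-stabilization trick as in Lemma~\ref{existsn} (the ideals $\mathcal I_{k, cc'}$ stabilize, and $A/\mathcal I_{n,cc'}$ is representable by Specht induction while $A \otimes_C C/(cc')^n C$ is representable by Noetherian induction on $C$). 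One must also check that the counterexample property is inherited by the homomorphic image, but this is automatic: a homomorphic image of a relatively free algebra is relatively free, and if every proper homomorphic image were representable then $A$ would embed into a direct sum of representables and hence be representable. Iterating the reduction $j \mapsto j-1$ finitely many times produces the desired counterexample with irredundancy index $1$.
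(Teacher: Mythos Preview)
Your approach differs from the paper's, and it has a genuine gap in the kernel argument.

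The paper does not pass to the quotients $C/P_1$ and $C/(P_2\cap\cdots\cap P_j)$. Instead it picks $s\in P_1\setminus J$ and \emph{localizes}: the conclusion of Proposition~\ref{existsn3} says precisely that no power of $s$ annihilates a nonzero element of $A$ (since $s\notin J$ implies $s^k\notin J$ for all $k$, the $P_i$ being prime), so $A\hookrightarrow A[s^{-1}]$. One then checks that in $C[s^{-1}]$ the prime $P_1$ blows up to the unit ideal while $P_2[s^{-1}],\dots,P_j[s^{-1}]$ survive as the minimal primes, so the irredundancy index of the nilradical of $C[s^{-1}]$ is $j-1$, and induction applies.

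Your quotient approach breaks down at the injectivity step. The kernel of $A\to (A/PA)\oplus(A/P'A)$ is $PA\cap P'A$, and nothing you have written forces this to vanish. Your proposed fix --- invoking the chain $\mathcal I_{k,cc'}$ with $c\in P\setminus J$, $c'\in P'\setminus J$ --- cannot work, because $cc'\in P\cap P'=J$ is \emph{nilpotent}: once $(cc')^{t+1}=0$ you get $\mathcal I_{t+1,cc'}=A$ and $C/(cc')^{t+1}C=C$, so the ``embedding'' $A\hookrightarrow (A/\mathcal I_{n,cc'})\oplus(A\otimes_C C/(cc')^nC)$ degenerates to $A\hookrightarrow 0\oplus A$ and gives no reduction whatsoever. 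The machinery of Lemmas~\ref{existsn}--\ref{existsn2} is designed for elements \emph{outside} $J$; it is inert for nilpotent elements. More concretely, already for $C=k[x,y]/(x^2y,xy^2)$ with $P_1=(x)$, $P_2=(y)$, one has $PA\cap P'A\supseteq JA=(xy)A\ne 0$ in general, and your argument provides no representable target into which this kernel injects.

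The moral is that Proposition~\ref{existsn3} is tailored to make \emph{localization} at elements outside $J$ injective, not to control intersections of the form $PA\cap P'A$; the paper exploits exactly this.
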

\begin{proof} The intersection $P_1 \cap \dots \cap
P_j$ of prime ideals clearly is irredundant. We claim that $j =1.$
Indeed if $j>1$ we can take $s \in P_1 \setminus J$. Localizing $A$
at $s$, we have $A$ embedded into $A[s^{-1}] \oplus (A \otimes _C
(C/s^t C)$. $ A \otimes _C (C/s^t C)$ is representable by Noetherian
induction. Thus, it suffices to show that $A[s^{-1}]$ is
representable. On the other hand, the kernel of the natural map $A
\to A[s^{-1}]$ is $\Ann (s^t)$, implying $C[s^{-1}]/P_i[s^{-1}]
\cong (C /P_i)[s^{-1}]$ for each $i \ge 2$ (for if $s^k (c +P_i) =
0$ then $s^k c \in P_i$ and thus $c \in P_i$). Likewise
$P_i[s^{-1}]$ is a prime ideal of~$C[s^{-1}]$, since if $c'c'' \in
P_i[s^{-1}]$ then some $s^k c'c'' \in P_i,$ implying $ c'c'' \in
P_i$, so $c' \in P_i$ or $c' \in P_i$.

By Lemma~\ref{existsn}, if $c \in P_i$ for $2 \le i \le j$ then $s^n
c \in J$, so $c \in J[s^{-1}].$ On the other hand, if $s^{-k}c$ is
nilpotent then $s^n c = 0,$ implying $c \in P_i$ for $2 \le i \le
j$, and we conclude that $J[s^{-1}]$ is the nilradical of
$C[s^{-1}]$, which has lower irredundancy index. So we are done by
induction on $j$ once we manage to prove the case $j=1$.
\end{proof}

\subsubsection{Reduction to $C$ local}$ $

\begin{prop}\label{existsn4} If there is a counterexample $A$ to
Theorem \ref{4.66}, then there is a counterexample $\bar A$ for
which $C$ is local and all elements of $J$ making elements of $A$
torsion lie in $J$.
\end{prop}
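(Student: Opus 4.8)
The plan is to mimic the pattern of Lemma~\ref{existsn2} and Corollary~\ref{existsn47}: start with a Specht minimal counterexample $A$ over $C$, and argue that if $C$ is not local we can split $A$ into a direct sum of two pieces, each of which is representable by one of the available inductions, contradicting minimality. By Proposition~\ref{existsn3} we may already assume that every element of $C$ that makes some element of $A$ torsion lies in $J$, and by Corollary~\ref{existsn47} we may assume the irredundancy index of $J$ is $1$, i.e.\ $J = P$ is itself prime. So the nilradical $J$ of $C$ is a prime ideal, hence $C/J$ is a domain and the elements of $C \setminus J$ form a multiplicatively closed set of non-zero-divisors modulo $J$.

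First I would pick a maximal ideal $\mathfrak m$ of $C$ containing $J$ (such exists since $J$ is nilpotent, hence proper) and choose $s \in C \setminus \mathfrak m$; then $s \notin J$, so $s$ is not nilpotent. Following the now-standard device, embed
$$A \hookrightarrow A[s^{-1}] \oplus \bigl(A \otimes_C (C/s^t C)\bigr),$$
where the kernel of $A \to A[s^{-1}]$ is $\operatorname{Ann}_A(s^k)$ for $k$ large; by the reduction already in force (torsion elements killed only by elements of $J$, and $s \notin J$) this annihilator is $0$, so the map $A \to A[s^{-1}]$ is injective and the displayed embedding is genuine. The second summand $A \otimes_C (C/s^t C)$ is a relatively free affine algebra over the Noetherian ring $C/s^t C$, which is a proper quotient of $C$, so it is weakly representable by Noetherian induction on $C$. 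Hence it suffices to show $A[s^{-1}]$ is weakly representable, and since $A[s^{-1}]$ is a relatively free affine algebra over $C[s^{-1}]$, it is enough to know the statement for $C$ replaced by the localization $C[s^{-1}]$. Iterating this over a suitable finite set of elements $s$ (or passing directly to $C_{\mathfrak m}$ and checking that localization preserves relative freeness, affineness and the Noetherian property), we reduce to the case where $C$ is local with maximal ideal $\mathfrak m$; and since $J$ is the nilradical it is contained in $\mathfrak m$, while the hypothesis forces all torsion-inducing elements into $J \subseteq \mathfrak m$, which is exactly the asserted conclusion once one notes that in the local ring the residue field is $C/\mathfrak m$ and $J$ remains the nilradical. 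Finally, because weak representability is preserved under the embedding into a direct sum of two weakly representable algebras (noted at the start of \S\ref{tors}'s ambient discussion), representability of both summands gives representability of $A$, the desired contradiction.

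The main obstacle I expect is not the splitting argument itself but the bookkeeping of the localization step: one must verify that after inverting $s$ one still has a relatively free \emph{affine} algebra over a commutative \emph{Noetherian} ring with nilradical of irredundancy index $1$ and with no extraneous torsion, so that the induction hypotheses genuinely apply to $A[s^{-1}]$ over $C[s^{-1}]$; in particular one should check that $C[s^{-1}]$ does not acquire new maximal ideals that would prevent termination, which is why it is cleanest to localize once and for all at a fixed maximal ideal $\mathfrak m \supseteq J$ rather than to invert elements one at a time. A secondary subtlety is ensuring the finiteness of the localization process (equivalently, that only finitely many inversions are needed, or that $C_{\mathfrak m}$ is again Noetherian — which it is, being a localization of a Noetherian ring), so that the triple induction (Specht on $A$, Noetherian on $C$, and on the irredundancy index) is well-founded at this stage.
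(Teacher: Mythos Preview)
Your approach is correct in spirit but takes a detour that the paper avoids entirely. You correctly reduce (via Proposition~\ref{existsn3} and Corollary~\ref{existsn47}) to the situation where $J$ is prime and all $C$-torsion of $A$ comes from $J$, and you correctly observe that for $s\notin J$ the map $A\to A[s^{-1}]$ is already injective. But once you have that injectivity, the direct-sum splitting $A\hookrightarrow A[s^{-1}]\oplus(A\otimes_C C/s^tC)$ and the Noetherian induction on the second summand are superfluous: representability of $A[s^{-1}]$ alone forces representability of $A$. The paper exploits this directly: it localizes in a single step at the multiplicative set $C\setminus J$, obtaining $A\hookrightarrow A_J$ over the local ring $C_J$, and is done. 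There is no iteration, no termination issue, and no need to choose an auxiliary maximal ideal.

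Your choice to localize at a maximal ideal $\mathfrak m\supseteq J$ rather than at $J$ itself is also a (harmless for the literal statement, but real) weakening. Localizing at $J$ makes $J$ the maximal ideal of the resulting local ring, so that $F=C/J$ is a field; this is precisely what the paper uses in the next subsection. Localizing at your $\mathfrak m$ gives a local ring whose nilradical $J$ need not be maximal, so $C/J$ is only a local domain, and the subsequent reduction to the field case would require an extra step. In short: your argument works, but the paper's one-line ``invert everything outside the prime $J$'' is both simpler and strictly stronger, and it dissolves the bookkeeping obstacles you flagged.
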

\begin{proof} Take $A$ a Specht
minimal counterexample. By Corollary~\ref{existsn47} we may assume
that the irredundancy index of $J$ equals 1. Localizing by all
elements of $C\setminus J$ we may assume that $J$ is a maximal ideal
of $C$, i.e., $C$ is local.
\end{proof}

\subsection{Further reduction for $C$ Noetherian}$ $

Next, we consider the general case that $C$ is Noetherian; in view
of the previous discussion we may assume that $C$ is a local
Noetherian domain. If $J = 0$ then by Proposition~\ref{existsn4} we
may assume $A$ is torsion free over $C$. Localizing, we embed $A$
into a relatively free algebra over the field of fractions of $C$,
so we could assume that $C$ is a field, which is discussed further
in \S\ref{fieldcase}. Hence we assume that $J \ne 0$, i.e., $t \ge
1$.

There exists $s\in J^{t}$ for which $sA \ne 0,$ since otherwise $J^t
A = 0,$ implying $A$ is a $C/J^{t}$-algebra, and we are done by
Noetherian induction. Take $s\in J^{t}$ for which $sA \ne 0$. $A/
\Ann s$ is an algebra over $F $, since $J \subseteq \Ann s$. But $A/
\Ann s \cong sA$ as modules.

\subsection{Conclusion of the proof via the field case}\label{fieldcase}$ $

We conclude the proof by one last application  of hiking. Take some
polynomial $f \in s A \setminus \{ 0 \}$. The idea is to find a
hiked polynomial in the T-ideal of~$f$, with which we can then apply
Shirshov's theorem to utilize results from integrality.


The module $sA \ne 0$ is a module over $F = A/J$ since $sJ = 0.$
Thus, we can use the theory of hiking on $sA$. Take a nonzero
polynomial $f\in sA$, and let $M$ be the set of hiked polynomials
from~$f$, obtained via Theorem~\ref{hikthm7}. $M$ is a module since
hiking involves a series of four stages of substitutions, and
multiplying a hiked polynomial by another polynomial yields a hiked
polynomial. We take some hiked polynomial $0 \ne g \in M.$ As in
Lemma~\ref{Ahatisfinite}, we can use Shirshov's theorem to adjoin
finitely many elements to $\mathcal C$ to obtain a commutative
ring~${\mathcal C}$ for which
 ${\mathcal C} \otimes gA$ is finite over ${\mathcal C}$ and thus over $\mathcal C$; hence it is representable
by Anan'in's Theorem \cite{An}.

But $gA$ contains a critical nonidentity, which we can then hike to
a critical nonidentity $h$.
 Viewing $hA \subset M_{m}(C')$ we define $\xi_i(a)$ to be the $i$-characteristic
coefficient, i.e., $$a^m = \sum _{i=0}^{m-1} \xi_i(a) a^{m-i} .$$ We
use the module action of \S~\ref{stillfield} in the general,
nonhomogeneous case. Let $\psi_i$ be the substitution operator for
hiked polynomials, and impose the relations
$$(\psi_i(a) - \xi_i(a))h = 0$$ to get a canonical homomorphism $\varphi: A \to \hat A.$
Its kernel intersects $hA$ trivially. This induces a map
 canonical map
$A \to \hat A \oplus (A/hA)$, which is an injection, by
Lemma~\ref{Phoenix}, proving that $A$ is representable.

\section{Appendix: Further applications of hiking, for other categories}\label{invo}

Specht's Conjecture and representability of a T-ideal $\CI$ may be handled in   some other categories  of algebras,   mutatis mutandis, since hiking is a formal process. In this brief appendix we show how to modify the proof for algebras with involution, and indicate how it could also work for other categories. We start by noting that the reduction to  algebras over arbitrary Noetherian goes through as in Section~\ref{467}, which is module-theoretic over the associative commutative base ring. So the issue is for algebras over a field.
A  $C$-algebra in some category is called  \textbf{representable} if there is a 1:1 morphism to
  finite dimensional $K$-algebra  in the category,
 for a suitable field $K$.

\subsection{Algebras with Involution}$ $

An \textbf{involution} of an algebra is an anti-automorphism $(*)$ of order $\le 2$. Involutions occur throughout algebra, in the theory of group algebras and Lie algebras, and more generally, Hopf algebras; matrix algebras with involution play a crucial role
in defining the classical Lie algebras. One develops the theory in terms of $(*)$ in the category, as in \cite[\S 2.13]{Row1.5} and \cite{Row0}. A \textbf{$(*)$-ideal} is an ideal $A$ such that $A^* =A.$ An algebra is $(*)$-\textbf{simple} if it has no proper nonzero $(*)$-ideals. $(C\{x\},*)$ denotes the free associative algebra with involution  in the indeterminates
$x_0,x_0^*, x_1, x_1^*,\dots$, where $(*)$ acts in the obvious way:
$$(x_i^*)^* = x_i; \qquad (x_{i_1}\cdots x_{i_t})^* = x_{i_t}^* \cdots x_{i_1}^*.$$
Its elements $f(x_1,x_1^*, x_2,x_2^*, \dots x_m,x_m^*)$ are called \textbf{$(*)$-polynomials}, and are written here as $f(x_1, x_2, \dots, a_m)$.
$f(a_1,\dots, a_m)$ is the specialization of $f$, under substituting  $x_i\mapsto a_i$ and $x_i^*\mapsto a_i^*.$ For an $F$-algebra with involution $(A,*)$, $f(A,*)$ denotes $\{f(a_,\dots, a_m) : a_i \in A\}.$ We say that $f$ is a \textbf{(*)-identity} of $(A,*)$ if $f(A,*)=0$, and  $(A,*)$ is a $(*)$-\textbf{PI-algebra} if  $(A,*)$  has a nonzero $(*)$-identity. A crucial theorem of Amitsur~\cite{Am} is that every $(*)$-{PI-algebra} is a PI-algebra.

There are three \textbf{standard involutions} related to matrix algebras over a field $K$ of characteristic $\ne 2$.
\begin{enumerate}  \item (Exchange type) $(A,*)=(M_n(K )\oplus M_n(K ) ^{\operatorname{op}},\circ)$, where $(\circ)$ is the exchange involution $(a_1,a_2)^\circ = (a_2,a_1)$. $d^+ = d^- = n^2.$
\item (Orthogonal type) $(A,*)=(M_n(K ),t)$, where $(t)$ is the transpose;  $d^+ = \frac {  n(n+1)}2$ and $ d^- = \frac {  n(n-1)}2.$
\item (Symplectic type) $(A,*)=(M_{2n}(K ),s)$ where $(s)$ is
 $e_{i,j}$:
 $$e_{i,j}^* = e_{j+n,i+n}, \quad e_{i,j+n}^* = -e_{j,i+n},  \quad e_{i+n,j}^* = -e_{j+n,i}
, \quad \forall 1 \le i,j\le n.$$   $d^- = \frac {  n(n+1)}2$ and $ d^+ = \frac {  n(n-1)}2.$
\end{enumerate}

\begin{lem}\label{77}
Any $(*)$-simple algebra over an algebraically closed field can be put into one of these three forms.
\end{lem}
 A  $C$-algebra with involution
$(A,*)$ is called $(*)$-\textbf{representable} if it is embeddable as a
$C$-subalgebra with involution  of a finite dimensional $K$-algebra with involution  $(W,*)$,
 for a suitable  field $K$.  In this case, by means of the regular representation,
one can embed $A$ into some $(M_n(K),*)$ and, tensoring by the algebraic closure of the fixed subfield
of~$K$, assume that  $W = (M_n(K)\oplus M_n(K)^{\circ},\circ)$  or  $W = (M_n(K),*)$ where $(*)$ is a standard involution.

A $(*)$-\textbf{T-ideal} is a T-ideal which is also invariant under $(*)$.

\subsubsection{ACC on $(*)$-{T-ideals} in arbitrary characteristic}$ $

Sviridova \cite{Sviridova} proved the ACC on $(*)$-{T-ideals} (Specht's problem) over a field of characteristic 0; for affine algebras, the key step for affine $(*)$-PI-algebras is
the analog of \cite{Kem11} proved in \cite{Sviridova0} that any affine $(*)$-PI-algebra satisfies precisely the same $(*)$-identities as some finite dimensional $(*)$-algebra, which is essentially the same as the $(*)$-representability of relatively free affine $(*)$-algebras.

For a field $F$  of characteristic $p>0,$ we want to use hiking to obtain these theorems, much as in  \cite{BRV6} and in the main text of this paper. Although the argument has not been published, the solution to Specht's problem was described in \cite{Row2}, which we follow here in developing hiking of $(*)$-polynomials.

Since the   strategy outlined in \cite[Remark~2.3]{BRV6} relied on full quivers on the Zariski closure in a representation of $(A,*),$ we  need the analog for algebras with involution. A (*)-T-ideal $\CI$ is \textbf{representable} if $(C\{x\}/CI,*)$ is a $(*)$-representable algebra.

\begin{rem}\label{prog1}
  The program to prove Specht's conjecture and representability of a (*)-T-ideal $\CI$.
  \begin{enumerate}
  \item
In view of Amitsur's theorem, $\CI$ contains a (*)-T-ideal $\CI_0$ that is representable, so we can work in the  $(*)$-representable algebra $(C\{x\}/CI_0,*)$, which we embed into matrices with over an algebraically closed field $K$,  with standard involution of
transpose or symplectic type. (Exchange type is easily reduced to the non-involutory case by projecting to each component.)
In other words, we may assume that $(A,*) \subseteq (M_n(K),*)$

  \item  The Zariski closure is closed with respect to the involution $(*)$ of  $W$. (Proof: Applying $(*)$ to each
   polynomial relation for $a$ yields a polynomial relation for $a^*$.) Replacing  $(A,*)$ by its Zariski closure,
   we may assume that $(A,*)$ is Zariski closed.

  \item The Jacobson radical of the Zariski closure is a nilpotent $(*)$-ideal.
  \item Any Zariski closed $(*)$-algebra $(A,*)$ can be decomposed into $(S,*)\oplus J,$ where $ (S,*)\cong (A/J,*) $ is a direct product of $(*)$-simple algebras $(S_i,*)$, each of the form of Lemma~\ref{77}. (Indeed, $S$ is the direct product of matrix algebras over fields, since its center is Zariski closed and one mimics the
      classical proof of Wedderburn's theorem in \cite[Theorem 2.5.37]{Row1.5}, as first done by E.~Taft, noting that symmetric idempotents (resp.~antisymmetric) idempotents of $A/J$ lift to  symmetric idempotents (resp.~antisymmetric) idempotents of $A$.
  \item There is a natural $(*)$-version of the Wedderburn block form, and thus a quiver.
 \item If a matrix $a$ is semisimple,
  then so is $a^*$, so quasi-linearization enables us to reduce to semisimple and radical substitutions.
  \item The issues of $(*)$-hiking are the same as noninvolutory  hiking, since one can insert Capelli polynomials without $(*)$.
  \item The $(*)$ version of \Tref{hikthm} is proved in exactly the same way.
    \item One can define characteristic coefficients using either matrices or  polynomials. This is a bit tricky since, for $(*)$ of symplectic type,
    the pfaffian \cite[Theorem~2.5.10]{Row0} takes the place of the characteristic polynomial for symmetric elements, so the degree is $\frac{n}{2}.$
      \item Adjoin the characteristic coefficients using polynomials to get an integral extension, noting that the arguments in the text only used properties of modules over commutative rings.
     \item  Apply Shirshov's theorem to get a $(*)$-algebra finite over a $(*)$-fixed commutative Noetherian algebra, which is $(*)$-representable by the $(*)$-analog of \cite{RowenSmall}.
\end{enumerate}
\end{rem}

\begin{thm}
\end{thm}
\begin{proof}
  In the program of \cite{BRV6}, go through the steps of Remark~2.3, where all the work was done, since hiking behaves in the same way.
Hence  the ACC for $(*)$-{T-ideals} holds over an arbitrary field of characteristic $p>0,$ and in light of Sviridova's theorem, over any field,
and thus over any commutative Noetherian ring.
\end{proof}

\subsubsection{ $(*)$-representability of relatively free affine $(*)$-algebras over a field}$ $

\begin{thm} Any relatively free affine $(*)$-PI algebra over a commutative Noetherian ring is $(*)$-representable.
\end{thm}
\begin{proof}
  Using Specht's $(*)$-Conjecture, we can take a maximal non-$(*)$-representable T-ideal.
Then we repeat the proof of Theorem~\ref{4.66}, using \Rref{prog1} where appropriate.
\end{proof}

\subsection{Other categories}$ $

Some other categories of algebras are amenable to the program in \Rref{prog1}. We need a category for which Specht's problem was solved in characteristic 0,
and for which the simple objects over an algebraically field are easily characterized in terms of their identities, and have matrix-like descriptions in which one can take the Zariski closure and define hiking.

\subsubsection{Alternative algebras}$ $

Alternative algebras have a similar structure theory to associative algebras, in part because 2-generated alternative algebras are associative, by Artin's theorem.
(Of course one takes T-ideals in the free alternative algebra.) Thus any alternative PI-algebra satisfies a 2-generated identity, and \Rref{CHrad} is applicable. Iltyakov \cite{Ilt91} solved Specht's problem for affine alternative algebras of characteristic 0.
Shafer \cite{Schafer} proved the Wedderburn principle theorem, and the only split simple alternative algebras are the split  algebra of octonians (which are algebraic of degree 2 and satisfy the same 2-generated identities of $M_2(F)$)
and the usual associative matrix algebras. Thus one can define a Zariski closure, and  the Wedderburn block form, thereby providing   hiking.

One proves the ACC on T-ideals over an arbitrary field by the induction procedure given in \cite[Definition 7.1 and Lemma 7.2]{BRV6}, and
then the   $(*)$-representability of relatively free affine alternative $(*)$-algebras over an arbitrary field, as in the main text.

\subsubsection{Group-graded algebras}$ $

 Specht's problem was solved for affine algebras of characteristic 0  graded by a finite group,
 by Aljadeff and Belov in \cite{AB}. The gradings in matrix algebras are described explicitly in \cite{BZ}. In characteristic 0, the Jacobson radical often is graded \cite{CM}, but the situation is messier in nonzero characteristic.
  Karasik \cite{Karasik} developed the necessary structure theory in terms of $G$-simple PI-algebras, but their structure seems to be quite complicated, so some details need to be worked out in the Wedderburn block form.

  \subsubsection{Jordan algebras}$ $

The Wedderburn decomposition $S \oplus J$ into split simple Jordan algebras and the radical was discovered by Albert \cite{Albert} for special Jordan
algebras, and by Penico \cite{Penico} in general over a field of characteristic $\ne 2$. One also knows the split simple Jordan algebras, characterized in terms
of their polynomial identities, so
we have  the Zariski closure and Wedderburn block form, and thus can  perform hiking. Vais and Zelmanov~\cite{VZ} proved Kemer's conjecture in characteristic 0, but representability remains open.

\end{document}